\def\Lx{L_x}
\def\betanplus{\beta_n^+}
\def\betanminus{\beta_n^-}
\def\betanplusminus{\beta_n^\pm}
\newcommand\restr[2]{{
  \left.\kern-\nulldelimiterspace 
  #1 
  \vphantom{\big|} 
  \right|_{#2} 
  }}
\newcommand{\ou}{%
  \mathrel{%
    \vcenter{\offinterlineskip
      \ialign{##\cr$L+1$\cr\noalign{\kern-1.5pt}$1$\cr}%
    }%
  }%
}
\newcommand\restrict[3]{{
  \left.\kern-\nulldelimiterspace 
  #1 
  \vphantom{\big|} 
  \right|^{#3}_{#2} 
  }}
\newcommand{\normblank}[2]{\left\Vert #1\right\Vert_{#2}}
\newcommand{\normH}[2]{\left\Vert #1\right\Vert_{H^{1}(#2)}}
\newcommand{\normL}[2]{\left\Vert #1\right\Vert_{L^{2}(#2)}}
\newcommand{\normLp}[3]{\lVert #1\rVert_{L^{#3}(#2)}}
\newcommand{\normHs}[2]{\lVert #1\rVert_{H^{1}(#2)}^{2}}
\newcommand{\normLs}[2]{\left\Vert #1\right\Vert_{L^{2}(#2)}^{2}}
\newcommand{\normLinf}[2]{\left\Vert #1\right\Vert_{L^{\infty}(#2)}}
\newcommand{\normHonesone}[2]{\left\Vert #1\right\Vert_{H^{1+s_{1}}(#2)}}
\DeclareMathOperator*{\essinf}{ess\,inf}
\begin{document}

\title{Analysis of the Rigorous Coupled Wave Approach for $p$-polarized light in gratings\thanks{Supported by the US National Science Foundation (NSF) under grant numbers DMS-1619904 and DMS-1619901.}}
%
%

\author{Benjamin J. Civiletti\inst{1}, Akhlesh Lakhtakia
\inst{2} \and
Peter B. Monk\inst{1} }
\authorrunning{B.J. Civiletti et al.}
%

\institute{Department of Mathematical Sciences, University of Delaware, Newark, DE, United States
of America \and  Department of
Engineering Science and Mechanics, Pennsylvania State University, 
University Park, PA, United States
of America}

\maketitle              
\begin{abstract}
We study the convergence properties of the two-dimensional Rigorous Coupled Wave Approach (RCWA)   for 
$p$-polarized monochromatic incident light. The RCWA is a semi-analytical numerical method that is widely used to solve  the boundary-value problem of scattering  by a grating. The approach requires the expansion of all  electromagnetic field phasors and the relative permittivity as Fourier series in the spatial variable
along the direction of the  periodicity of the grating. In the direction perpendicular to the grating periodicity, the domain is discretized into thin slices and the actual relative permittivity is replaced by an approximation. The approximate relative permittivity is chosen so that the solution of the Maxwell equations in each slice can be computed without further approximation. Thus, there is error due to the
approximate relative permittivity as well as the trucation of the Fourier series. We show that the RCWA  embodies a Galerkin scheme for a perturbed problem, and then we use tools from the Finite Element Method to show that the method converges with increasing number of retained Fourier modes and finer approximations of the relative permittivity.  Numerical examples illustrate our analysis, and suggest further work.

\keywords{RCWA  \and convergence \and variational methods \and grating}
\end{abstract}
\section{Introduction}

The Rigorous Coupled Wave Approach (RCWA) is a popular numerical method for solving electromagnetic  scattering problems involving periodic structures \cite{Maystre,Popov,Gaylord,Faryad,Akhlesh2}. Floquet theory \cite{Yakubovich} shows that the true solutions to these problems are quasi-periodic in the same direction as the periodicity of the grating and can be represented using Fourier series in that same direction. The RCWA exploits this fact by expanding the electric and magnetic field phasors and the constitutive parameters such as
the relative permittivity as Fourier series along the direction of periodicity. After substitution of these representations into Maxwell's equations and truncation of the Fourier series for computational tractability, a system of first-order ordinary differential equations (ODEs) is obtained \cite{Faryad,Akhlesh2}.  We shall prove the convergence of the  RCWA applied to a grating that is translationally invariant in one direction.

The case we study in this paper is the one in which the domain is illuminated by $p$-polarized monochromatic incident light. Then the approach is equivalent to solving a system of second-order ODEs relating the Fourier modes of the magnetic field phasor \cite{Li}. However, even this truncated system is difficult to solve, so the true relative permittivity is replaced by an approximation whereby the domain is discretized into thin slices in the direction perpendicular to the periodicity of the grating. In each slice, the true relative permittivity is approximated so that the solution in each slice can be computed exactly. A fast linear-algebraic algorithm can then be derived for this problem, by enforcing the continuity of the Fourier modes and its derivative across the inter-slice boundaries \cite{Gaylord3,Lalanne1996,Akhlesh2}. The approximate solution in the entire domain is then formed by stitching together the solutions for the slices.

The basic idea of the RCWA  derives from coupled-wave analysis for diffraction problems \cite{coupled}. It was proposed in its current form in \cite{Gaylord}. 
The convergence of the field phasors in the vicinity of the grating in the $p$-polarization case was improved greatly by Li \cite{Li} by slightly reformulating the system of second-order ODEs. The RCWA  is now  standard  for rapidly computing the near field in grating problems \cite{Maystre,Popov}, and it is
particularly attractive for designing optimal thin-film solar cells \cite{Solcore,S4,Ahmad-CIGS,Ahmad-AlGaAs,Tom}.

Our contribution in this paper is to provide the first convergence proof for the RCWA   for $p$-polarized incident light. This proof involves extending well-posedness results  for standard scattering problems \cite{Graham} to scattering in a periodic medium, followed by a perturbation and variational analysis of the problem. Our analysis covers the case where the relative permittivity $\varepsilon$ is everywhere real and the device is non-trapping and also when $\varepsilon$ is everywhere complex, the latter case being more relevant for solar cells. This paper follows the general approach  of our analysis of
 $s$-polarized
incident light \cite{CivilettiMonk}, but   the proof for $p$-polarized incident light is more difficult since the field phasors vary less smoothly, and the perturbed coefficient appears in the principle part of the differential operator governing
the field phasors.

This paper is organized as follows. In Section~\ref{formulation}, we introduce the mathematical problem: an inhomogeneous Helmholtz equation with quasi-periodic boundary conditions. We  give the variational formulation for our problem in Section~\ref{variational}. In Section \ref{Rellich} we derive a Rellich identity for solutions of the Helmholtz problem, assuming that $\varepsilon$ is real and $C^{\infty}$ smooth. Then we show that an \textit{a-priori} estimate holds, where the continuity constant has explicit dependence on $\varepsilon$ as long as certain non-trapping conditions are fulfilled. These non-trapping conditions ensure, roughly speaking, that a quantum of electromagnetic energy entering the domain leaves it after a finite time. We extend these \textit{a-priori} estimates to more general $\varepsilon$ in Section \ref{Linf}, using generalized non-trapping conditions \cite{Graham}. In particular, we use these results to show that if $\varepsilon$ is piecewise smooth and satisfies the generalized non-trapping conditions, then the perturbed $\varepsilon$ problem also satisfies them. Convergence in slice thickness is shown in Section \ref{thickness} to follow from the foregoing conclusion. We further extend the \textit{a-priori} estimates obtained in Section \ref{norm} to demonstrate in Section~\ref{Galerkin} that the RCWA   embodies a Galerkin scheme. Convergence  in retained Fourier modes is shown in Section \ref{modes}. The case where $\varepsilon$ is everywhere complex is considered in Section \ref{absorbing}. Finally, we test our prediction of convergence order with some numerical examples in Section \ref{numerics} by comparing the RCWA solution with the solution delivered by a highly refined Finite Element Method.

\section{Formulation of the continuous problem}
\label{formulation}
We consider  linear optics with an $\exp(-i \omega t)$ dependence on time $t$, where $i=\sqrt{-1}$ and $\omega$ is the angular frequency of light. The grating is assumed to be invariant in the $x_{3}$-direction.
For  $p$-polarized light,
the electric field phasor is $\bm{E}= E_1{\bf e}_1 + E_2{\bf e}_2=(E_{1},E_{2},0)$ and the magnetic field phasor is $\bm{H}= H_3{\bf e}_3=(0,0,H_{3})$, where the unit vectors $\bm{e}_1=(1,0,0)$, $\bm{e}_2=(0,1,0)$, and
$\bm{e}_3=(0,0,1)$ \cite{Guenther}. All non-zero components of the two phasors depend on $x_{1}$ and $x_{2}$ but not on $x_3$. The spatially dependent relative permittivity is denoted by $\varepsilon(x_{1},x_{2})$ whereas the relative permeability is assumed to be unity everywhere. The wavenumber in air is denoted
by $\kappa=\omega\sqrt{\varepsilon_0\mu_0}$ and the intrinsic impedance of air by $\eta=\sqrt{\mu_0/\varepsilon_0}$, where $\varepsilon_0=8.854\times 10^{-12}$~F~m$^{-1}$
is the permittivity   and $\mu_0=4\pi\times10^{-7}$~H~m$^{-1}$ is the permeability of vacuum.

 Under the above assumptions, we notice that
\begin{equation}
\nabla \times \bm{H}=\bigg( \frac{\partial H_{3}}{\partial x_{2}},-\frac{\partial H_{3}}{\partial x_{1}},0\bigg).
\end{equation}
By virtue of the Amp\`ere--Maxwell equation $-i (\kappa/\eta) \varepsilon \bm{E}=\nabla \times \bm{H}$,  it follows that 
\begin{align}
-i (\kappa/\eta) \varepsilon E_{1}&=\frac{\partial H_{3}}{\partial x_{2}},
\label{AL-eq2}\\
 i (\kappa/\eta)\varepsilon E_{2}&=\frac{\partial H_{3}}{\partial x_{1}}.
 \label{AL-eq3}
\end{align}
From the Faraday equation $i \kappa\eta \bm{H}=\nabla \times \bm{E}$, we have
\begin{equation}
H_{3}=\frac{1}{i\kappa\eta}\bigg(\frac{\partial E_{2}}{\partial x_{1}}-\frac{\partial E_{1}}{\partial x_{2}} \bigg)\,.
\label{AL-eq4}
\end{equation}
Combining (\ref{AL-eq2})--(\ref{AL-eq4}), we finally obtain
\begin{equation}
H_{3}=-\frac{1}{\kappa^{2}} \bigg[ \nabla \cdot \bigg(\frac{1}{\varepsilon}\nabla H_{3} \bigg) \bigg].
\end{equation}
 We denote the total field $H_{3}$ by $u^{t}$ in the remainder of the paper to simplify notation.

We now present the standard mathematical formulation for the chosen scattering problem. The relative permittivity $\varepsilon(x_1,x_2)$ is assumed to be periodic in the $x_{1}$ direction with period $\Lx>0$. The
two-dimensional rectangular domain containing one period is denoted by 
$\Omega=\{\bm{x} \in \mathbb{R}^{2} , \ 0<x_{1}<\Lx, -H<x_{2}<H \}$.
The quantity $H>0$ is chosen large enough so that $\varepsilon(x_{1},x_{2})=\varepsilon_{+}$ for $x_{2}>H$ and $\varepsilon(x_{1},x_{2})=\varepsilon_{-}$ for $x_{2}<-H$, both  $\varepsilon_{+}$ and $\varepsilon_{-}$ being positive constants. 

A $p$-polarized   plane wave propagating in the half-space $x_2>H$
at an angle $\theta$ with respect to the $x_2$-axis is incident on the plane
$x_2=H$; the sole non-zero component of its magnetic field phasor is denoted by
\begin{equation}
\label{incident}
u^{\text{inc}}(x_{1},x_{2})=\exp \bigg[i \kappa \sqrt{\varepsilon_{+}}\big(x_{1}\sin \theta - x_{2} \cos \theta \big) \bigg]\,.
\end{equation}
The scattered magnetic field phasor $u\bm{e}_{3}$ is given in terms of the total field by $u=u^t-u^{\rm{}inc}$, where $u$ satisfies the Helmholtz equation
\begin{align}
    \label{eqn:PDE}
    \nabla \cdot \bigg(\frac{1}{\varepsilon}\nabla u\bigg)+\kappa^{2}u &=f \hspace{2.1cm} \text{in} \ \Omega,\\
     \label{eqn:PDE1}
    \exp(-i \alpha_0 \Lx)u(0,x_{2})&=u(\Lx,x_{2}) \hspace{1cm}  \forall\, x_{2},\\
    \exp(-i \alpha_0 \Lx)\frac{\partial }{\partial x_{2}}u(0,x_{2})&=\frac{\partial }{\partial x_{2}}u(\Lx,x_{2}) \hspace{0.3cm} \forall\, x_{2},
    \label{eqn:PDE2}
\end{align}
and
\begin{equation}
\alpha_0=\kappa  \sqrt{\varepsilon_{+}} \sin \theta\,.
\end{equation}
The source function in the above Helmholtz equation (\ref{eqn:PDE}) is 
\begin{equation}
\label{AL-eq10}
f=\nabla \cdot \big[(\varepsilon_{+}^{-1}-\varepsilon^{-1})\nabla u^{\text{inc}} \big].
\end{equation} In the first part of the paper, we assume that $\varepsilon \in C^{\infty}(\mathbb{R}^{2})$ so that $f \in L^{2}(\Omega)$.  However, we are interested in the case where $\varepsilon$ is only piecewise smooth so that   $f \notin L^{2}(\Omega)$ in general. We discuss the regularity of $f$ in more detail later in the paper.  In addition, we    note that $u$ is quasi-periodic in $\Omega$, accounting for the multiplicative factors in equations (\ref{eqn:PDE1}) and (\ref{eqn:PDE2}).

We also assume that $\varepsilon$ is piecewise $C^{2}$ in $\mathbb{R}^{2}$ and that either 
\begin{enumerate}
\item[I.] $\varepsilon$ is real and $\Re(\varepsilon)>0$, or
\item[II.] $\varepsilon$ is complex, $\Im(\varepsilon)>c_{1}>0$, and  $\Re(\varepsilon)>c_{2}>0$ in $\Omega$
and a positive real constant elsewhere.
\end{enumerate}
Case~I encompasses insulators whereas Case~II covers dissipative dielectric materials (but not metals).

To show convergence of the RCWA   in Case~I, we prove a Rellich identity for the problem and show that an \textit{a-priori} estimate holds when $\varepsilon \in C^{\infty}(\mathbb{R}^{2})$. Using a technique of Graham \textit{et al.} \cite{Graham}, we then show that several different \textit{a-priori} bounds hold for the chosen problem, even if $\varepsilon \in L^\infty(\Omega)$. We assume  certain non-trapping conditions in order to ensure that the continuity constants in the \textit{a-priori} estimates can be written explicitly in terms of $\kappa$ and $\varepsilon$. In Case~II, the problem is coercive and we employ Strang lemmas \cite{Previato}  to prove convergence; hence, non-trapping conditions are unnecessary.  

\subsection{Variational formulation}
\label{variational}
To prove convergence of the RCWA, we need to consider several different variational problems, because the approach replaces the true $\varepsilon$ with an approximation $\varepsilon_{h}$. To define this approximation, the domain $\Omega$ is decomposed into thin  slices stacked along the
$x_2$-axis,  using a mesh $-H=h_{0}<h_{1}< \cdots < h_{S}=H$ for some $S >0$. The slices are specified as
\begin{equation}
S_{j}=\{\bm{x}\in \mathbb{R}^{2} , \ 0<x_{1}<\Lx, h_{j-1}<x_{2}<h_{j} \}, \quad j\in[1,S].
\end{equation}
From this the slice thickness parameter  $h=\max_{j} (h_{j}-h_{j-1})$. In the
$j$-th slice $S_{j}$, the true relative permittivity $\varepsilon$ is sampled on the center line of the slice to yield
\begin{equation}
\varepsilon_{h}(x_{1},x_{2})=\varepsilon(x_{1},h_{j-\frac{1}{2}}),\quad \bm{x}\in S_{j},
\end{equation}
where $h_{j-\frac{1}{2}}=\frac{1}{2}(h_{j}+h_{j-1})$ for each $j$.  Defined thus piecewise  in $\Omega$,   $\varepsilon_{h}$ amounts to a stairstep approximation of $\varepsilon$.

Now we define the space $H_{qp}^{1}(\Omega)$ as the completion of the quasi-periodic smooth functions $C_{qp}^{\infty}(\Omega)$ in the $H^{1}(\Omega)$ norm. By virtue of the 
trace theorem \cite{BrennerScott}, the functions in $H_{qp}^{1}(\Omega)$ are also quasi-periodic and $H_{qp}^{1}(\Omega)$ is a closed subspace of $H^{1}(\Omega)$. 

Next, let us replace the source function $f$ defined in \eqref{AL-eq10} by a more general source function
denoted by $F$.
Slightly abusing notation, we now let $u \in H_{qp}^{1}(\Omega)$ be a solution of the Helmholtz problem \eqref{eqn:PDE}--\eqref{eqn:PDE2} for $ F \in (H_{qp}^{1}(\Omega))^{'}$, the dual space of $H_{qp}^{1}(\Omega)$. Multiplying \eqref{eqn:PDE} by a test function $v \in H_{qp}^{1}(\Omega)$ and using the divergence theorem in the usual way, we obtain
\begin{eqnarray}
&&\int_{\Omega} \bigg( \frac{1}{\varepsilon}\nabla u \cdot \nabla\overline{ v}
-\kappa^{2} u \overline{v} \bigg)+\int_{\Gamma_{-H}}\overline{v} \frac{1}{\varepsilon_{-}}\frac{\partial u}{\partial x_{2}}-\int_{\Gamma_{H}}\overline{v}\frac{1}{\varepsilon_{+}}\frac{\partial u}{\partial x_{2}}=-\int_{\Omega} F \overline{v}
\label{vari14}
\end{eqnarray}
for all $v \in H_{qp}^{1}(\Omega)$, where the overbar denotes complex conjugation.  As depicted in Fig.~\ref{Fig1},
$\Gamma_{H}=\{ \bm{x}\in \mathbb{R}^{2}, \ 0<x_{1}<\Lx, x_{2}=H\}$ is the top boundary 
and
 $\Gamma_{-H}=\{ \bm{x}\in \mathbb{R}^{2} , \ 0<x_{1}<\Lx, x_{2}=-H\}$ is the  bottom
boundary of $\Omega$. In deriving (\ref{vari14}), we used the fact that
the integral on the left boundary $\Gamma_{L}=\{\bm{x}\in \mathbb{R}^{2} , \ x_{1}=0, -H<x_{2}<H \}$
of $\Omega$ is canceled out by the corresponding integral on the right boundary $\Gamma_{R}=\{\bm{x}\in \mathbb{R}^{2} , \ x_{1}=\Lx, -H<x_{2}<H \}$ of $\Omega$, since $\overline{v}\frac{1}{\varepsilon}\frac{\partial u}{\partial x_{1}}$ is periodic with period $\Lx$ along the $x_1$ direction for every $v \in H_{qp}^{1}(\Omega)$. 

\begin{figure}[ht]
	\centering
	\includegraphics[width=2.5in,height=3.5in]{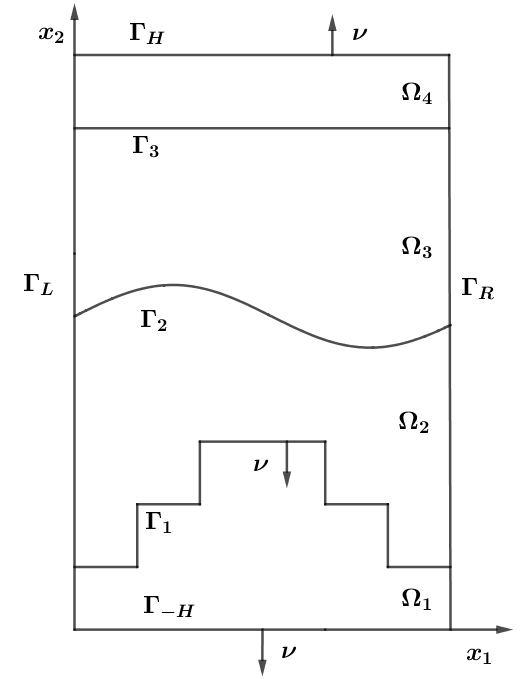}
	\caption{Geometry of the scattering problem, with $I=3$ interfaces. The domain $\Omega$ is enclosed 
	by the top boundary  $\Gamma_{H}=\{ \bm{x}\in \mathbb{R}^{2} ,\ 0<x_{1}<\Lx, x_{2}=H\}$, the right boundary
	$\Gamma_{R}=\{\bm{x}\in \mathbb{R}^{2} , \ x_{1}=\Lx, -H<x_{2}<H \}$, the bottom boundary
	$\Gamma_{-H}=\{ \bm{x}\in \mathbb{R}^{2} , \ 0<x_{1}<\Lx, x_{2}=-H\}$, and the left boundary
	$\Gamma_{L}=\{\bm{x}\in \mathbb{R}^{2} , \ x_{1}=0, -H<x_{2}<H \}$.  The relative permittivity ${\varepsilon}\in C^{2}$ in each $\Omega_{k}$, but it can jump over each interface $\Gamma_{k}$. The interface $\Gamma_{1}$ is a stairstep.  The normal vector $\nu$ is shown.}
	\label{Fig1}
\end{figure}

At the top and bottom boundaries, we use the Dirichlet-to-Neumann   operator defined as follows. If $\phi \in H^{1/2}_{qp}(\Gamma_{H})$, then
\begin{equation}
\phi(x_{1})=\sum_{n\in \mathbb{Z}}\phi_{n} \exp(i \alpha_{n} x_{1}),
\end{equation}
where 
\begin{equation}
\alpha_{n}=\alpha_0+n\left({2 \pi }/{\Lx}\right).
\end{equation}
  In the region $\Omega^{+}=\{\bm{x}\in \mathbb{R}^{2}, 0<x_{1}<\Lx, x_{2}>H \}$, let $v_{\phi} \in H_{qp,loc}^{1}(\Omega^{+})$ satisfy
\begin{align}
\Delta v_{\phi}+\kappa^{2}\varepsilon_{+} v_{\phi}&= 0 \hspace{0.8cm}  \text{in} \ \Omega^{+}, \\
v_{\phi}&=\phi \hspace{0.8cm} \text{on} \ \Gamma_{H},
\end{align}
together with the upward propagating radiation condition \cite{Monk}. Then the Rayleigh--Bloch expansion
\begin{equation}
\label{Rayleigh}
v_{\phi}(x_{1},x_{2})=\sum_{n \in \mathbb{Z}}\phi_{n}\exp[i(x_{2}-H)\betanplus]\exp(i \alpha_{n} x_{1})
\end{equation}
follows with 
\begin{equation}
\betanplusminus= \begin{cases} 
      \sqrt{\kappa^{2}\varepsilon_{\pm}-\alpha_{n}^{2}} &\quad \alpha_{n}^{2}<\kappa^{2}\varepsilon_{\pm}, \\[5pt]
      i\sqrt{\alpha_{n}^{2}-\kappa^{2}\varepsilon_{\pm}} &\quad \alpha_{n}^{2}> \kappa^{2}\varepsilon_{\pm}.
   \end{cases}
\end{equation}
Our choice of $\betanplus$ ensures that all the modes are either   outgoing waves that do
not decay as $x_2\to\infty$ or   evanescent waves that decay as $x_{2} \to   \infty$. We define the Dirichlet-to-Neumann operators $T^{\pm}$ on the top and bottom boundaries, respectively, as
\begin{equation}
(T^{\pm} \phi)(x_{1})=\pm \frac{1}{\varepsilon_{\pm}} \restr{\frac{\partial v_{\phi}}{\partial x_{2}}}{x_{2}=\pm H}=\frac{i}{\varepsilon_{\pm}}\sum_{n \in \mathbb{Z}}\phi_{n} \beta_{n}^{\pm} \exp(i \alpha_{n}x_{1}).
\end{equation}

The  normal derivatives in \eqref{vari14} can now be replaced by Dirichlet-to-Neumann operators. 
The resulting sesquilinear form $B_{\epsilon}(\cdot,\cdot): H_{qp}^{1}(\Omega)\times H_{qp}^{1}(\Omega) \to \mathbb{C}$ is 
\begin{equation}
\label{vari}
B_{\varepsilon}(w,v)=\int_{\Omega} \bigg( \frac{1}{\varepsilon}\nabla w \cdot \nabla\overline{ v}-\kappa^{2} w \overline{v} \bigg)-\int_{\Gamma_{-H}}\overline{v} T^{-}(w)-\int_{\Gamma_{H}}\overline{v}T^{+}(w),
\end{equation}
for all $w \in H_{qp}^{1}(\Omega)$ and $v \in H_{qp}^{1}(\Omega)$.
Given an $F \in (H_{qp}^{1}(\Omega))^{'}$, we seek a solution $u \in H_{qp}^{1}(\Omega)$ such that
\begin{equation}
\label{p1}
B_{\epsilon}(u,v)=F(v)
\end{equation}
for all $v \in H_{qp}^{1}(\Omega)$, where $F(v)=-\int_{\Omega}F\overline{v}$. 

We are also interested in a perturbed problem in which $\varepsilon$ is replaced by $\varepsilon_{h}$. Therefore, we define $B_{\varepsilon_{h}}(\cdot,\cdot)$ to be the same as $B_{\varepsilon}(\cdot,\cdot)$ but with $\varepsilon_{h}$ instead of $\varepsilon$. Given an $F \in (H_{qp}^{1}(\Omega))^{'}$, we seek a solution $u^{h} \in H_{qp}^{1}(\Omega)$ such that
\begin{equation}
\label{p2}
B_{\epsilon_{h}}(u^{h},v)=F(v)
\end{equation}
for all $v \in H_{qp}^{1}(\Omega)$. 

To show that both of the foregoing problems have unique solutions, we show that either a Rellich identity holds for our problem and implies an \textit{a-priori} estimate, or the problem is coercive depending on our assumptions on $\varepsilon$. 

We now recall some properties of the Dirichlet-to-Neumann boundary integrals appearing in the sesquilinear form \eqref{vari}. The signs of the real and imaginary parts of the Dirichlet-to-Neumann  integral on ${\Gamma_{H}}$ are known because
\begin{equation}
\label{Dtn}
\left. \begin{array}{l}
\Re \int_{\Gamma_{H}}\varepsilon_{+}\overline{u} T^{+}(u)=
\displaystyle{-\sum_{\alpha_{n}^{2}>\kappa^{2}\varepsilon_{+}}\sqrt{\alpha_{n}^{2}-\kappa^{2}\varepsilon_{+}}\big| u_{n}(H)\big|^{2}}\\
[14pt]
\Im \int_{\Gamma_{H}}\varepsilon_{+}\overline{u} T^{+}(u)=\displaystyle{\sum_{\alpha_{n}^{2}<\kappa^{2}\varepsilon_{+}}\sqrt{\kappa^{2}\varepsilon_{+}-\alpha_{n}^{2}}\big| u_{n}(H)\big|^{2}}
\end{array}\right\}\,.
\end{equation}
The same signs also hold for the real and imaginary parts of the Dirichlet-to-Neumann integral on ${\Gamma_{-H}}$. These facts are used many times throughout this paper. Also, we have chosen to avoid Rayleigh--Wood anomalies \cite{Maystre,Palmer} by assuming that $\alpha_{n} \neq \kappa \sqrt{\varepsilon_{+}}$ and $\alpha_{n} \neq \kappa \sqrt{\varepsilon_{-}}$
 for any $n$.

In some arguments throughout this paper, it is useful to use the total magnetic field $u^{t}=u+u^{\text{inc}}$, instead of the scattered field $u$. Therefore, we conclude this section by giving the variational problem for the total magnetic field. We seek a $u^{t} \in H^{1}_{qp}(\Omega)$ such that
\begin{equation}
\label{tvari}
B_{\varepsilon}(u^{t},v)=\int_{\Gamma_{H}}\overline{v} \bigg(\frac{1}{\varepsilon_{+}}\frac{\partial u^{\text{inc}}}{\partial x_{2}}-T^{+}(u^{\text{inc}}) \bigg)
\end{equation} 
for all $v \in H^{1}_{qp}(\Omega)$.
\section{A Rellich identity for quasi-periodic solutions}
\label{Rellich}
In this section, we apply techniques developed by Lechleiter \& Ritterbusch \cite{Lechleiter} for scattering by an arbitrarily rough surface
to our quasi-periodic case. We assume that $\varepsilon \in C^{\infty}(\mathbb{R}^{2})$ here, and later on we will show that similar \textit{a-priori} estimates hold even when $\varepsilon \in L^{\infty}(\mathbb{R}^{2})$. These estimates will be used to address Case~I of Section~\ref{formulation}.

\begin{theorem}
\label{rellich}
Assume that $\varepsilon \in C^{\infty}(\mathbb{R}^{2})$ is real and $\Re (\varepsilon) >0$. If $u \in H_{qp}^{1}(\Omega)$ is a solution to the variational problem \eqref{p1} for a source $F \in L^{2}(\Omega)$, then the following Rellich identity holds:
\begin{align}
\nonumber
&\int_{\Omega}\bigg[\frac{2}{\varepsilon}\bigg|\frac{\partial u}{\partial x_{2}} \bigg|^{2} -(x_{2}+H)\frac{\partial}{\partial x_{2}}\bigg(\frac{1}{\varepsilon}\bigg)\big|\nabla u \big|^{2}\bigg]+2H\int_{\Gamma_{H}}\bigg(\frac{-2}{\varepsilon}\bigg|\frac{\partial u}{\partial x_{2}} \bigg|^{2}+\frac{1}{\varepsilon}\big|\nabla u \big|^{2}-\kappa^{2}\big|u \big|^{2} \bigg)\\
\nonumber
&-\int_{\Gamma_{H}}\overline{u}T^{+}(u)-\int_{\Gamma_{-H}}\overline{u}T^{-}(u)\\
&=-2\int_{\Omega}(x_{2}+H)\Re \bigg(\overline{F} \frac{\partial u}{\partial x_{2}} \bigg)-\int_{\Omega}F \overline{u}.
\label{AL-eq24}
\end{align}
\end{theorem}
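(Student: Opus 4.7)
The plan is to derive the identity by combining a Rellich-type multiplier calculation on the strong form of the PDE with the variational identity tested against $u$ itself. The key observation is that the multiplier $2(x_{2}+H)\overline{\partial u/\partial x_2}$ vanishes on $\Gamma_{-H}$ (because $x_2+H=0$ there) and that it is quasi-periodic in $x_1$ whenever $u$ is; so all unwanted boundary contributions either collapse or cancel, while the coefficient $x_2+H$ produces the clean $2H$ weights on $\Gamma_H$ that appear in the statement.

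First, I would verify by elliptic regularity that $u\in H^{2}_{\text{loc}}(\Omega)$: since $\varepsilon\in C^{\infty}(\mathbb{R}^{2})$ with $\varepsilon>0$ and $F\in L^{2}(\Omega)$, standard interior and boundary regularity (the boundary conditions on $\Gamma_H,\Gamma_{-H}$ being of DtN type, on $\Gamma_L,\Gamma_R$ quasi-periodic) imply that $u\in H^{2}(\Omega)$, so that $(x_2+H)\,\partial u/\partial x_2$ lies in $H^{1}_{qp}(\Omega)$ and may be used as a test function. This regularity check is the only nontrivial analytic prerequisite.

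Next I would multiply the PDE \eqref{eqn:PDE} by $(x_{2}+H)\overline{\partial u/\partial x_2}$, integrate over $\Omega$, and take twice the real part. For the Laplacian-like part I would integrate by parts once to move one derivative off $\nabla\cdot(\varepsilon^{-1}\nabla u)$, producing a boundary integral $2H\int_{\Gamma_H}\varepsilon_{+}^{-1}|\partial u/\partial x_2|^{2}$ (the $\Gamma_{-H}$ contribution vanishes, the $\Gamma_L,\Gamma_R$ ones cancel by quasi-periodicity) together with interior terms. Using the pointwise identity
\begin{equation*}
2\,\Re\!\left(\overline{\partial_{2}\partial_{j} u}\,\partial_{j} u\right)=\frac{\partial}{\partial x_2}|\partial_{j} u|^{2}, \qquad j=1,2,
\end{equation*}
these interior terms collapse to $\int_{\Omega}(x_2+H)\varepsilon^{-1}\,\partial_{2}|\nabla u|^{2}$, which I would then integrate by parts again to produce the terms $\tfrac{2}{\varepsilon}|\partial u/\partial x_2|^{2}$, $-(x_2+H)\partial_{2}(\varepsilon^{-1})|\nabla u|^{2}$, and the boundary piece $-2H\int_{\Gamma_H}\varepsilon_{+}^{-1}|\nabla u|^{2}$ in the identity. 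An analogous use of $2\Re(u\,\overline{\partial_2 u})=\partial_{2}|u|^{2}$ on the $\kappa^{2}$ term contributes $-2H\kappa^{2}\int_{\Gamma_H}|u|^{2}+\kappa^{2}\int_{\Omega}|u|^{2}$, and on the right-hand side one is left with $-2\int_{\Omega}(x_{2}+H)\Re(\overline{F}\,\partial u/\partial x_2)$.

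Finally, to eliminate the residual volume term $\int_{\Omega}\varepsilon^{-1}|\nabla u|^{2}-\kappa^{2}\int_{\Omega}|u|^{2}$ and to introduce the DtN boundary integrals of the theorem, I would add the variational identity $B_{\varepsilon}(u,u)=-\int_{\Omega}F\overline{u}$ obtained by taking $v=u$ in \eqref{p1}; this is used as a complex identity, not a real-valued one, which is exactly what produces the un-real-parted $-\int_{\Gamma_{H}}\overline{u}T^{+}(u)-\int_{\Gamma_{-H}}\overline{u}T^{-}(u)$ and $-\int_{\Omega}F\overline{u}$ terms in the statement (its imaginary part being automatically consistent on both sides). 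A final algebraic rearrangement matches the form in \eqref{AL-eq24}. The main obstacle is bookkeeping: keeping track of signs, of the sign conventions for $T^{\pm}$, and of where each boundary contribution comes from; the only conceptual subtlety is the regularity step that legitimizes the multiplier.
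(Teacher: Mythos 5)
Your proposal is correct and follows essentially the same route as the paper's proof: the Rellich multiplier $(x_{2}+H)\,\overline{\partial u/\partial x_{2}}$ applied to the strong form (legitimized by $u\in H^{2}(\Omega)$ since $\varepsilon\in C^{\infty}$), twice the real part, the two pointwise identities $2\Re(\overline{\partial_{2}\partial_{j}u}\,\partial_{j}u)=\partial_{2}|\partial_{j}u|^{2}$ and $2\Re(u\,\overline{\partial_{2}u})=\partial_{2}|u|^{2}$, cancellation of the lateral boundary terms by quasi-periodicity and vanishing of the weight on $\Gamma_{-H}$, and finally substitution of the complex energy identity obtained from $v=u$ in \eqref{p1} to introduce the Dirichlet-to-Neumann terms. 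No gaps.
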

\begin{proof}
The proof can be found in the Appendix.
\end{proof}

Now we use the Rellich identity to show that an \textit{a-priori} estimate holds, and that the continuity constant has explicit dependences on $\kappa$ and $\varepsilon$ as long as certain non-trapping conditions are met. We first prove a lemma about controlling the $L^{2}$ norm of $u$, and then use it to determine the   \textit{a-priori} estimate.
\begin{lemma}
\label{lemma1}
If $u \in H_{qp}^{1}(\Omega)$ is a solution to the variational problem \eqref{p1}, then
\begin{equation}
\normLs{u}{\Omega}\leq \bigg[4H\varepsilon_{+}(a+1)+\frac{2H^2}{\min_{\Omega}\big(\frac{1}{\varepsilon} \big)}\bigg]\bigg[\Im \int_{\Gamma_{H}}\overline{u}T^{+}(u)-\Re \int_{\Gamma_{H}} \overline{u} T^{+}(u)+\bigg|\bigg| \bigg(\frac{2}{\epsilon}\bigg)^{1/2}\frac{\partial u}{\partial x_{2}}\bigg| \bigg|_{L^{2}(\Omega)}\bigg],
\end{equation}
where 
\begin{equation}
a=\max_{|\kappa^{2}\varepsilon_{+}-\alpha_{n}^{2}|<1}\bigg(\frac{1}{|\betanplus|} \bigg).
\end{equation}
\end{lemma}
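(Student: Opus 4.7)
The plan is to reduce the desired bound to mode-by-mode inequalities using the Fourier expansion of $u$ in $x_1$. First I would write $u(x_1,x_2)=\sum_{n\in\mathbb{Z}} u_n(x_2)\exp(i\alpha_n x_1)$, which is legitimate since $u\in H^1_{qp}(\Omega)$. Parseval's identity then represents $\|u\|^2_{L^2(\Omega)}$, $\|\partial u/\partial x_2\|^2_{L^2(\Omega)}$, and $\|u\|^2_{L^2(\Gamma_H)}$ as $\ell^2$-type sums of the $n$-mode coefficients (up to the fixed $L_x$ normalisation implicit in (\ref{Dtn})).

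The core one-dimensional estimate is as follows. For each $n$, the fundamental theorem of calculus gives $u_n(x_2)=u_n(H)-\int_{x_2}^{H} u_n'(s)\,ds$, and Cauchy--Schwarz then yields $|u_n(x_2)|^2\le 2|u_n(H)|^2+2(H-x_2)\int_{-H}^{H}|u_n'(s)|^2\,ds$. Integrating in $x_2$ over $(-H,H)$ and summing in $n$ produces the clean intermediate inequality
\begin{equation*}
\|u\|^2_{L^2(\Omega)} \le 4H\,\|u\|^2_{L^2(\Gamma_H)} + 4H^2\left\|\frac{\partial u}{\partial x_2}\right\|^2_{L^2(\Omega)}.
\end{equation*}
The volume term is absorbed into the weighted norm via $|\partial u/\partial x_2|^2\le[2\min_\Omega(1/\varepsilon)]^{-1}\,(2/\varepsilon)|\partial u/\partial x_2|^2$, producing the coefficient $2H^2/\min_\Omega(1/\varepsilon)$.

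The main obstacle, and the step that actually uses the hypotheses, is rewriting the trace sum $\sum_n|u_n(H)|^2$ in terms of the DtN quantities. I would split the sum according to whether $|\kappa^2\varepsilon_+-\alpha_n^2|\ge 1$ or $<1$. In the first case $|\beta_n^+|\ge 1$, so $|u_n(H)|^2\le |\beta_n^+|\,|u_n(H)|^2$; in the second case, by the definition of $a$, $|u_n(H)|^2\le a\,|\beta_n^+|\,|u_n(H)|^2$. Either way $|u_n(H)|^2\le (a+1)|\beta_n^+|\,|u_n(H)|^2$, and summing over $n$ matches $\varepsilon_+\bigl[\Im\int_{\Gamma_H}\bar u\,T^+(u)-\Re\int_{\Gamma_H}\bar u\,T^+(u)\bigr]$ exactly by (\ref{Dtn}), contributing the coefficient $4H\varepsilon_+(a+1)$. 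This step is delicate because $|\beta_n^+|$ can be arbitrarily small near the Rayleigh--Wood cutoffs; the no-Wood-anomaly assumption $\alpha_n\ne\pm\kappa\sqrt{\varepsilon_+}$ is precisely what guarantees $a<\infty$. Finally, combining the two partial bounds and majorising $AX+BY$ by $(A+B)(X+Y)$ for the nonnegative quantities $A,B,X,Y$ in question produces the lemma in its stated form.
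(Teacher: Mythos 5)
Your proof is correct and follows essentially the same route as the paper's: the paper cites Lemma~4.3 of Lechleiter--Ritterbusch for the intermediate inequality that you re-derive via the fundamental theorem of calculus, and it treats the near-cutoff modes by the same splitting on $|\kappa^{2}\varepsilon_{+}-\alpha_{n}^{2}|\lessgtr 1$ with the constant $a$, arriving at the same factor $\varepsilon_{+}(a+1)$ through \eqref{Dtn}. The only discrepancy is that your argument (correctly) yields the \emph{squared} weighted norm $\normblank{(2/\varepsilon)^{1/2}\,\partial u/\partial x_{2}}{L^{2}(\Omega)}^{2}$, whereas the stated lemma displays the unsquared norm; this appears to be a typo in the statement, since the paper's subsequent application in Theorem~\ref{estimate} also uses the squared quantity.
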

\begin{proof}
By virtue of Lemma 4.3 of Ref.~\cite{Lechleiter}, we know that 
\begin{equation}
\normLs{u}{\Omega} \leq 4H \int_{\Gamma_{H}}\big|u \big|^{2}+4H^{2} \normL{\frac{\partial u}{\partial x_{2}}}{\Omega},
\end{equation}
which is true for all $u \in H^{1}(\Omega)$.
We notice that by Parseval's theorem and by the definition of $\betanplus$,
\begin{align}
\nonumber
\int_{\Gamma_{H}}\big|u \big|^{2} &\leq \sum_{1+\alpha_{n}^{2}<\kappa^{2}\varepsilon_{+}}\big|\betanplus \big| \big|u_{n}(H) \big|^{2}+\sum_{\alpha_{n}^{2}>\kappa^{2}\varepsilon_{+}+1}\big|\betanplus \big| \big|u_{n}(H) \big|^{2}+\sum_{|\kappa^{2}\varepsilon_{+}-\alpha_{n}^{2}|<1}\big|u_{n}(H) \big|^{2}\\
\nonumber
&\leq \varepsilon_{+} \bigg[\Im \int_{\Gamma_{H}}\overline{u} T^{+}(u)-\Re \int_{\Gamma_{H}}\overline{u} T^{+}(u)\bigg]+a \sum_{|\kappa^{2}\varepsilon_{+}-\alpha_{n}^{2}|<1}\big|\betanplus \big|\big|u_{n}(H) \big|^{2}\\
&+\sum_{|\kappa^{2}\varepsilon_{+}-\alpha_{n}^{2}|<1}(1-a\big|\betanplus \big|)\big|u_{n}(H) \big|^{2}.
\end{align}
By virtue of our choice of $a$, $1-a|\betanplus| \leq 0$ 
 and the last sum is non-positive. We add back all the missing terms (where $|\kappa^{2}\varepsilon_{+}-\alpha_{n}^{2}|>1$) into the second to last sum and thus obtain 
\begin{equation}
\normLs{u}{\Omega}\leq 4H\varepsilon_{+}(a+1)\bigg[\Im \int_{\Gamma_{H}}\overline{u} T^{+}(u)-\Re \int_{\Gamma_{H}}\overline{u} T^{+}(u)\bigg]+4H^{2} \normL{\frac{\partial u}{\partial x_{2}}}{\Omega},
\end{equation}
whereby Lemma~\ref{lemma1} is proved.
\qed 
\end{proof}

Our next result is the desired continuity estimate when $\varepsilon$ is smooth.
\begin{theorem}
\label{estimate}
Assume that $\varepsilon \in C^{\infty}(\mathbb{R}^{2})$ is real and $\Re(\epsilon)>0$. Suppose also that the non-trapping conditions 
\begin{enumerate}
\item 
$\frac{\partial}{\partial x_{2}}\big(\frac{1}{\varepsilon} \big)\leq 0 $ in $\Omega$, and 
\item $\varepsilon=\varepsilon_{+}$ on $\Gamma_{H}$
\end{enumerate}
hold. Then, given an $F \in L^{2}(\Omega)$, there exists a unique solution $u \in H_{qp}^{1}(\Omega)$ to the variational problem \eqref{p1} and a continuity constant $C(\kappa,\varepsilon)>0$ such that
\begin{equation}
\normH{u}{\Omega} \leq C(\kappa,\varepsilon) \normL{F}{\Omega}
\end{equation}
with 
\begin{equation}
C(\kappa,\epsilon)=\min \bigg(\min_{\Omega} \bigg(\frac{1}{\varepsilon} \bigg),1 \bigg)^{-1}+(\kappa^{2}+1) \bigg[ 4H\varepsilon_{+}(a+1)+\frac{2H^{2}}{\min_{\Omega}\big(\frac{1}{\varepsilon} \big)}\bigg]\bigg[4H\big(1+\varepsilon_{+}^{1/2}\kappa \big)+2 \bigg].
\label{constant}
\end{equation}
\end{theorem}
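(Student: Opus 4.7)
\end{theorem}

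\noindent\textbf{Proof plan.} The plan is to combine three ingredients: the Rellich identity of Theorem~\ref{rellich}, the $L^{2}$-bound of Lemma~\ref{lemma1}, and the variational identity \eqref{p1} tested with $v=u$; each will control a different piece of the $H^{1}$ norm, and Young's inequality will be used repeatedly to close the estimate.

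First I would take the real part of \eqref{AL-eq24}. Condition~1 makes the bulk term $-(x_{2}+H)\partial_{x_{2}}(1/\varepsilon)|\nabla u|^{2}$ non-negative, and by \eqref{Dtn} the real parts $-\Re\int_{\Gamma_{H}}\overline{u}T^{+}(u)$ and $-\Re\int_{\Gamma_{-H}}\overline{u}T^{-}(u)$ are non-negative as well. Condition~2 ($\varepsilon=\varepsilon_{+}$ on $\Gamma_{H}$) allows me to expand $u$ on $\Gamma_{H}$ in the Rayleigh--Bloch series \eqref{Rayleigh} and compute the boundary integral $2H\int_{\Gamma_{H}}(-2|\partial_{x_{2}}u|^{2}/\varepsilon_{+}+|\nabla u|^{2}/\varepsilon_{+}-\kappa^{2}|u|^{2})$ mode-by-mode: evanescent modes will contribute zero and each propagating mode will contribute a non-positive term whose absolute value is bounded by $4H\kappa\sqrt{\varepsilon_{+}}\,\Im\int_{\Gamma_{H}}\overline{u}T^{+}(u)$, using $\sqrt{\kappa^{2}\varepsilon_{+}-\alpha_{n}^{2}}\leq\kappa\sqrt{\varepsilon_{+}}$ together with \eqref{Dtn}. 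Bounding the right-hand side of \eqref{AL-eq24} via Cauchy--Schwarz with $x_{2}+H\leq 2H$ then yields an estimate of the form
\begin{equation*}
2\,\Bigl\|\varepsilon^{-1/2}\tfrac{\partial u}{\partial x_{2}}\Bigr\|_{L^{2}(\Omega)}^{2}\leq 4H\kappa\sqrt{\varepsilon_{+}}\,\Im\!\!\int_{\Gamma_{H}}\!\!\overline{u}T^{+}(u)+4H\normL{F}{\Omega}\Bigl\|\tfrac{\partial u}{\partial x_{2}}\Bigr\|_{L^{2}(\Omega)}+\normL{F}{\Omega}\normL{u}{\Omega}.
\end{equation*}

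Next, I would take the imaginary part of \eqref{p1} with $v=u$; since the bulk integrand is real (as $\varepsilon$ is real), this yields $\Im\int_{\Gamma_{H}}\overline{u}T^{+}(u)+\Im\int_{\Gamma_{-H}}\overline{u}T^{-}(u)=\Im\int_{\Omega}F\overline{u}$, and both imaginary boundary contributions are non-negative by \eqref{Dtn}, so each is at most $\normL{F}{\Omega}\normL{u}{\Omega}$. Substituting into the previous display and using Young's inequality to absorb $\|\partial u/\partial x_{2}\|_{L^{2}(\Omega)}$ into the left-hand side produces an estimate of $\|\partial u/\partial x_{2}\|_{L^{2}(\Omega)}^{2}$ in terms of $\normLs{F}{\Omega}$ and $\normLs{u}{\Omega}$. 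Feeding this bound, together with the estimates of $\Im\int\overline{u}T^{+}(u)$ and of $-\Re\int\overline{u}T^{+}(u)$ (the latter extracted directly from the Rellich inequality above), into Lemma~\ref{lemma1}, and applying Young's inequality a second time, I would obtain an $L^{2}$ estimate $\normLs{u}{\Omega}\leq K\,\normLs{F}{\Omega}$ whose constant has precisely the explicit $\kappa$- and $\varepsilon$-dependence appearing in the second summand of \eqref{constant}.

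Finally, to recover a full $H^{1}$ bound I would take the real part of \eqref{p1} with $v=u$ and use $\Re\int\overline{u}T^{\pm}(u)\leq 0$ from \eqref{Dtn} to obtain $\min_{\Omega}(1/\varepsilon)\,\|\nabla u\|_{L^{2}(\Omega)}^{2}\leq \kappa^{2}\normLs{u}{\Omega}+\normL{F}{\Omega}\normL{u}{\Omega}$; combined with the $L^{2}$ estimate of the previous paragraph this reproduces the explicit constant $C(\kappa,\varepsilon)$ in \eqref{constant}. Uniqueness is immediate on taking $F=0$, and existence then follows from the Fredholm alternative, since $B_{\varepsilon}$ satisfies a G{\aa}rding inequality on $H_{qp}^{1}(\Omega)$ with the $\kappa^{2}|u|^{2}$ and Dirichlet-to-Neumann contributions providing compact perturbations. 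The main obstacle will be the $\Gamma_{H}$ boundary term of the Rellich identity under condition~2: it enters the left-hand side with an unfavourable sign, and the crucial observation is that it can nevertheless be controlled by the positively-signed imaginary Dirichlet-to-Neumann contribution, which is then bounded through the imaginary part of the variational problem.
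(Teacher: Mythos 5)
Your plan follows the paper's proof essentially step for step: take the real part of the Rellich identity and use the two non-trapping conditions, evaluate the $\Gamma_{H}$ boundary term mode-by-mode (evanescent modes vanish, propagating modes are controlled by $\Im\int_{\Gamma_{H}}\overline{u}T^{+}(u)\leq\Im\int_{\Omega}F\overline{u}$ from the imaginary part of \eqref{p1} with $v=u$), feed the result into Lemma~\ref{lemma1} for the $L^{2}$ bound, and close with the real part of \eqref{p1} tested with $v=u$. The only cosmetic differences are your use of Young's inequality in the intermediate bookkeeping (which may not reproduce the constant \eqref{constant} exactly as written) and your appeal to the Fredholm alternative rather than the inf-sup condition for existence; both are sound.
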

\begin{proof}
Using the first non-trapping condition and taking the real part of the Rellich identity, we get
\begin{align}
\nonumber
\int_{\Omega} \frac{2}{\varepsilon} \bigg|\frac{\partial u}{\partial x_{2}} \bigg|^{2}-\Re \int_{\Gamma_{H}} \overline{u} T^{+}(u) &\leq -2 \int_{\Omega}(x_{2}+H) \Re \bigg(\overline{F} \frac{\partial u}{\partial x_{2}} \bigg)-\Re \int_{\Omega} F \overline{u}\\
&+2H\int_{\Gamma_{H}}\bigg(\frac{2}{\varepsilon}\bigg|\frac{\partial u}{\partial x_{2}} \bigg|^{2}-\frac{1}{\varepsilon}\big|\nabla u \big|^{2}+\kappa^{2}\big|u \big|^{2} \bigg). 
\label{E4}
\end{align}
after using the inequality $-\Re \int_{\Gamma_{-H}}\overline{u}T^{-}(u) \geq 0$. Let us recall that $u$ satisfies the same Rayleigh expansion as   provided in \eqref{Rayleigh}. Then,   using Parseval's theorem and the second non-trapping condition, we obtain
\begin{align}
\nonumber
2H\int_{\Gamma_{H}}\bigg(\frac{2}{\varepsilon}\bigg|\frac{\partial u}{\partial x_{2}} \bigg|^{2}-\frac{1}{\varepsilon}\big|\nabla u \big|^{2}+\kappa^{2}\big|u \big|^{2} \bigg)&=2H \sum_{n \in \mathbb{Z}} \bigg( \frac{1}{\varepsilon_{+}} \big|\kappa^{2}\varepsilon_{+}-\alpha_{n}^{2} \big|-\frac{1}{\varepsilon_{+}}\alpha_{n}^{2}+\kappa^{2} \bigg) \big|u_{n}(H) \big|^{2}\\
\nonumber
&=4H \sum_{\alpha_{n}^{2}<\kappa^{2}\varepsilon_{+}}\bigg(\kappa^{2}-\frac{1}{\varepsilon_{+}}\alpha_{n}^{2} \bigg)\big|u_{n}(H) \big|^{2}\\
\nonumber
&=4H\varepsilon_{+}^{-1} \sum_{\alpha_{n}^{2}<\kappa^{2}\varepsilon_{+}} \bigg(\kappa^{2}\varepsilon_{+}-\alpha_{n}^{2} \bigg)\big|u_{n}(H) \big|^{2}\\
&\leq 4H \varepsilon_{+}^{1/2} \kappa \Im \int_{\Omega}F\overline{u}.
\end{align}
This argument is similar to  Lemma~2.2 of Ref.~\cite{Monk}, but we have used different Dirichlet-to-Neumann operators. The inequality $\Im \int_{\Gamma_{H}}\overline{u} T^{+}(u) \leq \Im \int_{\Omega}F \overline{u}$ follows
on setting $u=v$ in the variational problem \eqref{p1} and taking the imaginary part thereof. We combine this result with \eqref{E4} and add $\Im \int_{\Gamma_{H}}\overline{u} T^{+}(u)$ to both sides to obtain
\begin{align}
\nonumber
\int_{\Omega} \frac{2}{\varepsilon} \bigg|\frac{\partial u}{\partial x_{2}} \bigg|^{2}-\Re \int_{\Gamma_{H}} \overline{u} T^{+}(u) +\Im \int_{\Gamma_{H}}\overline{u} T^{+}(u) &\leq -2 \int_{\Omega}(x_{2}+H) \Re \bigg(\overline{F} \frac{\partial u}{\partial x_{2}} \bigg)-\Re \int_{\Omega} F \overline{u}\\  
&+\bigg(4H\varepsilon_{+}^{1/2}\kappa+1\bigg)\Im \int_{\Omega}F\overline{u}.
\label{E5}
\end{align}
Then we combine \eqref{E5} and Lemma~\ref{lemma1} to get
\begin{align}
\nonumber
\normLs{u}{\Omega} &\leq 4H\varepsilon_{+}(a+1)\bigg[\Im \int_{\Gamma_{H}}\overline{u} T^{+}(u)-\Re \int_{\Gamma_{H}}\overline{u} T^{+}(u)\bigg]+4H^{2} \normL{\frac{\partial u}{\partial x_{2}}}{\Omega} \\
\nonumber
&\leq \bigg[4H\varepsilon_{+}(a+1)+\frac{2H^2}{\min_{\Omega}\big(\frac{1}{\varepsilon} \big)}\bigg]\\
\nonumber
&\quad\times \bigg[-2 \int_{\Omega}(x_{2}+H) \Re \bigg(\overline{F} \frac{\partial u}{\partial x_{2}} \bigg)-\Re \int_{\Omega} F \overline{u}
+\bigg(4H(\varepsilon_{+})^{1/2}\kappa+1\bigg)\Im \int_{\Omega}F\overline{u} \bigg]\\
&\leq \bigg[4H\varepsilon_{+}(a+1)+\frac{2H^2}{\min_{\Omega}\big(\frac{1}{\varepsilon} \big)}\bigg]\bigg[4H\big(1+\varepsilon_{+}^{1/2}\kappa \big)+2 \bigg]\normL{F}{\Omega}\normH{u}{\Omega}.
\label{AL-eq35}
\end{align}
After setting $v=u$ in the variational problem \eqref{p1} and taking the real part thereof, we have  
\begin{equation}
\label{AL-eq36}
\normHs{u}{\Omega}\leq \min \bigg( \min_{\Omega}\bigg( \frac{1}{\varepsilon}\bigg),1\bigg)^{-1} \bigg[\normL{F}{\Omega}\normH{u}{\Omega} + (\kappa^{2}+1) \normLs{u}{\Omega} \bigg].
\end{equation}
After first combining \eqref{AL-eq35} and \eqref{AL-eq36} and then dividing the result  by $\normH{u}{\Omega}$,
we obtain  the \textit{a-priori} estimate. Existence and uniqueness of $u$ follow because the \textit{a-priori} estimate implies an inf-sup condition for $B_{\varepsilon}(\cdot,\cdot)$ \cite{Monk}.
\qed
\end{proof}

So far we have not discussed problem \eqref{p2} at all. In the trivial case where $\varepsilon$ is constant, $\varepsilon=\varepsilon_{h}$ and there is nothing new to prove. Generally however, $\varepsilon_{h}$ is only piecewise smooth even if $\varepsilon \in C^{\infty}(\mathbb{R}^{2})$, because it has jumps over the inter-slice boundaries. Therefore, to show existence, uniqueness and an $\textit{a-priori}$ estimate for the $\varepsilon_{h}$ problem (and to cover applications to multilayered devices), we need to allow for coefficients with less smoothness.

\section{\textit{A-priori} estimates for $L^{\infty}$ coefficients}
\label{Linf}
We assumed in Section~\ref{Rellich}  that $\varepsilon \in C^{\infty}(\mathbb{R}^{2})$ and showed that an \textit{a-priori} estimate holds for non-trapping domains. In this section, we  extend the \textit{a-priori} estimates to $\varepsilon\in L^{\infty}(\mathbb{R}^{2})$. Remarkably, the
continuity constant  defined in the forthcoming Lemma can be used even for  general $\varepsilon$, and a general right hand side.  Here we use the technique of Graham et al. \cite{Graham}, but modify their argument slightly for our use. They showed these estimates for an exterior Dirichlet problem, so we check that the results hold for our quasi-periodic problem.  First, we prove an \textit{a-priori} estimate where the right   side lies in the dual space $(H_{qp}^{1}(\Omega))^{'}$, but the $\varepsilon$ is smooth.

\begin{lemma}
\label{dual}
Assume that $\varepsilon \in C^{\infty}(\mathbb{R}^{2})$ is real with $\Re(\varepsilon)>0$ and
that $\varepsilon$ satisfies the non-trapping conditions given in Theorem \ref{estimate}. 
For general data,  $F \in (H_{qp}^{1}(\Omega))^{'}$, let $\tilde{u}\in H_{qp}^{1}(\Omega)$ satisfy 
\begin{equation}
B_{\epsilon}(\tilde{u},v)=F(v)
\end{equation}
for all $v \in H_{qp}^{1}(\Omega)$. Then $\tilde{u}$ exists and is unique; furthermore, 
\begin{equation}
\normH{\tilde{u}}{\Omega}\leq C_{1}(\kappa,\varepsilon) \normblank{F}{\big(H_{qp}^{1}(\Omega)\big)^{'}},
\end{equation}
\end{lemma}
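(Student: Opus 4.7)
The plan is to bootstrap from the $L^{2}(\Omega)$-data estimate of Theorem~\ref{estimate} to an $(H_{qp}^{1}(\Omega))^{\prime}$-data estimate by combining a Fredholm/G\aa{}rding decomposition with a duality argument on the adjoint problem.

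For existence and uniqueness I would decompose $B_{\varepsilon}(w,v) = A(w,v) + K(w,v)$, where
\begin{equation*}
A(w,v) = \int_{\Omega}\bigg(\frac{1}{\varepsilon}\nabla w\cdot\nabla\overline{v} + w\overline{v}\bigg)
\end{equation*}
is coercive on $H_{qp}^{1}(\Omega)$ (since $1/\varepsilon$ is bounded below) and $K$ collects the lower-order term $-(\kappa^{2}+1)\int_{\Omega} w\overline{v}$ together with the two Dirichlet-to-Neumann boundary integrals. The latter are compact on $H_{qp}^{1}(\Omega)$ because the trace is compact into $H^{-1/2}$ of each boundary and $T^{\pm}$ is bounded from $H^{1/2}$ into $H^{-1/2}$. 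The Fredholm alternative then reduces existence for $F \in (H_{qp}^{1}(\Omega))^{\prime}$ to uniqueness; but any homogeneous solution satisfies \eqref{p1} with source $0 \in L^{2}(\Omega)$, so Theorem~\ref{estimate} forces $\tilde u = 0$.

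To bound $\tilde u$ in $L^{2}(\Omega)$ I would use duality. Given $g \in L^{2}(\Omega)$, let $w_{g} \in H_{qp}^{1}(\Omega)$ solve the adjoint variational problem $B_{\varepsilon}(v,w_{g}) = \int_{\Omega} v\,\overline{g}$ for all $v \in H_{qp}^{1}(\Omega)$. Complex-conjugating this identity (and noting that $\overline{v}$ lies in the $(-\alpha_{0})$-quasi-periodic space) recasts it as an equation of the same form as \eqref{p1} but with the outgoing operator $T^{\pm}$ replaced by its complex conjugate --- an incoming-wave DtN whose imaginary part has the opposite sign but whose real part is unchanged compared with \eqref{Dtn}. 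Both the Rellich identity of Theorem~\ref{rellich} and the argument of Lemma~\ref{lemma1} depend only on $\varepsilon$, on the magnitudes $|\alpha_{n}|$ and $|\betanplus|$, and on sign information invariant under this conjugation, so the proof of Theorem~\ref{estimate} carries over verbatim to the adjoint problem and yields $\normH{w_{g}}{\Omega} \leq C(\kappa,\varepsilon)\normL{g}{\Omega}$ with the same constant $C(\kappa,\varepsilon)$ of \eqref{constant}. Setting $v = \tilde u$ gives $\int_{\Omega}\tilde u\,\overline{g} = B_{\varepsilon}(\tilde u, w_{g}) = F(w_{g})$, and taking the supremum over $g \in L^{2}(\Omega)$ yields $\normL{\tilde u}{\Omega} \leq C(\kappa,\varepsilon)\normblank{F}{(H_{qp}^{1}(\Omega))^{\prime}}$.

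To finish, coercivity of $A$ gives
\begin{equation*}
c_{0}\normHs{\tilde u}{\Omega} \leq \Re A(\tilde u,\tilde u) = \Re F(\tilde u) + (\kappa^{2}+1)\normLs{\tilde u}{\Omega} + \Re\int_{\Gamma_{-H}}\overline{\tilde u}\,T^{-}(\tilde u) + \Re\int_{\Gamma_{H}}\overline{\tilde u}\,T^{+}(\tilde u).
\end{equation*}
The $F$-term is bounded by $\normblank{F}{(H_{qp}^{1}(\Omega))^{\prime}}\normH{\tilde u}{\Omega}$; the two boundary integrals have non-positive evanescent parts (discarded) and finite propagating parts bounded by a trace inequality in terms of $\normLs{\tilde u}{\Omega}$, as per \eqref{Dtn}. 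Young's inequality then absorbs the $\normH{\tilde u}{\Omega}$ factor, and the $L^{2}$-bound of the previous paragraph closes the argument, producing an explicit $C_{1}(\kappa,\varepsilon)$ in terms of $C(\kappa,\varepsilon)$ from \eqref{constant}. The main obstacle is the claim that the adjoint problem inherits the Theorem~\ref{estimate} estimate with the \emph{same} constant; since the non-trapping conditions and the quantity $a$ of Lemma~\ref{lemma1} depend only on $\varepsilon$ and on the unordered set of $|\alpha_{n}|^{2}$ (unchanged when $\alpha_{0}\to -\alpha_{0}$), this is ultimately a matter of careful sign bookkeeping in \eqref{Dtn} rather than a new analytic obstruction.
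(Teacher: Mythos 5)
Your plan has one step that fails as stated: the claim that the two Dirichlet-to-Neumann boundary forms are compact on $H_{qp}^{1}(\Omega)$. They are not. The operator $T^{+}$ acts diagonally by $(T^{+}\phi)_{n}=i\varepsilon_{+}^{-1}\betanplus\phi_{n}$ with $|\betanplus|\sim|\alpha_{n}|$, so on the (infinitely many) evanescent modes it is essentially an isomorphism from $H^{1/2}(\Gamma_{H})$ onto $H^{-1/2}(\Gamma_{H})$; the composite $H^{1}(\Omega)\to H^{1/2}(\Gamma_{H})\to H^{-1/2}(\Gamma_{H})\to (H^{1}(\Omega))'$ therefore has no compact factor, and your stated justification does not compose (a trace that is compact into $H^{-1/2}$ cannot then be fed into an operator bounded only from $H^{1/2}$). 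Concretely, lifting the $H^{1/2}$-normalized traces $\phi^{(n)}=(1+\alpha_{n}^{2})^{-1/4}\Lx^{-1/2}\exp(i\alpha_{n}x_{1})$ to a bounded, weakly null sequence $w_{n}$ in $H^{1}_{qp}(\Omega)$ gives $\big|\int_{\Gamma_{H}}\overline{w_{n}}T^{+}(w_{n})\big|=\varepsilon_{+}^{-1}|\betanplus|(1+\alpha_{n}^{2})^{-1/2}\to\varepsilon_{+}^{-1}>0$, which is incompatible with compactness. Only the finitely many propagating modes contribute a compact (finite-rank) piece; the evanescent part must instead be kept on the coercive side using the sign condition \eqref{Dtn}, i.e. $-\Re\int_{\Gamma_{\pm H}}\overline{v}\,T^{\pm}(v)\geq 0$. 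Once you make that repair, your Fredholm decomposition essentially collapses into the paper's construction.

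The paper's proof is shorter and avoids both Fredholm theory and the adjoint problem: it defines the shifted form $B_{\varepsilon}^{+}(w,v)=B_{\varepsilon}(w,v)+2\kappa^{2}\int_{\Omega}w\overline{v}$, which is coercive by the sign condition just mentioned, solves $B_{\varepsilon}^{+}(u^{+},v)=F(v)$ by Lax--Milgram, and then writes $\tilde{u}=u^{+}+q$ where $q$ solves the \emph{original} problem with the $L^{2}$ right-hand side $2\kappa^{2}u^{+}$, to which Theorem~\ref{estimate} applies directly; the triangle inequality yields $C_{1}(\kappa,\varepsilon)$. By contrast, your duality step requires well-posedness and the a-priori bound for the conjugated (incoming, $(-\alpha_{0})$-quasi-periodic) problem with the same constant. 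That transfer is true and standard for real $\varepsilon$, but it is not ``verbatim'': the sign of $\Im\int_{\Gamma_{H}}\overline{u}T^{+}(u)$ and the inequality $\Im\int_{\Gamma_{H}}\overline{u}T^{+}(u)\leq\Im\int_{\Omega}F\overline{u}$ both flip and must be rechecked through Lemma~\ref{lemma1} and Theorem~\ref{estimate}. Since the superposition $\tilde{u}=u^{+}+q$ delivers the $(H_{qp}^{1}(\Omega))'$ estimate without any of this, the adjoint machinery is avoidable overhead here.
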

where 
\begin{equation}
C_{1}(\kappa,\varepsilon)=\min \bigg(\min_{\Omega}\bigg(\frac{1}{\varepsilon} \bigg),\kappa^{2} \bigg)^{-1} \bigg[1+2\kappa^{2} C(\kappa,\varepsilon) \bigg].
\end{equation}

\begin{proof}
Define $B_{\varepsilon}^{+}(w,v)=B_{\varepsilon}(w,v)+2\kappa^{2} \int_{\Omega}w \overline{v}$ for all $w \in H_{qp}^{1}(\Omega)$ and $v \in H_{qp}^{1}(\Omega)$. Since the signs of the real and imaginary parts of the Dirichlet-to-Neumann integrals on ${\Gamma_{\pm H}}$   are known, we have 
\begin{equation}
-\Re \int_{\Gamma_{H}}\overline{v}T^{+}(v)-\Re \int_{\Gamma_{-H}}\overline{v}T^{-}(v) \geq 0,
\end{equation}
and the sesquilinear form $B^{+}_{\varepsilon}(\cdot,\cdot)$ is coercive since
\begin{equation}
\big|B^{+}_{\varepsilon}(v,v) \big|\geq \Re \big( B^{+}_{\varepsilon}(v,v) \big) \geq \min \bigg(\min_{\Omega}\bigg(\frac{1}{\varepsilon} \bigg),\kappa^{2} \bigg) \normHs{v}{\Omega}
\end{equation}
for every $v \in H_{qp}^{1}(\Omega).$ 

By virtue of the Lax--Milgram Lemma \cite{Petryshyn,evans}, given an $F \in (H_{qp}^{1}(\Omega))^{'}$ we define $u^{+} \in H_{qp}^{1}(\Omega)$ to be the solution to the problem
\begin{equation}
B^{+}_{\varepsilon}(u^{+},v)=F(v),
\end{equation}
for all $v \in H_{qp}^{1}(\Omega)$; furthermore,   
\begin{equation}
\normH{u^{+}}{\Omega}\leq \min \bigg(\min_{\Omega}\bigg(\frac{1}{\varepsilon} \bigg),\kappa^{2} \bigg)^{-1} \normblank{F}{\big(H_{qp}^{1}(\Omega)\big)^{'}}. 
\end{equation}
First, we let $q \in H_{qp}^{1}(\Omega)$ be the solution to 
\begin{equation}
B_{\varepsilon}(q,v)=2\kappa^{2}\int_{\Omega}u^{+}\overline{v}
\end{equation}
for all $v \in H_{qp}^{1}(\Omega)$, which exists and is unique because $2\kappa^{2}u^{+} \in L^{2}(\Omega)$; then, we apply Theorem \ref{estimate}. To complete the proof, we notice that $B_{\varepsilon}(u^{+}+q,v)=B_{\varepsilon}^{+}(u^{+},v)=F(v)$ for all $v \in H_{qp}^{1}(\Omega).$ Since $\tilde{u}=q+u^{+}$, a solution exists and the \textit{a-priori} estimate shows it is unique. \qed
\end{proof}

Now we extend our \textit{a-priori} results to $L^{\infty}(\mathbb{R}^{2})$ coefficients. For a periodic $\varepsilon \in L^{\infty}(\mathbb{R}^{2})$, we seek a sequence of smooth and periodic $C^{\infty}( \mathbb{R}^{2})$ functions that converge to $\varepsilon$ in the sense of $L^{2}$. To use our previous results, this sequence must be uniformly bounded and each function in the sequence has to satisfy the non-trapping conditions given in Theorem \ref{estimate}. To do this, we first prove the following theorem, which is an analog of Theorem 2.7 of Ref.~\cite{Graham}.

\begin{theorem}
\label{sequence}
Let $\phi \in L^{\infty}(\mathbb{R}^{2})$ be given such that $\phi$ is periodic with period $\Lx$ in $x_{1}$ almost everywhere and there are two constants $\phi_{\text{min}}$ and $  \phi_{\text{max}}\geq \phi_{\text{min}}$
such that
\begin{equation}
\phi_{\text{min}}\leq \phi \leq \phi_{\text{max}},
\end{equation}
almost everywhere in $\Omega$.
We can write 
\begin{equation}
\phi(\bm{x})=\phi_{\text{min}}+\Pi(\bm{x}),
\end{equation}
where $\Pi \in L^{\infty}(\mathbb{R}^{2})$ is almost everywhere $\Lx$-periodic in $x_{1}$. Provided that for all $\tau \geq 0$, $\phi$ is monotonically increasing in the $x_{2}$-direction, i.e.,
\begin{equation}
\label{essinf}
\essinf_{\bm{x} \in \Omega}\bigg[\Pi(\bm{x}+\tau\bm{e}_{2})-\Pi(\bm{x}) \bigg]\geq 0,
\end{equation}
there is a sequence $\phi_{\delta} \in C^{\infty}(\mathbb{R}^{2})$ of 
$\Lx$-periodic functions in $x_{1}$ such that
\begin{enumerate}
\item $\normL{\phi-\phi_{\delta}}{\Omega} \to 0$ as $\delta \to 0$, 
\item $\phi_{\text{min}} \leq \phi_{\delta} \leq \phi_{\text{max}}$, and
\item $\frac{\partial}{\partial x_{2}} \phi_{\delta}  \geq 0$.
\end{enumerate}
\end{theorem}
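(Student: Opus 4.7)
\medskip

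\noindent\textbf{Proof proposal.} The plan is to prove the theorem by a standard mollification argument, with the only subtlety being the verification that the monotonicity assumption \eqref{essinf} is inherited by the smoothed functions. Fix a non-negative, radially symmetric mollifier $\rho\in C_{c}^{\infty}(\mathbb{R}^{2})$ with $\operatorname{supp}\rho\subset\{|\bm{x}|\le 1\}$ and $\int_{\mathbb{R}^{2}}\rho=1$. For $\delta>0$ set $\rho_{\delta}(\bm{x})=\delta^{-2}\rho(\bm{x}/\delta)$ and define
\begin{equation*}
\phi_{\delta}(\bm{x}) \;=\; (\phi*\rho_{\delta})(\bm{x}) \;=\; \int_{\mathbb{R}^{2}}\rho_{\delta}(\bm{y})\,\phi(\bm{x}-\bm{y})\,d\bm{y}.
\end{equation*}
Since $\phi\in L^{\infty}(\mathbb{R}^{2})$ is defined on all of $\mathbb{R}^{2}$ (so there is no boundary issue to worry about) and $\rho_{\delta}$ is smooth and compactly supported, $\phi_{\delta}\in C^{\infty}(\mathbb{R}^{2})$, and $\Lx$-periodicity in $x_{1}$ passes through the convolution by a translation of the integration variable.

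Next I would verify the three listed properties in turn. Property~2 is immediate: because $\rho_{\delta}\ge 0$ and $\int\rho_{\delta}=1$, the convolution $\phi_{\delta}$ is a weighted average of values of $\phi$, so $\phi_{\min}\le \phi_{\delta}\le \phi_{\max}$ pointwise. Property~1, namely $\norm{\phi-\phi_{\delta}}_{L^{2}(\Omega)}\to 0$, is the standard mollifier convergence for $L^{2}$-functions on a bounded domain, applied to $\phi|_{\Omega\cup\Omega_{\delta}}$ where $\Omega_{\delta}$ is a fixed $1$-neighborhood (for small $\delta$) of $\Omega$; this uses only that $\phi\in L^{\infty}(\mathbb{R}^{2})\subset L^{2}_{\text{loc}}(\mathbb{R}^{2})$.

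The key step is Property~3, which is where the hypothesis \eqref{essinf} is used. For any $\tau>0$ and any $\bm{x}\in\mathbb{R}^{2}$, write
\begin{equation*}
\phi_{\delta}(\bm{x}+\tau\bm{e}_{2})-\phi_{\delta}(\bm{x}) \;=\; \int_{\mathbb{R}^{2}}\rho_{\delta}(\bm{y})\bigl[\phi(\bm{x}+\tau\bm{e}_{2}-\bm{y})-\phi(\bm{x}-\bm{y})\bigr]d\bm{y},
\end{equation*}
and note that the bracketed difference equals $\Pi(\bm{x}+\tau\bm{e}_{2}-\bm{y})-\Pi(\bm{x}-\bm{y})$ since the constant $\phi_{\min}$ cancels. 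By \eqref{essinf} applied at the point $\bm{x}-\bm{y}$ (which ranges over a.e.\ point of $\mathbb{R}^{2}$ as $\bm{y}$ varies), this bracket is $\ge 0$ for a.e.\ $\bm{y}$; since $\rho_{\delta}\ge 0$, the integral is non-negative. Dividing by $\tau$ and letting $\tau\downarrow 0$ gives $\partial\phi_{\delta}/\partial x_{2}\ge 0$ everywhere, which is Property~3.

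The only obstacle worth flagging is the passage from the essential-infimum formulation of \eqref{essinf} to the pointwise inequality used inside the convolution: one must observe that the exceptional null set depends on $\tau$ but not on $\bm{y}$, so a simple change of variables in the outer integral justifies applying the inequality under the integral. Everything else is a routine verification, and the lemma then follows; this is an exact analog, in the quasi-periodic setting, of the construction in Theorem~2.7 of \cite{Graham}.
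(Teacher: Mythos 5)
Your proposal is correct and follows essentially the same route as the paper: mollify with a standard nonnegative compactly supported kernel, observe that the $L^\infty$ bounds, the $\Lx$-periodicity in $x_1$, and the $L^2$ convergence are standard, and push the essential-infimum monotonicity condition through the convolution to get $\partial\phi_\delta/\partial x_2\geq 0$. The only cosmetic differences are that the paper mollifies $\Pi$ and adds back the constant $\phi_{\min}$ (equivalent to your mollification of $\phi$, since $\int\rho_\delta=1$) and works on a bounded extended domain $\mathcal{U}\supset\supset\Omega$ rather than all of $\mathbb{R}^2$.
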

\begin{proof}
Consider the extended domain $\mathcal{U}=\{\bm{x} \in \mathbb{R}^{2} , \ -\Lx<x_{1}<2\Lx \}$ and define $\psi \in C^{\infty}_{0}(\mathbb{R}^{2})$ as
\begin{equation}
\psi(\bm{x})= \begin{cases} 
      C \exp \bigg( \frac{1}{|\bm{x}|-1} \bigg) & \text{if} \ |\bm{x}|<1, \\
      0 & \text{if} \ |\bm{x}|>1, \\
   \end{cases}
\end{equation}
where we choose $C$ so that $\int_{\mathbb{R}^{2}} \psi =1$. Let $\psi_{\delta}(\bm{x})=\delta^{-2}\psi(\bm{x}/\delta)$ for $\delta>0$. Furthermore, let $\phi_{\delta} \in C^{\infty}(\mathcal{U}_{\delta})$ be defined as
\begin{equation}
\phi_{\delta}(\bm{x})=\phi_{\text{min}}+(\Pi \ast \psi_{\delta})(\bm{x})=\phi_{\text{min}}+\int_{\mathbb{R}^{2}}\Pi(\bm{x}-\bm{y})\psi_{\delta}(\bm{y})\  d \bm{y},
\end{equation}
where  
\begin{equation}
\mathcal{U}_{\delta}=\{\bm{x} \in \mathcal{U}, \ \text{dist}(\bm{x},\partial \mathcal{U})>\delta \}.
\end{equation}  
Using standard properties of mollifiers (e.g., Theorem 7 in Ref.~\cite[Sec.~C.5]{evans}), we have that $\normL{\phi-\phi_{\delta}}{\mathcal{V}} \to 0$ as $\delta \to 0$, for any compact subset $\mathcal{V}$ of $\mathcal{U}$. If we choose $\delta<\Lx/2$ then $\Omega \subset \mathcal{U}_{\delta} \subset \subset \mathcal{U}$, so that $\normL{\phi-\phi_{\delta}}{\Omega}\to 0$ as $\delta \to 0$. The condition $\phi_{\text{min}} \leq \phi_{\delta} \leq \phi_{\text{max}}$ follows from the definition of $\phi_{\delta}$. To finish the proof, we notice that 
\begin{align}
\nonumber
\phi_{\delta}(x_{1}+\Lx,x_{2})&=\phi_{\text{min}}+\int_{|\bm{y}|<\delta}\Pi(\bm{x}-\bm{y}+\Lx\bm{e}_{1})\psi_{\delta}(\bm{y}) \ d\bm{y}\\[5pt]
&=\phi_{\delta}(x_{1},x_{2}),
\end{align}
since $\Pi(\bm{x})$ is an $\Lx$-periodic function in $x_1$. To show that each $\phi_{\delta}$ satisfies the non-trapping condition, we see that
\begin{equation}
(\Pi \ast \psi_{\delta})(\bm{x}+\tau\bm{e}_{2})-(\Pi \ast \psi_{\delta})(\bm{x})\geq \essinf_{\bm{x} \in \Omega}\bigg[\Pi(\bm{x}+\tau\bm{e}_{2})-\Pi(\bm{x}) \bigg] \int_{|\bm{y}|<\delta}\psi_{\delta}(\bm{y}) \ d \bm{y},
\end{equation}
for every $\tau \geq 0$. Since the $\psi_{\delta}$ are positive functions of compact support, this implies that $\frac{\partial}{\partial x_{2}}\phi_{\delta} \geq 0$.  
\qed
\end{proof}

The next result is the main result of this section, and it proves an \textit{a-priori} estimate for our problem with a general source term  and a general non-trapping condition. 
\begin{theorem}
\label{duals}
Given $\varepsilon \in L^{\infty}(\mathbb{R}^{2})$, assume that the generalized non-trapping condition 
\begin{equation}
\essinf_{\bm{x} \in \Omega}\bigg[\tilde{\Pi}(\bm{x}+\tau\bm{e}_{2})-\tilde{\Pi}(\bm{x}) \bigg]\geq 0
\end{equation}
holds for all $\tau\geq 0$, where $\varepsilon(\bm{x})=\varepsilon_{\text{min}}+\tilde{\Pi}(\bm{x})$. Then for $F \in \big(H_{qp}^{1}(\Omega)\big)^{'}$, the solution $u \in H_{qp}^{1}(\Omega)$ of
\begin{equation}
\label{Lax}
B_{\epsilon}(u,v)=F(v)
\end{equation}
for all $v \in H_{qp}^{1}$ exists and is unique; furthermore, 
$$\normH{u}{\Omega}\leq C_{1}(\kappa, \varepsilon)\normblank{F}{\big(H_{qp}^{1}(\Omega)\big)^{'}}.$$
\end{theorem}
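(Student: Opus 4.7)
The plan is to approximate $\varepsilon$ by a sequence of smooth coefficients $\varepsilon_\delta$ for which Lemma~\ref{dual} applies, derive uniform bounds on the corresponding solutions, and pass to a weak limit. First, using the generalized non-trapping hypothesis, write $\varepsilon = \varepsilon_{\min} + \tilde\Pi$ and invoke Theorem~\ref{sequence} to produce $\varepsilon_\delta \in C^\infty(\mathbb{R}^2)$, $\Lx$-periodic in $x_1$, with $\varepsilon_{\min} \le \varepsilon_\delta \le \varepsilon_{\max}$, $\varepsilon_\delta \to \varepsilon$ in $L^2(\Omega)$, and $\partial \varepsilon_\delta/\partial x_2 \ge 0$. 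Since $\partial(1/\varepsilon_\delta)/\partial x_2 \le 0$, condition~1 of Theorem~\ref{estimate} holds for $\varepsilon_\delta$. Condition~2 (equality to $\varepsilon_+$ on $\Gamma_H$) can be arranged by first modifying $\varepsilon$ to equal $\varepsilon_+$ in a thin strip $H-\eta<x_2<H$ before mollifying with $\delta<\eta$; this preserves the non-trapping structure and does not spoil $L^2$ convergence after letting $\eta\downarrow 0$.

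Next, Lemma~\ref{dual} produces a unique $u_\delta\in H^1_{qp}(\Omega)$ with $B_{\varepsilon_\delta}(u_\delta,v)=F(v)$ for all $v$, together with the bound $\|u_\delta\|_{H^1(\Omega)}\le C_1(\kappa,\varepsilon_\delta)\|F\|_{(H^1_{qp}(\Omega))'}$. The uniform envelopes $\varepsilon_{\min}\le \varepsilon_\delta\le \varepsilon_{\max}$ give $\min_\Omega(1/\varepsilon_\delta)\ge 1/\varepsilon_{\max}=\essinf_\Omega(1/\varepsilon)$, and inspection of the explicit formulas \eqref{constant} for $C(\kappa,\cdot)$ and for $C_1(\kappa,\cdot)$ shows both are monotone in $\min_\Omega(1/\cdot)$, so $C_1(\kappa,\varepsilon_\delta)\le C_1(\kappa,\varepsilon)$ uniformly in $\delta$. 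Extract a subsequence (not relabeled) with $u_\delta\rightharpoonup u$ in $H^1_{qp}(\Omega)$.

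To pass to the limit in the variational identity, fix $v\in H^1_{qp}(\Omega)$. Because $1/\varepsilon_\delta$ is uniformly bounded in $L^\infty$ and $\varepsilon_\delta\to\varepsilon$ in $L^2$, we may extract a further subsequence converging pointwise almost everywhere; dominated convergence then gives $1/\varepsilon_\delta\to 1/\varepsilon$ in every $L^p(\Omega)$, $p<\infty$. Consequently $(1/\varepsilon_\delta)\nabla\overline{v}\to (1/\varepsilon)\nabla\overline{v}$ strongly in $L^2(\Omega)$, and pairing with the weakly convergent $\nabla u_\delta$ lets the leading bilinear term pass to the limit. Rellich compactness handles the $\kappa^2$ term, and weak continuity of the trace $H^1_{qp}(\Omega)\to H^{1/2}_{qp}(\Gamma_{\pm H})$ together with boundedness of $T^{\pm}$ handles the Dirichlet-to-Neumann integrals. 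Hence $B_\varepsilon(u,v)=F(v)$ for every $v$.

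Finally, weak lower semicontinuity of the $H^1$ norm combined with the uniform estimate yields $\|u\|_{H^1(\Omega)}\le \liminf_\delta \|u_\delta\|_{H^1(\Omega)} \le C_1(\kappa,\varepsilon)\|F\|_{(H^1_{qp}(\Omega))'}$, establishing the claimed bound. Uniqueness is then immediate: any two solutions $u_1,u_2$ give $B_\varepsilon(u_1-u_2,v)=0$, and the estimate applied to the difference (with $F=0$) forces $u_1=u_2$. The main obstacle is the limit passage in the principal-part term, since the coefficient $1/\varepsilon_\delta$ sits in front of $\nabla u_\delta$ and only weak convergence is available for the gradient; the $L^\infty$ bound on $1/\varepsilon_\delta$ and the strong convergence of $1/\varepsilon_\delta\nabla\overline{v}$ are precisely what make the weak--strong pairing work. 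The secondary technical point is arranging condition~2 of Theorem~\ref{estimate} for the mollified coefficients, handled by the strip modification mentioned above.
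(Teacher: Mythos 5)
Your construction is sound as far as existence goes, but it takes a genuinely different route from the paper, and that difference creates a real gap at the uniqueness step. The paper does not build the solution as a weak limit; it starts from an \emph{arbitrary} solution $u$ of $B_{\varepsilon}(u,v)=F(v)$, approximates $u$ itself by a smooth quasi-periodic $u_{\xi}$, and splits the $\phi_{\delta}$-problem into $u'+u''$ (with $u'$ carrying the data $F$ and $u''$ carrying the consistency errors $B_{\varepsilon}(u-u_{\xi},\cdot)$ and $\int_{\Omega}(\varepsilon^{-1}-\phi_{\delta}^{-1})\nabla u_{\xi}\cdot\nabla\overline{v}$, the latter controllable because $\nabla u_{\xi}\in L^{\infty}$). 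Sending $\delta\to0$ and then $\xi\to0$ yields $\normH{u}{\Omega}\le C_{1}(\kappa,\varepsilon)\normblank{F}{(H_{qp}^{1}(\Omega))'}$ for \emph{every} solution, which is exactly what uniqueness requires. Your argument instead produces \emph{one} solution --- the weak limit of the $u_{\delta}$ --- and the estimate you obtain by weak lower semicontinuity applies only to that particular limit. When you then write that ``the estimate applied to the difference with $F=0$ forces $u_1=u_2$,'' you are invoking a bound you have not established: for $F=0$ your construction gives $u_{\delta}=0$ and hence the zero weak limit, but it says nothing about other possible elements of the kernel of $B_{\varepsilon}$. The uniqueness claim is therefore circular as written.

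The gap is fixable in two ways. Either (i) supplement your existence result with the Fredholm alternative: the sign conditions \eqref{Dtn} make $B_{\varepsilon}(w,w)+2\kappa^{2}\normLs{w}{\Omega}$ coercive, so $B_{\varepsilon}$ is a compact perturbation of a coercive form, the associated operator has index zero, and the surjectivity you proved (existence for every $F\in(H_{qp}^{1}(\Omega))'$) then implies injectivity; or (ii) replace the weak-limit construction by the paper's perturbation argument, which bounds an arbitrary solution directly. Everything else in your proposal is correct: the weak--strong pairing in the principal part (strong $L^{2}$ convergence of $\varepsilon_{\delta}^{-1}\nabla\overline{v}$ against weak convergence of $\nabla u_{\delta}$), the Rellich and trace arguments for the lower-order and Dirichlet-to-Neumann terms, and the monotonicity of $C_{1}$ in $\min_{\Omega}(1/\cdot)$ all go through. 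Your strip modification to secure condition~2 of Theorem~\ref{estimate} for the mollified coefficients is also legitimate and in fact addresses a point that the paper's own appeal to Theorem~\ref{sequence} leaves implicit.
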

\begin{remark}
The generalized non-trapping condition means that $\varepsilon$ is monotonically increasing in the $x_{2}$-direction. We can also prove the same result for $\varepsilon$ monotonically decreasing in the $x_{2}$-direction. 
\end{remark}
\begin{proof}
Since $H_{qp}^{1}(\Omega)=\overline{C_{qp}^{\infty}(\Omega)}$ 
 where the closure is taken in the sense of $H^{1}(\Omega)$, given a $\xi >0$ we can choose a $u_{\xi} \in C_{qp}^{\infty}(\Omega)$ such that 
\begin{equation}
\normH{u-u_{\xi}}{\Omega}< \xi.
\end{equation}
We see that $\phi=\varepsilon$ satisfies the conditions of Theorem \ref{sequence}, and so we have a sequence of smooth and periodic functions $\phi_{\delta} \in C^{\infty} (\mathbb{R}^{2})$ such that $\normL{\phi_{\delta}-\varepsilon}{\Omega} \to 0$ as $\delta \to 0$. The $\phi_{\delta}$ also satisfy the non-trapping conditions of Theorem \ref{estimate}.  For each $\delta>0$, we consider the sesquilinear form $B_{\delta}(w,v)$ defined as in \eqref{p1} but with $\phi_{\delta}$ instead of $\varepsilon$. Then
\begin{equation}
B_{\delta}(w,v)=B_{\varepsilon}(w,v)-\int_{\Omega}\bigg( \frac{1}{\varepsilon}-\frac{1}{\phi_{\delta}}\bigg)\nabla w \cdot \nabla\overline{ v} 
\end{equation}
for all $w \in H_{qp}^{1}(\Omega)$ and $v \in H_{qp}^{1}(\Omega)$.
We also see that 
\begin{equation}
\label{form}
B_{\varepsilon}(u_{\xi},v)=F(v)-B_{\varepsilon}(u-u_{\xi},v)
\end{equation}
for all $v \in H_{qp}^{1}(\Omega)$. Combining the last two equalities with $w=u_{\xi}$, we have 
\begin{equation}
B_{\delta}(u_{\xi},v)=F(v)-B_{\varepsilon}(u-u_{\xi},v)-\int_{\Omega}\bigg(\frac{1}{\varepsilon}-\frac{1}{\phi_{\delta}} \bigg)\nabla u_{\xi}\cdot \nabla\overline{ v}
\end{equation}
for all $v \in H_{qp}^{1}(\Omega)$. 

Let $u$ and $u_{\xi}$ be given,   $u'\in H_{qp}^{1}(\Omega)$   be the solution of the variational problem
\begin{equation}
\label{Lax2}
B_{\delta}(u',v)=F(v)
\end{equation}
for all $v \in H_{qp}^{1}(\Omega)$, and $u'' \in H_{qp}^{1}(\Omega)$ be the solution of the variational problem
\begin{equation}
B_{\delta}(u'',v)=-B_{\varepsilon}(u-u_{\xi},v)-\int_{\Omega}\bigg(\frac{1}{\varepsilon}-\frac{1}{\phi_{\delta}} \bigg)\nabla u_{\xi}\cdot \nabla\overline{ v}
\end{equation}
for all $v \in H_{qp}^{1}(\Omega)$. 
It follows from Lemma \ref{dual} that the solutions $u'$ and $u''$ exist since the right   sides in the two variational problems are in the dual space $(H_{qp}^{1}(\Omega))^{'}$. We can choose a $\delta>0$ small enough so that 
\begin{align}
\nonumber
\normH{u''}{\Omega} &\leq C_{1}(\kappa,\phi_{\delta}) \sup_{\normH{v}{\Omega}=1}\frac{ \big|B_{\epsilon}(u-u_{\xi},v) +\int_{\Omega}\big(\frac{1}{\varepsilon}-\frac{1}{\phi_{\delta}} \big)\nabla u_{\xi}\cdot \nabla\overline{  v} \big|}{\normH{v}{\Omega}} \\
\nonumber
& \leq C_{1}(\kappa,\phi_{\delta}) \bigg[ \gamma \normH{u-u_{\xi}}{\Omega}+\normL{\bigg(\frac{1}{\varepsilon}-\frac{1}{\phi_{\delta}} \bigg)\nabla u_{\xi}}{\Omega} \bigg]\\
\nonumber
&\leq C_{1}(\kappa,\phi_{\delta}) \bigg[ \gamma \xi +\normLinf{\nabla u_{\xi}}{\Omega}\bigg(\frac{1}{\varepsilon_{\text{min}}} \bigg)^{2}\normL{\varepsilon-\phi_{\delta}}{\Omega} \bigg]\\
&\leq C_{1}(\kappa,\phi_{\delta}) (\gamma+1)\xi,
\end{align}
where $\gamma$ is the continuity constant of $B_{\varepsilon}(\cdot,\cdot)$. Finally we have
\begin{align}
\nonumber
\normH{u}{\Omega} &\leq \normH{u-u_{\xi}}{\Omega}+\normH{u_{\xi}}{\Omega}\\
\nonumber
&\leq \xi +\normH{u'}{\Omega}+\normH{u''}{\Omega}\\
&\leq \xi + C_{1}(\kappa,\phi_{\delta})\bigg(\normblank{F}{\big(H_{qp}^{1}(\Omega)\big)^{'}}+(\gamma+1)\xi \bigg)
\end{align}
 for all $\xi>0$.
To complete the proof, we recall that the $\phi_{\delta}$ are uniformly bounded and
that $C_{1}(\kappa,\phi_{\delta}) \leq C_{1}(\kappa,\varepsilon)$ follows from the definition of $C_{1}(\kappa,\varepsilon)$ for all $\delta>0$.
\qed
\end{proof}
\begin{remark}
\label{remark}
For $\varepsilon$ piecewise $C^{2}$ in $\mathbb{R}^{2}$ satisfying the non-trapping condition of Theorem \ref{duals}, it follows that $\varepsilon_{h}$ is piecewise $C^{2}$ in $\mathbb{R}^{2}$ and also satisfies the non-trapping conditions. Thus, the associated problem \eqref{p2} has a unique  $u^{h}$ as its solution. This follows because the same $\textit{a-priori}$ estimate holds for the problems (\ref{p1})
and (\ref{p2}), and the two continuity constants are $C_1(\kappa,\varepsilon)$ and $C_{1}(\kappa,\varepsilon_{h})$. The non-trapping conditions allow us to write $C_{1}(\kappa,\varepsilon)$ explicitly in terms of $\kappa$ and $\varepsilon$. This is important because $C_{1}(\kappa,\varepsilon_{h}) \leq C_{1}(\kappa,\varepsilon)$ for all $h>0$, which follows from this explicit dependence. Therefore, for  the solution of  the problem (\ref{p2}), we have an $\textit{a-priori}$ estimate where the continuity constant is independent of $h$. 
\end{remark}

\section{\textit{A-priori} bounds on the solution}
\label{norm}
To prove convergence we need two additional \textit{a-priori} bounds on the solution.
\begin{theorem}
\label{aest}
Let $\varepsilon$ be piecewise $C^{2}$ in $\mathbb{R}^{2}$, and satisfy the non-trapping condition of Theorem \ref{duals}. Let $u_{F} \in H_{qp}^{1}(\Omega)$ denote the solution of problem \eqref{p1} with $F \in L_{qp}^{2}(\Omega)$ on the right hand  side. Then there is a constant $C_{2}(\kappa,\varepsilon)>0$ and an index $s_{1}$, such that
\begin{equation}
\normHonesone{u_{F}}{\Omega} \leq C_{2}(\kappa,\varepsilon)\normL{F}{\Omega},
\end{equation}
where $s_{1} \in (0,1/2)$.
\end{theorem}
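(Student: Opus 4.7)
My plan is to combine the global $H^1$ a priori estimate from Theorem~\ref{duals} with interior elliptic regularity on each smooth subdomain and transmission/corner-singularity theory across the interfaces of the piecewise smooth permittivity. The starting point is $\normH{u_F}{\Omega}\le C_1(\kappa,\varepsilon)\normL{F}{\Omega}$ from Theorem~\ref{duals}, which controls the full $H^1$ norm in the lift-to-$H^{1+s_1}$ estimates that follow.

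On each subdomain $\Omega_k$ on which $\varepsilon\in C^2$, I would rewrite the PDE in non-divergence form as $\Delta u_F = \varepsilon\bigl(F-\kappa^2 u_F\bigr) - \varepsilon\,\nabla(\varepsilon^{-1})\cdot\nabla u_F$. Because $\varepsilon$ and $\nabla(\varepsilon^{-1})$ are bounded in $\Omega_k$, the right-hand side lies in $L^2_{loc}(\Omega_k)$, so classical interior elliptic regularity yields $u_F\in H^2_{loc}(\Omega_k)$ with a bound controlled by $\normL{F}{\Omega}+\normH{u_F}{\Omega}$. Across a $C^2$ piece of an interface $\Gamma_k$ the weak formulation supplies the natural transmission conditions $\jmp{u_F}_{\Gamma_k}=0$ and $\jmp{\varepsilon^{-1}\partial u_F/\partial\nu}_{\Gamma_k}=0$, and standard transmission regularity (for instance in the spirit of Petzoldt or Nicaise--Sändig) gives $u_F\in H^{1+s}_{loc}$ near the smooth portion of $\Gamma_k$ for any $s<1/2$. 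On $\Gamma_{\pm H}$ the DtN maps $T^{\pm}$ are pseudodifferential of order one with analytic symbols and the background relative permittivity is constant, so no regularity is lost there; the quasi-periodic sides $\Gamma_L$ and $\Gamma_R$ are effectively interior after periodic identification, so likewise contribute no singularity.

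The main obstacle is the behaviour at the corners of the interfaces, most notably the right-angle corners of the stairstep $\Gamma_1$ and any triple points where interfaces meet $\Gamma_{\pm H}$. Here I would invoke the standard Mellin/Kondrat'ev analysis of two-phase transmission problems at a corner (Kellogg; Nicaise; Costabel--Dauge--Nicaise): the worst singular exponent is the smallest positive real part of a root of a transcendental equation whose coefficients depend only on the opening angles and on the ratio of $\varepsilon$ on the two sides of the corner. Under our standing hypothesis that $\varepsilon$ is piecewise $C^2$, real, and bounded away from $0$, this analysis produces a single index $s_1\in(0,1/2)$, strictly positive, that controls every corner in the finite collection. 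The technical subtlety is keeping track of the fact that $s_1$ depends on $\varepsilon$ through these ratios but not on $F$, so that it absorbs into the constant $C_2(\kappa,\varepsilon)$.

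To finish, I would patch the estimates together by a standard covering argument: a finite open cover of $\overline{\Omega}$ adapted to the interface geometry, giving (i) fully interior patches with $H^2$ bounds, (ii) patches crossing a smooth part of an interface or one of the DtN boundaries with $H^{1+s}$ bounds for any $s<1/2$, and (iii) corner patches with $H^{1+s_1}$ bounds. A partition of unity and the global $H^1$ control from Theorem~\ref{duals} yield the claimed inequality $\normHonesone{u_F}{\Omega}\le C_2(\kappa,\varepsilon)\normL{F}{\Omega}$, with $C_2$ absorbing the local elliptic-regularity constants, the corner-singularity constant and $C_1(\kappa,\varepsilon)$.
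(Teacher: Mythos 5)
Your argument is essentially correct, but it is organized quite differently from the paper's. The paper does not localize at all: it extends $u_F$ quasi-periodically in $x_1$ and by the Rayleigh--Bloch expansion into the half-spaces $x_2>H$ and $x_2<-H$, multiplies by a cut-off $\chi$ vanishing on the boundary of a disk $\Omega_R\supset\Omega$, and observes that $w=\chi u_F^E$ solves a single homogeneous Dirichlet problem $\nabla\cdot(\varepsilon^{-1}\nabla w)=F^*$ on $\Omega_R$ with $F^*\in H^{s-1}(\Omega_R)$ for all $s\in[0,1/2)$. It then invokes, as a black box, a global shift theorem for elliptic operators with piecewise $W^{1,\infty}$ coefficients (Proposition~2.2 of Bernardi--Verf\"urth) to obtain $\normHonesone{w}{\Omega_R}\leq c\,\normblank{F^{*}}{H^{s_{1}-1}(\Omega_{R})}$, and finally bounds $F^*$ by $\normL{F}{\Omega}$ using the \textit{a-priori} estimate. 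Your proposal instead unpacks by hand what that cited proposition packages: interior $H^2$ regularity, transmission regularity of order $H^{3/2-\epsilon}$ across smooth interface pieces, Mellin/Kondrat'ev corner analysis, and a partition-of-unity patching argument. The two routes rest on the same underlying regularity theory; yours is more self-contained but carries the burden of verifying that every corner exponent is uniformly bounded below, while the paper's extension-plus-cut-off reduction sidesteps the artificial boundary $\Gamma_{\pm H}$ entirely (the extended solution satisfies a constant-coefficient Helmholtz equation there, so no pseudodifferential argument about $T^{\pm}$ is needed) and delegates the interface and corner difficulties to a single citation. One practical advantage of the paper's reduction worth noting is that it converts the quasi-periodic and DtN boundary conditions into a standard Dirichlet problem on a smooth domain, which is exactly the setting in which the cited regularity results are stated; if you pursue your localized version you must check that the transmission and corner results you cite apply near the points where interfaces meet the (identified) lateral boundaries.
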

\begin{proof}
We extend the domain $\Omega$ by $\ell$ periods on the left and right, and then above and below by including the infinite half-spaces where $x_{2}>H$ and $x_{2}<-H$. This extended domain is then defined as
\begin{equation}
\Omega^{E}=\{\bm{x} \in \mathbb{R}^{2},\ -\ell \Lx< x_{1} <(\ell+1)\Lx \}.
\end{equation}
As it is also useful to define a circular restricted domain,   we choose an $R>0$ such that
\begin{equation}
\Omega_{R} = \{\bm{x} \in \mathbb{R}^{2}, \big|\bm{x}-(\Lx/2,0) \big|<R \}
\end{equation}
satisfies the set inclusion $\Omega \subset \Omega_{R} \subset \Omega^{E}$. The right   hand side $F$ is extended to $\Omega_{R}$ by quasi-periodicity in $x_{1}$ and by zero above and below. We can also extend the solution $u_{F}$ to the domain $\Omega^{E}$ by quasi-periodicity to the left and right in $x_{1}$, and using the Rayleigh--Bloch expansion  \eqref{Rayleigh} above and below, to obtain $u_{F}^{E} \in H^{1}_{qp}(\Omega^{E})$. 

Let $\chi$ be a smooth cut-off function such that $\chi=1$ in $\Omega$, $\chi=0$ on $\partial \Omega_{R}$ and $|\nabla \chi|<1$ in $\Omega_{R}$. We consider $w=\chi u_{F}^{E}$, and notice immediately that $w=u$ in $\Omega$ and $w=0$ on $\partial \Omega_{R}$. Then $w \in H^{1}(\Omega_{R})$ solves the elliptic problem
\begin{equation}
\label{ellipticproblem}
\left.\begin{array}{ll}
\nabla \cdot \bigg(\frac{1}{\varepsilon}\nabla w \bigg) = F^{*} &\hspace{7 mm} \text{in} \ \Omega_{R}
\\[5pt]
w = 0 &\hspace{7 mm} \text{on} \ \partial \Omega_{R}
\end{array}\right\},
\end{equation}
where the source function $F^{*} \in H^{s-1}(\Omega_{R})$ for all $s \in [0,1/2)$.
To show that this is true, we let $\xi \in H^{1}(\Omega_{R})$ and consider 
\begin{align}
\nonumber
\int_{\Omega_{R}}\nabla \cdot \bigg(\frac{1}{\varepsilon}\nabla w \bigg) \overline{\xi}&=-\int_{\Omega_{R}}\frac{1}{\varepsilon}u_{F}^{E} \nabla \chi \cdot \nabla\overline{ \xi}-\int_{\Omega_{R}}\frac{1}{\varepsilon}\nabla u_{F}^{E} \cdot \nabla(\chi \overline{\xi})\\
\label{eqlist}
&+\int_{\Omega_{R}}\frac{1}{\varepsilon}\nabla u_{F}^{E} \cdot \nabla (\chi) \overline{\xi},
\end{align}
which can be obtained by the divergence theorem and adding and subtracting terms. The first term on the right   side of \eqref{eqlist} can be rewritten for all $\xi \in H^{1}(\Omega_{R})$ as follows:
\begin{equation}
-\int_{\Omega_{R}}\frac{1}{\varepsilon}u_{F}^{E} \nabla \chi \cdot \nabla\overline{ \xi}=\int_{\Omega_{R}}\nabla \cdot \bigg(\frac{1}{\varepsilon}u_{F}^{E} \nabla \chi \bigg) \overline{\xi}.
\end{equation}
Since $\nabla \cdot \bigg(\frac{1}{\varepsilon}\nabla u_{F}^{E} \bigg)=F^{E}-\kappa^{2}u_{F}^{E}$ in $\Omega_{R}$, we find using the divergence theorem that the second term on the right side
of (\ref{eqlist}) may be rewritten as
\begin{equation}
-\int_{\Omega_{R}}\frac{1}{\varepsilon}\nabla u_{F}^{E} \cdot \nabla (\chi \overline{\xi}) = \int_{\Omega_{R}}F^{E}\overline{\xi}\chi - \int_{\Omega_{R}}\kappa^{2} u_{F}^{E} \overline{\xi} \chi,
\end{equation}
for all $\xi \in H^{1}(\Omega_{R})$. The boundary integrals on $\partial \Omega_{R}$ cancel because $\chi \overline{\xi}=0$ on $\partial\Omega_{R}$. Hence, \eqref{eqlist} simplifies to
\begin{equation}
\label{rhs}
\int_{\Omega_{R}}\nabla \cdot \bigg(\frac{1}{\varepsilon}\nabla w \bigg)\overline{\xi}=\int_{\Omega_{R}} \bigg [\nabla \cdot \bigg(\frac{1}{\varepsilon}u_{F}^{E}\nabla \chi \bigg) +F^{E}\chi -\kappa^{2}u_{F}^{E}\chi +\frac{1}{\varepsilon}\nabla u_{F}^{E} \cdot \nabla \chi \bigg]\overline{\xi}
\end{equation}
for all $\xi \in H^{1}(\Omega_{R})$. Then $F^{*}$ is given by the terms enclosed in the square bracket on the right  side of \eqref{rhs}, and we can write $F^{*}=\nabla \cdot \big(\frac{1}{\varepsilon}u_{F}^{E}\nabla \chi \big)+\hat{F}$ where $\hat{F} \in L^{2}(\Omega_{R})$. Since $\varepsilon^{-1}$ is piecewise $C^{2}$, 
$\varepsilon^{-1}$ satisfies the conditions of Proposition 2.1 of Ref.~\cite{Bonito}, i.e., $\varepsilon^{-1} \in L^{\infty}(\Omega_{R})$ and $\nabla \varepsilon^{-1}$ is piecewise $L^{\infty}(\Omega_{R})$. Since the product $u_{F}^{E}\nabla \chi \in H^{s}(\Omega_{R})$,   $\nabla \cdot \big(\frac{1}{\varepsilon}u_{F}^{E}\nabla \chi \big) \in H^{s-1}(\Omega_{R})$ for every $s \in [0,1/2)$. This shows that $w$ solves the elliptic problem given in \eqref{ellipticproblem}, and so we apply Proposition 2.2 of Ref.~\cite{Bernardi} to this problem. It follows that there is a constant $c>0$ and an index $s_{1} \in (0,1/2)$ such that
\begin{align*}
\normHonesone{w}{\Omega_{R}}&\leq c \normblank{F^{*}}{H^{s_{1}-1}(\Omega_{R})} \\
&\leq c \big(\normL{ \frac{1}{\varepsilon}u_{F}^{E}\nabla \chi }{\Omega_{R}} +\normL{\hat{F}}{\Omega_{R}}\big)\\
&\leq c(2 \ell +1) \bigg[1+C_{1}(\kappa,\varepsilon)(1+C(\kappa))(\kappa^{2}+2\normLinf{\varepsilon^{-1}}{\Omega})\bigg]  \normL{F}{\Omega}.
\end{align*}
This follows by repeated use the \textit{a-priori} estimate for $u_{F}$ and 
Theorem~3 of Ref.~\cite{CivilettiMonk}.  To complete the proof, we note that $\normHonesone{u_{F}}{\Omega} \leq \normHonesone{w}{\Omega_{R}}$. \qed
\end{proof}

Using the previous theorem we have the following regularity result for $u$.
\begin{corollary}
\label{ah}
Suppose $\varepsilon$ is piecewise $C^{2}$ in $\mathbb{R}^{2}$ and satisfies the non-trapping condition of Theorem \ref{duals}. Let $u \in H_{qp}^{1}(\Omega)$ be the solution to problem \eqref{p1} where the right hand side is $f=\nabla \cdot \big[(\varepsilon_{+}^{-1}-\varepsilon^{-1})\nabla u^{\text{inc}} \big]$. Then 
\begin{equation}
\normblank{u}{H^{1+s_{1}}(\Omega)}\leq C_{2}(\kappa,\varepsilon) \normL{(\varepsilon^{-1}_{\pm}\Delta+\kappa^{2})(\chi u^{inc})}{\Omega_{\delta}}+\normblank{u^{inc}}{H^{1+s_{1}}(\Omega_{\delta})},
\end{equation}
for some $s_{1}\in(0,1/2)$,  an  appropriately chosen domain $\Omega_{\delta}$, and 
a smooth cut-off function $\chi$.
\end{corollary}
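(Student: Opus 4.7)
My plan is to reduce Corollary~\ref{ah} to Theorem~\ref{aest} by constructing a modified scattered field whose Helmholtz source lies in $L^2(\Omega)$. The obstruction to applying Theorem~\ref{aest} directly is that the source $f=\nabla\cdot[(\varepsilon_+^{-1}-\varepsilon^{-1})\nabla u^{\text{inc}}]$ carries delta-function contributions along each material interface of $\varepsilon$, so in general $f\notin L^2(\Omega)$. The idea is to cancel the singular part of $f$ by adding a cutoff multiple of the smooth incident field. Concretely, I would choose a smooth cutoff $\chi$ and a domain $\Omega_\delta$ so that $\chi\equiv 1$ on an open neighborhood of every interface of $\varepsilon$ inside $\Omega$ (in particular on the entire grating region) and $\chi\equiv 0$ on thin slabs adjacent to $\Gamma_{\pm H}$. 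This is feasible because the hypothesis $\varepsilon\equiv\varepsilon_\pm$ for $\pm x_2>H$ guarantees that the jumps of $\varepsilon$ are strictly separated from $\Gamma_{\pm H}$. Set $w:=u+\chi u^{\text{inc}}$; since $\chi$ vanishes near $\Gamma_{\pm H}$, $w$ inherits the quasi-periodic boundary data and the Rayleigh--Bloch radiation condition from $u$, so $w\in H_{qp}^{1}(\Omega)$.

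The key algebraic observation is the identity $L_{\varepsilon}u^{\text{inc}}=-f$, where $L_{\varepsilon}v:=\nabla\cdot(\varepsilon^{-1}\nabla v)+\kappa^2 v$; this follows at once from $\Delta u^{\text{inc}}+\kappa^2\varepsilon_+ u^{\text{inc}}=0$ combined with definition \eqref{AL-eq10}. Consequently,
\begin{equation*}
 L_{\varepsilon}w=f+L_{\varepsilon}(\chi u^{\text{inc}})=-L_{\varepsilon}\bigl((1-\chi)u^{\text{inc}}\bigr).
\end{equation*}
Because $1-\chi$ is supported only where $\varepsilon$ takes the constant value $\varepsilon_+$ (upper strip) or $\varepsilon_-$ (lower strip), on its support $L_{\varepsilon}$ collapses to $\varepsilon_\pm^{-1}\Delta+\kappa^2$, and we may rewrite
\begin{equation*}
 L_{\varepsilon}w=(\varepsilon_\pm^{-1}\Delta+\kappa^2)(\chi u^{\text{inc}})-(\varepsilon_\pm^{-1}\Delta+\kappa^2)u^{\text{inc}}.
\end{equation*}
The second term vanishes in the upper strip, because $u^{\text{inc}}$ solves Helmholtz with coefficient $\varepsilon_+$, and reduces to a bounded smooth multiple of $u^{\text{inc}}$ in the lower strip. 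Hence $F:=-L_{\varepsilon}w\in L^2(\Omega)$, and $w$ solves the variational problem \eqref{p1} with an $L^2$ right-hand side.

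Applying Theorem~\ref{aest} to $w$ now yields $\|w\|_{H^{1+s_1}(\Omega)}\le C_{2}(\kappa,\varepsilon)\|F\|_{L^2(\Omega)}$. Since $u=w-\chi u^{\text{inc}}$, the triangle inequality together with the elementary bound $\|\chi u^{\text{inc}}\|_{H^{1+s_1}(\Omega)}\le\|u^{\text{inc}}\|_{H^{1+s_1}(\Omega_\delta)}$ then produces the claimed estimate, after absorbing the bounded smooth $u^{\text{inc}}$ contribution from the bottom strip into $\|u^{\text{inc}}\|_{H^{1+s_1}(\Omega_\delta)}$. The main obstacle is the careful bookkeeping needed to make the specific form of $F$ match the right-hand side claimed in the corollary: one must choose $\chi$ and $\Omega_\delta$ so that $\|(\varepsilon_\pm^{-1}\Delta+\kappa^2)(\chi u^{\text{inc}})\|_{L^2(\Omega_\delta)}$ together with $\|u^{\text{inc}}\|_{H^{1+s_1}(\Omega_\delta)}$ simultaneously dominates $\|F\|_{L^2(\Omega)}$ and $\|\chi u^{\text{inc}}\|_{H^{1+s_1}(\Omega)}$.
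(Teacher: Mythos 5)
Your proposal is correct and uses essentially the same mechanism as the paper: both add the corrector $\chi u^{\text{inc}}$ to $u$ so that the identity $\nabla\cdot(\varepsilon^{-1}\nabla u^{\text{inc}})+\kappa^{2}u^{\text{inc}}=-f$ cancels the distributional part of the source on the material interfaces, leaving an $L^{2}$ right-hand side to which Theorem~\ref{aest} and the triangle inequality apply. The only (harmless) difference is geometric: the paper lets $\chi$ transition in the extension $\Omega_{\delta}\setminus\Omega$, where $\varepsilon\equiv\varepsilon_{\pm}$ is guaranteed, whereas you transition inside $\Omega$ near $\Gamma_{\pm H}$, which additionally presupposes that the inhomogeneity is separated from $\Gamma_{\pm H}$ by a positive distance --- always arrangeable by enlarging $H$.
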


\begin{remark}
As $f=\nabla \cdot \big[(\varepsilon_{+}^{-1}-\varepsilon^{-1})\nabla u^{\text{inc}} \big] \in (H^{1}_{qp}(\Omega))^{'}$ is singular at first sight,   we might only expect $u \in H^{1}_{qp}(\Omega)$. However, the special form of the solution allows for some extra regularity.
\end{remark}
\begin{proof}
By virtue of Theorem \ref{duals}, we know that $u\in H^{1}(\Omega)$ exists and is unique. Given a $\delta>0$, we define an extended domain 
\begin{equation}
\Omega_{\delta}=\{\bm{x}\in \mathbb{R}^{2} , \ 0<x_{1}<\Lx, -H-\delta<x_{2}<H+\delta \}.
\end{equation} 
The top and bottom boundaries of $\Omega_{\delta}$ are $\Gamma_{H+\delta}$ and $\Gamma_{-H-\delta}$, respectively. The smooth cut-off function $\chi$ is defined so that $\chi=1$ in $\Omega$ and $\chi=0$ on $\Gamma_{H+\delta}$ and $\Gamma_{-H-\delta}$. We recall that the total field $u^{t}=u+u^{\rm inc}$ 
and define $\tilde{w}=\chi u^{t}+(1-\chi)u$. Now by definition, $\tilde{w}=u^{t}$ in $\Omega$, and 
\begin{equation}
\label{AL-eq75}
\nabla \cdot \bigg(\frac{1}{\varepsilon}\nabla \tilde{w} \bigg)+\kappa^{2}\tilde{w}=\big(\varepsilon^{-1}_{\pm}\Delta+\kappa^{2}\big)(\chi u^{\rm inc})
\end{equation}
in $\Omega_{\delta}$. As the right   side of (\ref{AL-eq75}) is in $L^{2}(\Omega_{\delta})$,  we apply Theorem \ref{aest} and find a constant $C_{2}(\kappa,\varepsilon)>0$ such that \begin{equation}
\normHonesone{u^{t}}{\Omega} \leq \normHonesone{\tilde{w}}{\Omega_{\delta}} \leq C_{2}(\kappa,\varepsilon)\normL{\big(\varepsilon^{-1}_{\pm}\Delta+\kappa^{2}\big)(\chi u^{\rm inc})}{\Omega_{\delta}}.
\end{equation}
The proof follows by the triangle inequality. \qed
\end{proof}

\section{Convergence of   RCWA   in $h$}
\label{thickness}
We can now prove convergence of the solution $u^h$ of the perturbed problem (see (\ref{p2})):
\begin{theorem}
\label{slice}
Let $\varepsilon$ be piecewise $C^{2}$ in $\mathbb{R}^{2}$ be real with $\Re(\varepsilon)>0$ and satisfy the non-trapping condition of Theorem \ref{duals}. Suppose that the interfaces are the graphs of piecewise $C^{2}$ functions.  Let $u \in H_{qp}^{1}(\Omega)$ be the solution of problem \eqref{p1} with $f=\nabla \cdot \big[(\varepsilon_{+}^{-1}-\varepsilon^{-1})\nabla u^{\text{inc}} \big]$ on the right   side. Also let $u^{h} \in H_{qp}^{1}(\Omega)$ be the solution of problem \eqref{p2} with $f=\nabla \cdot \big[(\varepsilon_{+}^{-1}-\varepsilon_{h}^{-1})\nabla u^{\text{inc}} \big]$ on the right   side. Then there is a constant $C>0$ independent of $h>0$ such that 
\begin{equation}
\normH{u-u^{h}}{\Omega}\leq C h^{s_{1}/2},
\end{equation}
where $ s_{1}\in(0,1)$ is related to the regularity of $u$, i.e., $u \in H^{1+s_{1}}(\Omega)$.
\end{theorem}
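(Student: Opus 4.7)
My plan is to work with the total-field formulation (\ref{tvari}) rather than the scattered-field formulation, exploiting the fact that its right-hand side depends only on $\varepsilon_{+}$ and $u^{\mathrm{inc}}$ and hence is identical for the two problems. Writing $u^{t}=u+u^{\mathrm{inc}}$ and $u^{t,h}=u^{h}+u^{\mathrm{inc}}$, subtraction yields the clean error equation
\begin{equation*}
B_{\varepsilon_{h}}(u^{t}-u^{t,h},v)=\int_{\Omega}\bigg(\frac{1}{\varepsilon_{h}}-\frac{1}{\varepsilon}\bigg)\nabla u^{t}\cdot\nabla\overline{v},\qquad v\in H^{1}_{qp}(\Omega),
\end{equation*}
so that $u-u^{h}=u^{t}-u^{t,h}$ solves a problem with coefficient $\varepsilon_{h}$ and a right-hand side lying in $(H^{1}_{qp}(\Omega))'$. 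I would then apply Theorem~\ref{duals} to this equation; Remark~\ref{remark} guarantees that $C_{1}(\kappa,\varepsilon_{h})\le C_{1}(\kappa,\varepsilon)$ uniformly in $h$, so the task reduces to bounding the dual norm of the functional $v\mapsto\int_{\Omega}(\varepsilon_{h}^{-1}-\varepsilon^{-1})\nabla u^{t}\cdot\nabla\overline{v}$ by $Ch^{s_{1}/2}$.

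To bound this functional, I would decompose $\Omega=G\cup B$, where $G$ consists of points $(x_{1},x_{2})$ in each slice $S_{j}$ such that the vertical segment joining the midline point $(x_{1},h_{j-\frac{1}{2}})$ to $(x_{1},x_{2})$ crosses no material interface, and $B$ is the complementary ``bad'' set. On $G$, $\varepsilon^{-1}$ is $C^{1}$ along the $x_{2}$-direction, and Taylor's theorem gives $|\varepsilon^{-1}-\varepsilon_{h}^{-1}|\le Ch$; combined with Cauchy--Schwarz and the basic bound $\normH{u^{t}}{\Omega}\le C$, this contributes only an $O(h)$ term. Because each interface is the graph $x_{2}=f_{k}(x_{1})$ of a piecewise $C^{2}$ function, its contribution to $B$ in slice $S_{j}$ lies in a thin strip of vertical extent $\le h$; summing the areas over $j$ (each $x_{1}$ enters at most one slice via $f_{k}$) and over the finite number of interfaces gives $|B|=O(h)$.

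On $B$ the difference $\varepsilon^{-1}-\varepsilon_{h}^{-1}$ is only $O(1)$, so the rate must come from higher regularity of $u^{t}$. Corollary~\ref{ah} gives $u^{t}\in H^{1+s_{1}}(\Omega)$, i.e.\ $\nabla u^{t}\in H^{s_{1}}(\Omega)^{2}$, and the two-dimensional Sobolev embedding $H^{s_{1}}(\Omega)\hookrightarrow L^{2/(1-s_{1})}(\Omega)$ together with H\"older's inequality on $B$ yields
\begin{equation*}
\normL{\nabla u^{t}}{B}\le |B|^{s_{1}/2}\normLp{\nabla u^{t}}{\Omega}{2/(1-s_{1})}\le Ch^{s_{1}/2}\normblank{u^{t}}{H^{1+s_{1}}(\Omega)}.
\end{equation*}
Pairing this with $\normL{\nabla v}{B}\le\normH{v}{\Omega}$ bounds the $B$-piece of the functional by $Ch^{s_{1}/2}\normH{v}{\Omega}$, which dominates the $O(h)$ contribution from $G$ for small $h$. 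Taking the supremum over $v$ with $\normH{v}{\Omega}=1$ and applying Theorem~\ref{duals} delivers $\normH{u-u^{h}}{\Omega}\le Ch^{s_{1}/2}$.

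The main obstacle is the balance in the bad region: the coefficient perturbation fails to vanish there, so the rate has to be bought entirely from the fractional regularity of $u^{t}$ against the measure $|B|=O(h)$. This is precisely where the $p$-polarization case is more delicate than the $s$-polarization case of \cite{CivilettiMonk}, since the perturbation sits in the principal part of the operator and $u^{t}$ is only $H^{1+s_{1}}$ with $s_{1}<1/2$. Identifying the correct decomposition $\Omega=G\cup B$, estimating $|B|$ via the graph structure of the interfaces, and invoking the critical 2D Sobolev embedding at exponent $2/(1-s_{1})$ are the technical pieces that make the exponent $s_{1}/2$ come out.
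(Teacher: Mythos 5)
Your proposal is correct and follows essentially the same route as the paper: the total-field error equation, Theorem~\ref{duals} with the $h$-uniform constant $C_{1}(\kappa,\varepsilon_{h})\le C_{1}(\kappa,\varepsilon)$, and the Sobolev embedding $H^{s_{1}}(\Omega)\hookrightarrow L^{2/(1-s_{1})}(\Omega)$ paired against the smallness of $\varepsilon-\varepsilon_{h}$. The only difference is presentational: where the paper applies H\"older globally with exponents $2/s_{1}$ and $2/(1-s_{1})$ and cites Lemma~6 of \cite{CivilettiMonk} for $\normLp{\varepsilon-\varepsilon_{h}}{\Omega}{2/s_{1}}\le ch^{s_{1}/2}$, you re-derive that bound inline via the good/bad-set decomposition, which is exactly the content of the cited lemma.
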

\begin{proof}
Since $\varepsilon$ is periodic and piecewise $C^{2}$ in $\mathbb{R}^{2}$, both $\varepsilon$ and $\varepsilon_{h}$ are periodic and are in $L^{\infty}(\mathbb{R}^{2})$. By the definition of $\varepsilon_{h}$, we can write $\varepsilon_{h}(\bm{x})=(\varepsilon_{h})_{\text{min}}+\Pi(\bm{x})$ where $\Pi(\bm{x})=\varepsilon(x_{1},h_{j-\frac{1}{2}})-(\varepsilon_{h})_{\text{min}}$ in slice $S_{j}$. We also write $\varepsilon(\bm{x})=\varepsilon_{\text{min}}+\tilde{\Pi}(\bm{x})$. Then given a $\tau\geq 0$ there is an integer $n \geq 0$ such that
\begin{align}
\Pi(\bm{x}+\tau\bm{e}_{2})-\Pi(\bm{x})&=\varepsilon(x_{1},h_{j-\frac{1}{2}}+nh)-\varepsilon(x_{1},h_{j-\frac{1}{2}}) \nonumber\\
&= \tilde{\Pi}(\bm{x}_{h}+nh\bm{e}_{2})-\tilde{\Pi}(\bm{x}_{h}) \nonumber \\
&\geq \inf_{\bm{x} \in \Omega} \bigg[\tilde{\Pi}(\bm{x}+nh\bm{e}_{2})-\tilde{\Pi}(\bm{x}) \bigg].
\end{align}
Since $\varepsilon$ is monotonically increasing in the $x_{2}$-direction, it follows that 
\begin{equation}
\inf_{\bm{x} \in \Omega} \bigg[\Pi(\bm{x}+\tau\bm{e}_{2})-\Pi(\bm{x}) \bigg]\geq 0.
\end{equation}
Thus, $\varepsilon_{h}$ satisfies the non-trapping conditions of Theorem \ref{duals}.  
We notice that
\begin{equation}
\label{AL-eq80}
B_{\varepsilon_{h}}(u^{t}-u^{h,t},v)=\int_{\Omega}\bigg(\frac{1}{\varepsilon_{h}}-\frac{1}{\varepsilon} \bigg)\nabla u^{t} \cdot \nabla\overline{  v}
\end{equation}
for all $v \in H_{qp}^{1}(\Omega)$, where $u^{h,t}=u^{h}+u^{\text{inc}}$. As the right   side is in the dual space $(H_{qp}^{1}(\Omega))^{'}$,   we apply Theorem \ref{duals} to \eqref{AL-eq80} to see that
\begin{align}
\label{hconv}
\normH{u^{t}-u^{h,t}}{\Omega}& \leq C_{1}(\kappa,\varepsilon_{h}) \sup_{\normH{v}{\Omega}=1} \frac{\big|\int_{\Omega}(\varepsilon_{h}^{-1}-\varepsilon^{-1}) \nabla u^{t} \cdot \nabla \overline{v} \big|}{\normH{v}{\Omega}}\nonumber\\
&\leq C_{1}(\kappa,\varepsilon_{h})\normL{\bigg(\frac{1}{\varepsilon_{h}}-\frac{1}{\varepsilon} \bigg)\nabla u^{t}}{\Omega} \nonumber\\
&\leq C_{1}(\kappa,\varepsilon_{h})\normLinf{(\varepsilon_{h}\varepsilon)^{-1}}{\Omega}\normLp{\varepsilon-\varepsilon_{h}}{\Omega}{2p}\normLp{\nabla u^{t}}{\Omega}{2q},
\end{align}
where $(1/p)+(1/q)=1$, which follows from H{\"{o}}lder's inequality. Using the Sobolev embedding theorem (cf. Theorem~6 of Ref.~\cite[Sec.~5.6.3]{evans}), we choose 
\begin{equation}
\frac{1}{2q}=\frac{1}{2}-\frac{s_{1}}{2},
\end{equation}
which implies that there is a constant $C>0$ independent of $h>0$ such that 
\begin{equation}
\normLp{\nabla u^{t}}{\Omega}{2q} \leq C \normHonesone{u^{t}}{\Omega}. 
\end{equation} 
Then $2p={2}/{s_{1}}$, $2q={2}/{(1-s_{1})}$, and
\begin{equation}
\normH{u^{t}-u^{h,t}}{\Omega}\leq C C_{1}(\kappa,\varepsilon_{h}) \bigg(\frac{1}{\min_{\Omega}\varepsilon} \bigg)^{2} \normLp{\varepsilon-\varepsilon_{h}}{\Omega}{2/s_{1}}\normHonesone{u^{t}}{\Omega}.
\end{equation}
Using  Lemma 6 of Ref.~\cite{CivilettiMonk}, we know that there is a constant $c>0$ independent of $h$ such that 
\begin{equation}
\normLp{\varepsilon-\varepsilon_{h}}{\Omega}{2/s_{1}} \leq c h^{s_{1}/2}.
\end{equation}  
To complete the proof, we recall that $C_{1}(\kappa,\varepsilon_{h})\leq C_{1}(\kappa,\varepsilon)$ for all $h>0$, and $u^{t}-u^{h,t}=u-u^{h}$.
\qed
\end{proof}

\section{RCWA   as a Galerkin scheme}
\label{Galerkin}
Let us now turn our attention to the  error arising from the truncation of all Fourier series. From now on, we denote the RCWA solution by
\begin{equation}
\label{RCWA}
u^{h,M}(x_{1},x_{2})=\sum_{n=-M}^{M} u_{n}^{h,M}(x_{2})\exp(i \alpha_{n}x_{1}),
\end{equation} 
for $\bm{x} \in \Omega$,   the number of retained Fourier modes being $2M+1$. This definition makes sense given that the solution $u$ to the continuous problem  can be written as  \cite{Yakubovich}
\begin{equation}
\label{actual}
u(x_{1},x_{2})=\sum_{n\in\mathbb{Z}} u_{n}(x_{2})\exp(i \alpha_{n}x_{1})
\end{equation}
with coefficient functions $u_{n}(x_{2})$.

Exactly $2M+1$ coefficients have to be ascertained in each   slice $S_{j}$, and the solution 
in the entire domain is reconstructed using \eqref{RCWA}. Therefore, we must determine that the RCWA  converges as $M \to \infty$, and to what order. To do this, we first show that the RCWA is actually a Galerkin scheme; in other words, it solves the variational problem \eqref{p2} where the test functions are in an appropriately chosen finite-dimensional subspace of $H_{qp}^{1}(\Omega)$. We therefore define the space 
\[
V_{M}=H^{1}(-H,H) \otimes E_{M} 
\]
where $E_{M}=\text{span} \{\exp(i \alpha_{n} x_{1}) \ : \ -M\leq n \leq M \}.$

There is some ambiguity in how to formulate the RCWA for $p$-polarized incident light \cite{Tom}. In particular, the Fourier coefficients solve a second-order ODE in each slice, but this ODE can be formulated in several different ways. These formulations and their  numerical convergence properties were discussed by Li~\cite{Li}. In this paper we only consider one of these formulations, i.e., the one that is equivalent to the variational problem.

Let us briefly describe the RCWA, in order to show that it is actually a variational method. In this section we no longer use a general source $F \in (H_{qp}^{1}(\Omega))^{'}$, but instead the incident field is the plane wave  \eqref{incident}.  
Rayleigh--Bloch expansions of the unknown reflected field $u^{\text{ref}}(\bm{x})$, the unknown transmitted field $u^{\text{tr}}(\bm{x})$, and
 the known incident field are used. Thus,  
the incident field is represented in RCWA as
\begin{equation}
u^{\text{inc}}(x_{1},x_{2})=\sum_{n=-M}^{M} u_{n}^{\text{inc}} \exp[-i\betanplus (x_2- H)]
 \exp(i \alpha_{n}x_{1}),\quad
 x_2>H,
\end{equation}
with $u_{n}^{\text{inc}} =0$~$\forall n\ne0$ and $u_{0}^{\text{inc}} =1$;
the reflected field as
\begin{equation}
\label{AL-eq91}
u^{\text{ref}}(x_{1},x_{2})=\sum_{n=-M}^{M} u_{n}^{\text{ref}}
\exp[i\betanplus (x_2-H)]\exp(i \alpha_{n}x_{1}),\quad
 x_2>H;
\end{equation}
and the transmitted field as
\begin{equation}
\label{AL-eq92}
u^{\text{tr}}(x_{1},x_{2})=\sum_{n=-M}^{M} u_{n}^{\text{tr}} \exp[-i\betanminus(x_2+H)]
\exp(i \alpha_{n}x_{1}),\quad
x_2<-H.
\end{equation}
The $2(2M+1)$   coefficients $\left\{u_{n}^{\text{ref}}\right\}_{n=-M}^{n=M}$ and $\left\{u_{n}^{\text{tr}}\right\}_{n=-M}^{n=M}$
 have to be determined.

\begin{theorem}
\label{Galerkin-theorem}
The RCWA solution $u^{h,M} \in V_{M}$ solves the variational problem 
\begin{equation}
\label{p3}
B_{\varepsilon_{h}}(u^{h,M},v_{M})=f(v_{M}),
\end{equation}
for all $v_{M} \in V_{M}$,
where $$f(v_{M})=-\int_{\Omega}\nabla \cdot \big[ (\varepsilon_{+}^{-1}-\varepsilon_{h}^{-1})\nabla u^{\text{inc}} \big] \overline{v_{M}}.$$
\end{theorem}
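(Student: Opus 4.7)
My plan is to verify directly that the RCWA-constructed $u^{h,M}$ satisfies the stated variational identity against every $v_M\in V_M$. First, I would record the algorithmic conditions that define $u^{h,M}$: in each slice $S_j$, where $\varepsilon_h$ depends only on $x_1$, the truncated coefficients $\{u_n^{h,M}(x_2)\}_{|n|\le M}$ solve exactly the coupled second-order ODE system obtained by inserting the ansatz (\ref{RCWA}) into the Helmholtz equation (\ref{eqn:PDE}) with $\varepsilon_h$ in place of $\varepsilon$; across every inter-slice boundary $x_2=h_j$ the algorithm enforces continuity of each $u_n^{h,M}$ and of $\varepsilon_h^{-1}\partial_{x_2}u_n^{h,M}$; and above $\Gamma_H$ (resp.\ below $\Gamma_{-H}$) the coefficients are matched to the outgoing Rayleigh--Bloch expansions (\ref{AL-eq91})--(\ref{AL-eq92}), which is exactly the condition encoded by $T^{+}$ (resp.\ $T^{-}$).

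Next, for an arbitrary $v_M=\sum_{n=-M}^{M} v_n(x_2)\exp(i\alpha_n x_1)\in V_M$, I would decompose the volume integrals in $B_{\varepsilon_h}(u^{h,M},v_M)$ as $\sum_j\int_{S_j}$ and integrate by parts in $x_2$ within each slice. Quasi-periodicity annihilates contributions on $\Gamma_L$ and $\Gamma_R$ exactly as in the derivation of (\ref{vari14}). Fourier orthogonality in $x_1$ reduces the remaining volume integrand to the weak form of the very ODE system that the RCWA solves exactly, so after subtracting the slice-wise source produced by $u^{\text{inc}}$ the volume residual vanishes. The interior interface terms $\jmp{\varepsilon_h^{-1}\partial_{x_2} u^{h,M}}\,\overline{v_M}$ at each $h_j$ are killed by the inter-slice matching conditions. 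On $\Gamma_{\pm H}$, integration by parts leaves $\int_{\Gamma_{\pm H}}\varepsilon_h^{-1}\partial_{x_2}u^{h,M}\,\overline{v_M}$, and I would identify this with $\int_{\Gamma_{\pm H}}\overline{v_M}\,T^{\pm}(u^{h,M})$ using the Rayleigh--Bloch matching (noting that $\varepsilon_h=\varepsilon_{\pm}$ in the slices abutting $\Gamma_{\pm H}$).

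To close the source side, I would pass to the total field $u^{h,M,t}=u^{h,M}+u^{\text{inc}}$ and verify first the total-field identity analogous to (\ref{tvari}) by the same slice-by-slice argument, then write $B_{\varepsilon_h}(u^{h,M},v_M)=B_{\varepsilon_h}(u^{h,M,t},v_M)-B_{\varepsilon_h}(u^{\text{inc}},v_M)$. The second term is computed using $\nabla\cdot(\varepsilon_+^{-1}\nabla u^{\text{inc}})+\kappa^2 u^{\text{inc}}=0$ and the DtN definitions; the residual pieces reorganize into $-\int_\Omega \nabla\cdot[(\varepsilon_+^{-1}-\varepsilon_h^{-1})\nabla u^{\text{inc}}]\,\overline{v_M}=f(v_M)$. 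The main obstacle is the careful bookkeeping at $\Gamma_{\pm H}$: one must confirm that the algorithmic Rayleigh--Bloch matching is literally the action of $T^{\pm}$ with the correct signs, and that the distributional right-hand side $f$ duality-pairs with $v_M$ in the way the RCWA prescribes the incident-field data. The interior slice-by-slice cancellations and the $x_1$-periodicity identities are routine; identifying the radiation matching with the DtN form is what actually earns this scheme the label ``Galerkin''.
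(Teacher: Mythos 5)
Your proposal is correct and follows essentially the same route as the paper: both arguments rest on the slice-wise integration by parts in $x_2$ of the truncated ODE system \eqref{ODE}, cancellation of the inter-slice boundary terms via the enforced continuity of $\sum_m \varepsilon_{h,n-m}^{-1}\partial_{x_2}u_m^{h,M}$, Fourier orthogonality in $x_1$ to reassemble the volume terms of $B_{\varepsilon_h}(\cdot,\cdot)$, and identification of the surviving $\Gamma_{\pm H}$ terms with $T^{\pm}(u^{h,M})$. The only differences are cosmetic: you run the computation from the sesquilinear form down to the ODEs rather than building up from them, and your detour through the total field to recover the source $f(v_M)$ is unnecessary since the paper simply carries the coefficients $f_n$ through the slice-wise calculation.
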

\begin{proof}
Each Fourier coefficient function $u_{n}^{h,M}(x_{2})$ in \eqref{RCWA} solves   the second-order ODE
(see \cite{Hench})
\begin{equation}
\label{ODE}
\sum_{m=-M}^{M}\varepsilon_{h,n-m}^{-1}\frac{\partial^{2}u^{h,M}_{m}}{\partial x_{2}}=\alpha_{n}\sum_{m=-M}^{M}\varepsilon_{h,n-m}^{-1}\alpha_{m} u_{m}^{h,m}-\kappa^{2}u_{n}^{h,M}+f_{n}
\end{equation}
in each slice $S_{j}$ for $j \in [1,S]$. Here, $\varepsilon_{h,n-m}^{-1}$ is the the $(n,m)$th entry of the Toeplitz matrix formed from the Fourier coefficients of $1/\varepsilon_{h}$. Let $v_M \in V_{M}$, so $v_{M}=\sum_{m=-M}^{M}\xi_{m}(x_{2})\phi_{m}(x_{1})$, where $\xi_{m} \in H^{1}(-H,H)$ and $\phi_{m} \in E_{M}$. We multiply both sides of \eqref{ODE} by $\overline{\xi_n},$ and integrate both sides with respect to $x_{2}$ on $[h_{j-1},h_{j}]$. Integrating the left   side of \eqref{ODE} by parts in $x_{2}$, we have
\begin{align}
\nonumber
\int_{h_{j-1}}^{h_{j}}&\bigg(\sum_{m=-M}^{M}\varepsilon_{h,n-m}^{-1}\frac{\partial u^{h,M}_{m}}{\partial x_{2}}\bigg)
\frac{\partial \overline{\xi_{n}}}{\partial x_{2}} \ d x_{2}-\bigg(\sum_{n=-M}^{M} \varepsilon_{h,n-m}^{-1} \frac{\partial u_{m}^{h,M}}{\partial x_{2}}(h_{j}^{-}) \bigg)\overline{\xi_{n}}(h_{j}^{-})\\ 
\nonumber
&+\bigg(\sum_{n=-M}^{M} \varepsilon_{h,n-m}^{-1} \frac{\partial u_{m}^{h,M}}{\partial x_{2}}(h_{j-1}^{+}) \bigg)\overline{\xi_{n}}(h_{j-1}^{+})\\&=\int_{h_{j-1}}^{h_{j}}\bigg(\kappa^{2}u_{n}^{h,M}-\alpha_{n}\sum_{m=-M}^{M}\varepsilon_{h,n-m}^{-1}\alpha_{m}u_{m}^{h,M} -f_{n} \bigg)\overline{\xi_{n}}\ d x_{2}.
\label{eq1}
\end{align} 
We sum \eqref{eq1} over all $j \in [1,S]$. Since the continuity of $\sum_{m=-M}^{M}\varepsilon_{h,n-m}^{-1}\frac{\partial u_{n}^{h,M}}{\partial x_{2}}$ is enforced across the inter-slice boundaries, every boundary term in \eqref{eq1} cancels except the terms on $\Gamma_{H}$ and $\Gamma_{-H}$. We multiply both sides of the resulting equation by $\phi_{n}\overline{\phi_{n}}$, and integrate both sides with respect to $x_{1}$ in one period. The resulting equation is
\begin{align}
 \nonumber
\int_{\Omega}& \bigg( \sum_{m=-M}^{M} \varepsilon_{h,n-m}^{-1}\frac{\partial u_{m}^{h,M}}{\partial x_{2}} \bigg)\phi_{n} \frac{\partial \overline{\xi_{n}}}{\partial x_{2}} \overline{\phi_{n}}=-\int_{\Gamma_{H}}\bigg(\sum_{m=-M}^{M}\varepsilon_{h,n-m}^{-1} \frac{\partial u_{m}^{h,M}}{\partial x_{2}}\bigg)\phi_{n}\overline{\xi_{n}}\overline{\phi_{n}} \\
\nonumber
&+\int_{\Gamma_{-H}}\bigg(\sum_{m=-M}^{M}\varepsilon_{h,n-m}^{-1} \frac{\partial u_{m}^{h,M}}{\partial x_{2}}\bigg)\phi_{n}\overline{\xi_{n}}\overline{\phi_{n}} \\
\vspace{6pt}
&=\int_{\Omega}\bigg(\kappa^{2}u_{n}^{h,M}-\alpha_{n}\sum_{m=-M}^{M}\varepsilon_{h,n-m}^{-1} \alpha_{m} u_{m}^{h,M}-f_{n}  \bigg)\phi_{n} \overline{\xi_{n}}\overline{\phi_{n}}.
\label{eq2}
\end{align}
To complete the proof, we sum \eqref{eq2} over all $M \leq n \leq M$ and use that $\int_{0}^{L_{x}}\phi_{n}\overline{\phi_{m}}=0$ for all $n \neq m$ and $\frac{\partial}{\partial x_{1}}\phi_{n} \frac{\partial}{\partial x_{1}}\overline{\phi_{n}}=\alpha_{n}^{2}\phi_{n}\overline{\phi_{n}}$ to obtain 
\begin{align}
\nonumber
\int_{\Omega}&\bigg(\frac{1}{\varepsilon_{h}}\nabla u^{h,M} \cdot \nabla \overline{v_{M}}-\kappa^{2}u^{h,M} \overline{v_{M}} \bigg)-\int_{\Gamma_{H}}\frac{1}{\varepsilon_{+}}\frac{\partial u^{h,M}}{\partial x_{2}}\overline{v_{M}}+\int_{\Gamma_{-H}}\frac{1}{\varepsilon_{-}}\frac{\partial u^{h,M}}{\partial x_{2}}\overline{v_{M}}\\
&=-\int_{\Omega}f\overline{v_{M}}.
\end{align}
Finally, we recall that $\frac{1}{\varepsilon_{+}}\frac{\partial u^{h,M}}{\partial x_{2}}=T^{+}(u^{h,M})$ and $\frac{1}{\varepsilon_{-}}\frac{\partial u^{h,M}}{\partial x_{2}}=-T^{-}(u^{h,M})$.
\qed
\end{proof}

\section{Convergence of the RCWA  in $M$}
\label{modes}
To show convergence with increasing number $2M+1$ of retained Fourier modes, we first consider an associated adjoint problem. To this end, for $F \in L_{qp}^{2}(\Omega)$ we seek a $z_{F}^{h} \in H_{qp}^{1}(\Omega)$ such that 
\begin{equation}
\overline{B_{\varepsilon_{h}}(\xi,z_{F}^{h})}=-\int_{\Omega}F \overline{\xi}
\end{equation}
for all $\xi \in H_{qp}^{1}(\Omega)$. For $\varepsilon$ piecewise in  $C^{2}$, this problem has a unique solution in $H_{qp}^{1}(\Omega)$ because of Theorem \ref{duals}. The Galerkin orthogonality 
\begin{equation}
B_{\varepsilon_{h}}(u^{h}-u^{h,M},v_{M})=0
\end{equation}
for all $v_{M} \in V_{M}$ holds because $u^{h}$ solves problem \eqref{p2} and the RCWA solution
$u^{h,M}$ solves \eqref{p3}. Taking $\xi=u^{h}-u^{h,M}$, we have 
\begin{align}
\label{Nitsche}
\nonumber
\normL{u^{h}-u^{h,M}}{\Omega}&\leq \gamma \normH{u^{h}-u^{h,M}}{\Omega}\sup_{F \in L_{qp}^{2}(\Omega)}\bigg( \frac{1}{\normL{F}{\Omega}}\inf_{v_{M}\in V_{M}}\normH{z_{F}^{h}-v_{M}}{\Omega}\bigg) \\
\vspace{6pt}
&\leq C_{2}(\kappa,\varepsilon)\gamma \normH{u^{h}-u^{h,M}}{\Omega}M^{-s_{2}}.
\end{align}
This follows because $\normH{z_{F}^{h}-\mathcal{F}_{M}z_{F}^{h}}{\Omega} \leq M^{-s_{2}}\normblank{z_{F}^{h}}{H^{1+s_{2}}(\Omega)}$, and by Theorem \ref{aest}. 

\begin{theorem}
\label{Mc}
Suppose $\varepsilon$ is piecewise $C^{2}$ in $\mathbb{R}^{2}$, real with $\Re(\varepsilon)>0$ and satisfies the non-trapping condition of Theorem \ref{duals}. Let $u^{h} \in H_{qp}^{1}(\Omega)$ be the solution to problem \eqref{p2} with $f=\nabla \cdot \big[(\varepsilon_{+}^{-1}-\varepsilon_{h}^{-1})\nabla u^{\text{inc}} \big]$ on the right   hand side, and $u^{h,M}$ be the RCWA solution. Then there is a constant $C>0$ independent of $h$ and $M$ such that
\begin{equation}
\normblank{u^{h}-u^{h,M}}{s} \leq C M^{(s-2)s_{2}},
\end{equation}
where $s\in\{0,1\}$, $s_{2}\in(0,1/2)$ is related to the regularity of $u^{h}$ and $M$ is large enough.
\end{theorem}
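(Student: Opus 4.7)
The plan is to combine the Aubin--Nitsche bound already derived in \eqref{Nitsche} with a Schatz-type absorption argument to obtain the $H^1$ estimate; the $L^2$ estimate then falls out by a second application of \eqref{Nitsche}. Setting $e = u^h - u^{h,M}$, I would first recall the coercive shift $B^{+}_{\varepsilon_h}(w,v) = B_{\varepsilon_h}(w,v) + 2\kappa^2\int_\Omega w\bar v$ from the proof of Lemma \ref{dual}, which satisfies $\Re B^{+}_{\varepsilon_h}(w,w) \ge m\,\normHs{w}{\Omega}$ with $m = \min(\min_\Omega(1/\varepsilon_h), \kappa^2)$. The Galerkin orthogonality $B_{\varepsilon_h}(e, v_M) = 0$ for all $v_M \in V_M$ (Theorem \ref{Galerkin-theorem}) reduces $B_{\varepsilon_h}(e,e)$ to $B_{\varepsilon_h}(e, u^h - v_M)$ for any $v_M \in V_M$, and combining this with coercivity and continuity of $B_{\varepsilon_h}$ (continuity constant $\gamma$) gives
\begin{equation}
m\,\normHs{e}{\Omega} \;\le\; \gamma\,\normH{e}{\Omega}\,\normH{u^h - v_M}{\Omega} + 2\kappa^2\,\normLs{e}{\Omega} \qquad \forall\, v_M \in V_M.
\end{equation}

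Next, I would invoke the bound $\normL{e}{\Omega} \le C_2(\kappa,\varepsilon_h)\gamma M^{-s_2}\normH{e}{\Omega}$ already established in \eqref{Nitsche}. For $M$ sufficiently large, the $\normLs{e}{\Omega}$ term is dominated by a small fraction of $\normHs{e}{\Omega}$ and can be absorbed on the left; then Young's inequality yields $\normH{e}{\Omega} \le C\,\inf_{v_M \in V_M} \normH{u^h - v_M}{\Omega}$. Choosing $v_M = \mathcal{F}_M u^h$ and using the regularity $u^h \in H^{1+s_2}(\Omega)$, which holds because $\varepsilon_h$ is piecewise $C^2$ and inherits the generalised non-trapping condition (as verified at the start of the proof of Theorem \ref{slice}) so that Corollary \ref{ah} applies to $u^h$, the standard Fourier truncation estimate $\normH{u^h - \mathcal{F}_M u^h}{\Omega} \le C M^{-s_2}\,\normblank{u^h}{H^{1+s_2}(\Omega)}$ produces the $s = 1$ conclusion. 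Substituting this back into \eqref{Nitsche} then yields the $s = 0$ conclusion at rate $M^{-2s_2}$, which matches $M^{(s-2)s_2}$ in both cases.

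The main obstacle is the absorption step, which is the familiar Schatz dichotomy for Galerkin discretisation of an indefinite (Helmholtz-type) problem: it succeeds only once $C_2(\kappa,\varepsilon_h)\gamma M^{-s_2}$ is small, and the crucial subtlety here is that this threshold, and the resulting constant $C$, must be independent of the slice parameter $h$ so that the convergence estimate does not deteriorate as $h \to 0$. That uniformity is inherited from Remark \ref{remark}, which controls $C_1(\kappa, \varepsilon_h)$, and hence $C_2(\kappa,\varepsilon_h)$, by quantities depending only on $\kappa$ and $\varepsilon$. A secondary, purely bookkeeping point is that the adjoint solution $z^h_F$ entering \eqref{Nitsche} has the same $H^{1+s_2}$ regularity as the primal problem; this is immediate from Theorem \ref{aest} since the adjoint sesquilinear form differs from $B_{\varepsilon_h}$ only by conjugation of the Dirichlet-to-Neumann operators, which have identical mapping properties.
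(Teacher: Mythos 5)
Your proposal is correct and follows essentially the same route as the paper: Galerkin orthogonality, a G{\aa}rding-type coercivity bound (your shifted form $B^{+}_{\varepsilon_h}$ is equivalent to the paper's explicitly stated G{\aa}rding inequality), the Aubin--Nitsche duality bound \eqref{Nitsche}, a Schatz absorption argument valid once $M$ exceeds an $h$-independent threshold, and the Fourier truncation estimate applied to $\mathcal{F}_M u^h$. The only differences are bookkeeping (you absorb the $L^2$ term quadratically rather than linearly), and your explicit remarks on the $h$-uniformity of $C_2(\kappa,\varepsilon_h)$ and on the adjoint regularity make precise points the paper leaves implicit.
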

\begin{proof}
Using Galerkin orthogonality again, it follows that
\begin{equation}
B_{\varepsilon_{h}}(u^{h}-u^{h,M},u^{h}-u^{h,M})=B_{\varepsilon_{h}}(u^{h}-u^{h,M},u^{h}-\mathcal{F}_{M}u^{h}).
\end{equation}
We also have that the sesquilinear form $B_{\varepsilon_{h}}(\cdot,\cdot)$ satisfies the G{\"a}rding inequality 
\begin{equation}
\big| B_{\varepsilon_{h}}(w,w)\big| \geq c\normHs{w}{\Omega}-(\kappa^{2}+1)\normLs{w}{\Omega}
\end{equation}
for all $w \in H_{qp}^{1}(\Omega)$, where $c=\min_{\Omega} \big(\frac{\Re (\varepsilon)}{|\varepsilon|^{2}},1 \big)$.
\end{proof}
We use an argument of Schatz (see \cite{Schatz}) and standard properties of Fourier series to obtain
\begin{equation}
\label{Schatz}
\normH{u^{h}-u^{h,M}}{\Omega} \leq c^{-1}\bigg( \gamma M^{-s_{2}}+(\kappa^{2}+1)\normL{u^{h}-u^{h,M}}{\Omega} \bigg).
\end{equation}
Now by taking $M \geq \big(2(\kappa^{2}+1)\gamma C_{2}(\kappa,\varepsilon)c^{-1}\big)^{1/s_{2}}$ in \eqref{Nitsche}, and combining the result with \eqref{Schatz}, we have that
\begin{equation}
\normH{u^{h}-u^{h,M}}{\Omega}\leq 2c^{-1}\gamma M^{-s_{2}}.
\end{equation}
This completes the proof. \qed
\section{Convergence of the RCWA method in dissipative media}
\label{absorbing}
So far we have only discussed the case where $\varepsilon>0$ is real in $\Omega$. This case is the more mathematically interesting one, because we had to use a Rellich identity along with density arguments
 to demonstrate convergence of the RCWA.  
 
 Now we consider Case~II of Section~\ref{formulation}:   $\varepsilon$ is complex in $\Omega$
 with $\Im(\varepsilon)>c_{1}>0$ and $\Re(\varepsilon)>c_{2}>0$. Then, a part of electromagnetic energy incident on the domain during a finite interval of time is absorbed inside the domain. This case is of   interest for optical modeling of  solar cells \cite{Ahmad-CIGS,Ahmad-AlGaAs,Tom,CivilettiMonk} and absorbing gratings \cite{Kahmann,Fally,Brown}.

 Again, we let $\varepsilon$ be piecewise  $C^{2}$ in $\mathbb{R}^{2}$. The case of $\varepsilon$ complex is easier than the purely real case because $\Im(\varepsilon)>c_{1}>0$ and $\Re(\varepsilon)>c_{2}>0$ ensures that the sesquilinear form  $B_{\varepsilon}(\cdot,\cdot)$ defined in \eqref{p1} is coercive. Therefore, we can use Strang lemmas \cite{Previato} to prove convergence. To show that $B_{\varepsilon}(\cdot,\cdot)$ is coercive in this case, we first prove a lemma.
\begin{lemma}
\label{bound}
Let $w \in H_{qp}^{1}(\Omega)$. Then there is a constant $C>0$ such that
\begin{equation}
\normLs{w}{\Omega}\leq C \bigg(|w|^{2}_{H^{1}(\Omega)}+\Im \int_{\Gamma_{H}}\overline{w} T^{+}(w) \bigg)
\end{equation}
where $\vert\cdot\vert_{H^1(\Omega)}$ is the $H^1$ seminorm.
\end{lemma}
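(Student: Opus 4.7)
The plan is to control the two pieces of $\|w\|_{L^2(\Omega)}^2$ separately: a volume contribution controlled by the $H^1$ seminorm, and a boundary contribution at $\Gamma_H$ which will be split into propagating and evanescent Fourier modes. I would start by invoking a Poincaré-type lifting from $\Gamma_H$ into $\Omega$ in the style of Lemma~4.3 of Lechleiter--Ritterbusch cited earlier, yielding an inequality of the form
\begin{equation*}
\|w\|_{L^2(\Omega)}^2 \leq 4H \int_{\Gamma_H} |w|^2 + 4H^2 \left\|\frac{\partial w}{\partial x_2}\right\|_{L^2(\Omega)}^2.
\end{equation*}
The second term is manifestly bounded by $|w|_{H^1(\Omega)}^2$, so the whole game reduces to estimating $\int_{\Gamma_H}|w|^2$.

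Next, I would expand $w$ in its quasi-periodic Fourier series $w(x_1,x_2)=\sum_n w_n(x_2)\exp(i\alpha_n x_1)$, so that by Parseval $\int_{\Gamma_H}|w|^2 = L_x \sum_n |w_n(H)|^2$, and split the sum according to propagating modes $\alpha_n^2<\kappa^2\varepsilon_+$ versus evanescent modes $\alpha_n^2>\kappa^2\varepsilon_+$. For the propagating modes there are only finitely many, and by the assumed absence of Rayleigh--Wood anomalies each $\sqrt{\kappa^2\varepsilon_+-\alpha_n^2}$ is bounded below by some positive constant $c_0$; the identity \eqref{Dtn} for $\Im\int_{\Gamma_H}\overline{w}T^+(w)$ then gives
\begin{equation*}
L_x \sum_{\alpha_n^2<\kappa^2\varepsilon_+} |w_n(H)|^2 \leq \frac{\varepsilon_+}{c_0}\, \Im \int_{\Gamma_H}\overline{w}T^+(w).
\end{equation*}

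The heart of the argument is the evanescent-mode estimate, which must be controlled purely by $|w|_{H^1(\Omega)}^2$. For each mode I would use the one-dimensional identity obtained from $\partial_s[(s+H)|w_n|^2]$, namely
\begin{equation*}
2H|w_n(H)|^2 = \int_{-H}^{H}|w_n|^2\,ds + 2\int_{-H}^{H}(s+H)\,\Re(\overline{w_n}\partial_s w_n)\,ds,
\end{equation*}
and then Young's inequality with a free parameter $\gamma>0$ to turn this into $|w_n(H)|^2 \leq C_1\int|w_n|^2 + C_2\int|\partial_s w_n|^2$. For the evanescent modes, $\alpha_n^2 \geq \kappa^2\varepsilon_+$, so $\int|w_n|^2 \leq (\kappa^2\varepsilon_+)^{-1}\int \alpha_n^2|w_n|^2$; summing and recognizing that $L_x\sum_n\int(|\partial_s w_n|^2+\alpha_n^2|w_n|^2) = |w|_{H^1(\Omega)}^2$ gives the desired bound $L_x\sum_{\alpha_n^2>\kappa^2\varepsilon_+}|w_n(H)|^2 \leq C' |w|_{H^1(\Omega)}^2$.

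The main subtlety, and the only place one might stumble, is this treatment of the evanescent-mode sum: a naive trace inequality would control $\int_{\Gamma_H}|w|^2$ only by the full $\|w\|_{H^1(\Omega)}$, which would be circular. The trick is precisely that evanescent modes have $|\alpha_n|$ bounded below, so the $L^2$ part of the 1D inequality can be absorbed into the seminorm via the $\alpha_n^2|w_n|^2$ piece. Combining the propagating and evanescent bounds on $\int_{\Gamma_H}|w|^2$ with the initial lifting inequality and collecting constants into a single $C$ depending on $H$, $L_x$, $\kappa$, $\varepsilon_+$ and $c_0$ finishes the proof.
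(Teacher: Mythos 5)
Your proof is correct, but it takes a genuinely different route from the paper's. The paper proves this lemma by contradiction and compactness: assuming a sequence $\{w_n\}$ violating the bound, it extracts a subsequence converging weakly in $H^1$ and strongly in $L^2$ to a limit $q$ with vanishing gradient, then shows the constant $q$ must be zero --- via quasi-periodicity when $\theta\neq 0$, and via the vanishing of the zeroth (propagating) boundary Fourier coefficient together with the trace theorem when $\theta=0$ --- contradicting $\normL{q}{\Omega}=1$. Your argument is instead direct and constructive: the Lechleiter--Ritterbusch lifting reduces everything to $\int_{\Gamma_H}|w|^2$, the finitely many propagating modes are absorbed into $\Im\int_{\Gamma_H}\overline{w}T^{+}(w)$ using the lower bound $c_0$ on $\sqrt{\kappa^2\varepsilon_+-\alpha_n^2}$ (here the exclusion of Rayleigh--Wood anomalies is essential, exactly as you note, since a mode with $\beta_n^+=0$ would be controlled by neither term), and the evanescent modes are handled by the one-dimensional trace identity plus the observation that $\alpha_n^2\geq\kappa^2\varepsilon_+$ lets the $L^2$ piece be absorbed into the $\alpha_n^2|w_n|^2$ part of the seminorm --- which is indeed the step that rescues the argument from circularity. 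What your approach buys is an explicit constant $C=C(H,L_x,\kappa,\varepsilon_+,c_0)$ and no case split on $\theta$, which is in the spirit of the explicit continuity constants tracked elsewhere in the paper; the trade-off is that your constant degenerates as $c_0\to 0$ (approach to an anomaly), whereas the compactness argument yields a finite but non-explicit constant. One cosmetic point: the squared norm on the $\partial w/\partial x_2$ term in the lifting inequality is the correct form (the paper's statement of that inequality in Lemma~\ref{lemma1} omits the square, apparently a typo), so your version is the one to use.
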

\begin{proof}
By contradiction, suppose there is a sequence $\{w_{n} \}_{n=1}^{\infty}$ in $H_{qp}^{1}(\Omega)$ such that
\begin{enumerate}
\item
$\normL{w_{n}}{\Omega}=1,$ and
\item
$\normL{w_{n}}{\Omega}> n \bigg(|w_{n}|^{2}_{H^{1}(\Omega)}+\Im \int_{\Gamma_{H}}\overline{w_{n}} T^{+}(w) \bigg)$.
\end{enumerate}
As the imaginary part of the Dirichlet-to-Neumann boundary integral is nonnegative 
according to \eqref{Dtn}$_2$, we see by using the two aforementioned properties of the sequence $\{w_{n} \}_{n=1}^{\infty}$ that
\begin{equation}
\frac{1}{n}>|w_{n}|_{H^{1}(\Omega)}^{2}\quad \forall\,n \geq 1.
\end{equation}
Then the sequence is bounded in the $H^{1}(\Omega)$ norm. Furthermore, there is a subsequence $\{w_{n(j)} \}_{n=1}^{\infty}$ that converges weakly in $H^{1}$ to some $q \in H^{1}(\Omega)$ but strongly to $q$ in $L^{2}(\Omega)$. It follows that
\begin{equation}
\normLs{w_{n(j)}}{\Omega}+\normLs{\nabla w_{n(j)}}{\Omega}\to \normLs{q}{\Omega},
\end{equation}
so then $\normL{q}{\Omega}=1$. Since strong convergence implies weak convergence (in particular in $L^{2}$), it follows that $(\nabla w_{n(j)},\nabla p)_{L^{2}(\Omega)}\to (\nabla q, \nabla p)_{L^{2}(\Omega)}$ for all $p \in H^{1}(\Omega)$.  But $\lim_{j\to \infty} \nabla w_{n(j)}=0$, and so $(\nabla q,\nabla p)_{L^{2}(\Omega)}=0$ for all $p \in H^{1}(\Omega)$, and $\normL{\nabla q}{\Omega}=0.$  Thus, $q$ is constant in $\Omega$ and also $q \in H_{qp}^{1}(\Omega)$ since it is a closed subspace of $H^{1}(\Omega)$. 
Then $q$ is a quasi-periodic constant function and $q=0$, if the incidence angle $\theta \neq 0$. Since this is a contradiction, we assume that $\theta=0$. 
 Then the only non-zero Fourier coefficient for $q$ is the zeroth one. 
 
 By construction,
\begin{equation}
\frac{1}{n} > \Im \int_{\Gamma_{H}}\varepsilon_{+}\overline{w_{n(j)}}T^{+}(w_{n(j)})=\sum_{\alpha_{k}^{2}<\kappa^{2}\varepsilon_{+}}\sqrt{\kappa^{2}\varepsilon_{+}-\alpha_{k}^{2}}\big|w_{n(j)}^{(k)}(H) \big|^{2}
\end{equation}
and then it follows that $\lim_{j \to \infty}w_{n(j)}^{(k)}(H) =0$ for all $k$ such that $\alpha_{k}^{2}<\kappa^{2}\varepsilon_{+}$. In particular, $\alpha_{0}^{2}=0<\kappa^{2}\varepsilon_{+}$. By the trace theorem \cite{BrennerScott}, we know that there is a constant $c>0$ such that
\begin{equation}
\normLs{w_{n(j)}-q}{\Gamma_{H}} \leq c^{2} \big( \normLs{w_{n(j)}-q}{\Omega}+|w_{n(j)}|_{H^{1}(\Omega)}^{2} \big).
\end{equation}
It follows from Bessel's inequality that
\begin{equation}
\sum_{k \in \mathbb{Z}}\big|w_{n(j)}^{(k)}(H)-q_{k}(H) \big|^{2} \leq \normLs{w_{n(j)}-q}{\Gamma_{H}}\overset{j \to \infty}{\to} 0,
\end{equation}
but then $q=0$ on $\Gamma_{H}$, since the coefficient $q_{0}(H)=0$. Thus $q=0$ in $\Omega$, contradicting that $\normL{q}{\Omega}=1$.
\qed
\end{proof}

Now we can prove the coercivity of $B_{\varepsilon}(\cdot,\cdot)$.
\begin{corollary}
If $\Im(\varepsilon)>c_{1}>0$ and $\Re(\varepsilon)>c_{2}>0$ for some positive constants $c_{1}$ and $c_{2}$ in $\Omega$, then the sesquilinear form $B_{\varepsilon}(\cdot,\cdot)$ is coercive in $H^{1}(\Omega)$.
\end{corollary}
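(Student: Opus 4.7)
The plan is to obtain coercivity by working with the imaginary part of $B_{\varepsilon}(w,w)$ rather than the real part, and then invoking Lemma \ref{bound} to upgrade control of the $H^1$ seminorm plus the imaginary Dirichlet-to-Neumann term into full $H^1(\Omega)$ control.

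First, I would set $v=w$ in the definition \eqref{vari} and compute
\[
\Im B_{\varepsilon}(w,w)
= \int_{\Omega} \Im\!\left(\tfrac{1}{\varepsilon}\right)|\nabla w|^{2}
- \Im\!\int_{\Gamma_{H}}\overline{w}\,T^{+}(w) - \Im\!\int_{\Gamma_{-H}}\overline{w}\,T^{-}(w),
\]
since the $-\kappa^{2}\|w\|_{L^{2}}^{2}$ contribution is real. Using $\Im(1/\varepsilon) = -\Im(\varepsilon)/|\varepsilon|^{2}$ and the hypothesis $\Im(\varepsilon) > c_{1} > 0$, together with the sign of the imaginary parts of the Dirichlet-to-Neumann integrals from \eqref{Dtn}, I would conclude
\[
|\Im B_{\varepsilon}(w,w)|
\;\geq\; \int_{\Omega}\frac{\Im(\varepsilon)}{|\varepsilon|^{2}}|\nabla w|^{2}
\;+\;\Im\!\int_{\Gamma_{H}}\overline{w}\,T^{+}(w)
\;\geq\; c_{3}\,|w|_{H^{1}(\Omega)}^{2}\;+\;\Im\!\int_{\Gamma_{H}}\overline{w}\,T^{+}(w),
\]
where $c_{3}$ depends on $c_{1}$ and on $\|\varepsilon\|_{L^{\infty}(\Omega)}$, and where I have dropped the nonnegative contribution from $\Gamma_{-H}$.

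Next, I would apply Lemma \ref{bound} to the quantity on the right: it gives a constant $C>0$ with
\[
\|w\|_{L^{2}(\Omega)}^{2}
\;\leq\; C\left(|w|_{H^{1}(\Omega)}^{2}+\Im\!\int_{\Gamma_{H}}\overline{w}\,T^{+}(w)\right).
\]
Splitting the lower bound on $|\Im B_{\varepsilon}(w,w)|$ in half, the first half controls $|w|_{H^{1}}^{2}$ directly, and the second half, after applying Lemma \ref{bound}, controls $\|w\|_{L^{2}}^{2}$. Combining, there is $\alpha>0$ with
\[
|B_{\varepsilon}(w,w)|\;\geq\;|\Im B_{\varepsilon}(w,w)|\;\geq\;\alpha\,\|w\|_{H^{1}(\Omega)}^{2}\qquad \forall\,w\in H^{1}_{qp}(\Omega),
\]
which is the claimed coercivity.

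The only subtle point, and the one I would be most careful about, is making sure that the constant $c_{3}$ is genuinely strictly positive uniformly in $w$; this uses both $\Im(\varepsilon)\geq c_{1}>0$ in $\Omega$ and the fact that $\varepsilon\in L^{\infty}$ so $|\varepsilon|^{2}$ is bounded above. The hypothesis $\Re(\varepsilon)>c_{2}>0$ is not actually needed to run this argument in the imaginary-part direction, but it is convenient because it lets one also check, via $\Re B_{\varepsilon}$, a matching upper bound on $|B_{\varepsilon}(w,v)|$ so that the coercive sesquilinear form satisfies the full hypotheses of Lax--Milgram.
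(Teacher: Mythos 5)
Your proof is correct, but it takes a genuinely different (and shorter) route than the paper. The paper derives from the imaginary part only the $L^2$ bound $|\Im B_{\varepsilon}(w,w)|\geq \lambda(\varepsilon)\normLs{w}{\Omega}$ via Lemma~\ref{bound}, discarding the gradient information at that stage; it then proves a G{\aa}rding-type lower bound for $|\Re B_{\varepsilon}(w,w)|$ (this is where $\Re(\varepsilon)>c_2>0$ enters) and recombines the two through $|B_{\varepsilon}(w,w)|^2=|\Im B_{\varepsilon}(w,w)|^2+|\Re B_{\varepsilon}(w,w)|^2$ with a two-case analysis, arriving at the explicit constant in \eqref{unib}. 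Your observation is that the imaginary part already controls the full $H^1$ seminorm, since $-\Im(1/\varepsilon)=\Im(\varepsilon)/|\varepsilon|^2\geq c_1/\normLinf{\varepsilon}{\Omega}^2>0$ and the boundary terms in \eqref{Dtn} have the right sign; splitting that lower bound and feeding half of it into Lemma~\ref{bound} then yields coercivity from $|\Im B_{\varepsilon}(w,w)|$ alone, with no case analysis and, as you note, without using $\Re(\varepsilon)>c_2$ at all. Both arguments are sound and both produce constants that are only semi-explicit (each inherits the compactness constant $C$ of Lemma~\ref{bound}), and both constants improve monotonically when $\varepsilon$ is replaced by $\varepsilon_h$, so either would support the $h$-uniform ellipticity needed in Theorems~\ref{hconv1} and~\ref{c2}. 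What the paper's longer route buys is the particular form of the constant $\sigma(\varepsilon)\big/\sqrt{1+2(\gamma/\lambda(\varepsilon))^2}$, which it reuses verbatim in the subsequent \textit{a-priori} estimate and in defining $C_3(\kappa,\varepsilon)$; if you adopted your version you would simply carry your constant $\alpha$ through those statements instead. One small point of care: your final aside about needing $\Re(\varepsilon)>c_2$ for the continuity (upper bound) of $B_{\varepsilon}$ is not quite right either, since boundedness of $1/\varepsilon$ already follows from $|\varepsilon|\geq\Im(\varepsilon)>c_1$ — but this does not affect the proof of the stated coercivity.
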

\begin{proof}
It suffices to show that $\Im B_{\varepsilon}(\cdot,\cdot)$ is coercive in $L^{2}(\Omega)$ and $\Re  B_{\varepsilon}(\cdot,\cdot)$ is coercive in $H^{1}(\Omega)$ in the G{\"a}rding sense. Recalling that $\Im(1/\varepsilon)=-\Im(\varepsilon)/|\varepsilon|^{2}$ and the signs of the imaginary parts of the Dirichlet-to-Neumann boundary integral are known per \eqref{Dtn}, we have
\begin{align}
\nonumber
\big|\Im B_{\varepsilon}(w,w)\big|&\geq \min_{\Omega}\bigg(\frac{\Im (\varepsilon)}{|\varepsilon|^{2}},1 \bigg)\bigg(|w|_{H^{1}(\Omega)}^{2} +\Im \int_{\Gamma_{H}}\overline{w} T^{+}(w) \bigg)\\
& \geq C\min_{\Omega}\bigg(\frac{\Im (\varepsilon)}{|\varepsilon|^{2}},1 \bigg) \normLs{w}{\Omega}, 
\label{imag}
\end{align}
for all $w \in H_{qp}^{1}(\Omega)$ per Lemma \ref{bound}. To see that $|\Re B_{\varepsilon}(\cdot,\cdot)|$ is coercive in $H^{1}(\Omega)$ in the G{\"a}rding sense, we recall that $\Re(1/\varepsilon)=\Re(\epsilon)/|\varepsilon|^{2}$, and
\begin{align}
\big| \Re B_{\varepsilon}(w,w)\big| \geq \min_{\Omega} \bigg(\frac{\Re (\varepsilon)}{|\varepsilon|^{2}},1 \bigg)\normHs{w}{\Omega}-(\kappa^{2}+1)\normLs{w}{\Omega}
\end{align}
for all $w \in H_{qp}^{1}(\Omega)$. To show this implies coercivity, there are two cases to consider. We let $\lambda(\varepsilon)=C\min_{\Omega}\bigg(\frac{\Im (\varepsilon)}{|\varepsilon|^{2}},1 \bigg),$ $\sigma(\varepsilon)=\min_{\Omega} \bigg(\frac{\Re (\varepsilon)}{|\varepsilon|^{2}},1 \bigg)$ and $\gamma=\kappa^{2}+1$. If $\sigma \normHs{w}{\Omega}-\gamma \normLs{w}{\Omega}\leq 0$, then 
 \begin{align}
 \nonumber
 \lambda \sigma \normHs{w}{\Omega}&\leq \gamma \lambda \normLs{w}{\Omega}\\ &\leq \gamma \big| \Im B_{\varepsilon}(w,w) \big|,
 \end{align}
  follows from \eqref{imag}. If $\sigma \normHs{w}{\Omega}-\gamma \normLs{w}{\Omega}>0$, we let $c=\lambda^{2}/(2\gamma^{2})$ so that $\lambda^{2}-c\gamma^{2}>0$ and set $\tilde{c}=2(c+1)$. By defining $a=\tilde{c}\gamma^{2}\normL{w}{\Omega}^{4}$ and $b=4(\tilde{c})^{-1}\sigma^{2}\normH{w}{\Omega}^{4}$, it follows from the inequality of arithmetic and geometric means that
 \begin{align}
 \nonumber
 \big|B_{\varepsilon}(w,w) \big|^{2}&=\big|\Im B_{\varepsilon}(w,w) \big|^{2}+\big|\Re B_{\varepsilon}(w,w) \big|^{2}\\
 \nonumber
 &\geq \lambda^{2}\normL{w}{\Omega}^{4}+\sigma^{2}\normH{w}{\Omega}^{4}+\gamma^{2}\normL{w}{\Omega}^{4}-2\sigma \gamma \normH{w}{\Omega}^{2}\normL{w}{\Omega}^{2}\\
 \nonumber
 &\geq \big[\lambda^{2}+\gamma^{2}(1-\tilde{c}/2) \big]\normL{w}{\Omega}^{4}+\sigma^{2}\big[1-2(\tilde{c})^{-1} \big]\normH{w}{\Omega}^{4}.\\
 \nonumber
 &=\frac{\lambda^{2}}{2}\normL{w}{\Omega}^{4}+\sigma^{2}\big(1-\frac{1}{c+1}\big)\normH{w}{\Omega}^{4}\\
 &\geq \frac{\sigma^{2} \lambda^{2}}{\lambda^{2}+2\gamma^{2}} \normH{w}{\Omega}^{4}.
 \end{align}
It follows in either case that
 \begin{equation}
 \label{unib}
 \big|B_{\varepsilon}(w,w) | \geq \frac{\sigma(\varepsilon)}{\sqrt{1+2(\frac{\gamma}{\lambda(\varepsilon)})^{2}}} \normH{w}{\Omega}^{2}
 \end{equation}
 for all $w \in H_{qp}^{1}(\Omega)$. 
\qed
\end{proof} 

Existence and uniqueness of the variational problem \eqref{p1} now follow by virtue of the Lax--Milgram Lemma \cite{evans}. Problem \eqref{p2} has the same characteristics but $\varepsilon_{h}$ is constructed by sampling the true relative permittivity at the inter-slice boundaries. Since $\Im(\varepsilon)>c_{1}>0$ and $\Re(\varepsilon)>c_{2}>0$, it follows that $\Im (\varepsilon_{h})>c_{1}>0$ and $\Re (\varepsilon_{h})>c_{2}>0$ for all $h>0$, and we have existence and uniqueness for the $\varepsilon_{h}$ problem as well. To obtain an \textit{a-priori} estimate where the continuity constant is explicit, we note that
 \begin{equation}
\label{Lax2}
\normH{u^{h}}{\Omega}\leq \frac{\sqrt{1+2(\frac{\gamma}{\lambda(\varepsilon)})^{2}}}{\sigma(\varepsilon)} \normblank{f}{\big(H^{1}_{qp}(\Omega)\big)^{'}}.
\end{equation}
follows from the Lax--Milgram Lemma, and the fact that $\lambda(\varepsilon_{h})^{-1} \leq \lambda(\varepsilon)^{-1} $ and $\sigma(\varepsilon_{h})^{-1} \leq \sigma(\varepsilon)^{-1} $ for all $h>0$.

The discussion above leads us to the following theorem.
 \begin{theorem}
 \label{hconv1}
 Suppose that $\varepsilon$ is piecewise $C^{2}$ in $\mathbb{R}^{2}$,
 $\Im (\varepsilon)>c_{1}>0$, $\Re (\varepsilon)>c_{2}>0$ in $\Omega$,  and the interfaces are graphs of piecewise $C^{2}$ functions.
   Then the two variational problems \eqref{p1} and \eqref{p2} have unique solutions $u$ and $u^{h} \in H_{qp}^{1}(\Omega)$, respectively. Furthermore there is a constant $C>0$ independent of $h>0$ such that
 \begin{equation}
 \normH{u-u^{h}}{\Omega}\leq C h^{s_{1}/2},
 \end{equation}
 where $s_{1}\in(0,1/2)$ is related to the regularity of $u$, i.e., $u \in H^{1+s_{1}}(\Omega)$.
 \end{theorem}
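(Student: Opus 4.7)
The plan is to reduce the proof to a perturbation estimate in the style of Theorem~\ref{slice}, but with the coercivity machinery already established in this section replacing the Rellich/non-trapping arguments used in Case~I. All the structural ingredients are now available.

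First I would dispose of existence and uniqueness. The Corollary proved just above gives a lower bound $|B_\varepsilon(w,w)| \geq C_\varepsilon \normHs{w}{\Omega}$ on $H^1_{qp}(\Omega)$, and since $\Im(\varepsilon_h) \geq c_1 > 0$ and $\Re(\varepsilon_h) \geq c_2 > 0$ hold for every $h$ (the stairstep approximation inherits the pointwise bounds of $\varepsilon$), the same corollary applies with $\varepsilon$ replaced by $\varepsilon_h$. Hence both sesquilinear forms are coercive on $H^1_{qp}(\Omega)$, and the Lax--Milgram Lemma yields unique solutions $u,\,u^h \in H^1_{qp}(\Omega)$ with the $h$-independent a-priori bound \eqref{Lax2}.

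Next I would derive the perturbation identity. Writing $u^t = u + u^{\text{inc}}$ and $u^{h,t} = u^h + u^{\text{inc}}$ and subtracting the two weak formulations gives, exactly as in \eqref{AL-eq80},
\begin{equation}
B_{\varepsilon_h}(u^t - u^{h,t},v) = \int_\Omega \left( \frac{1}{\varepsilon_h} - \frac{1}{\varepsilon} \right) \nabla u^t \cdot \nabla \overline{v}
\end{equation}
for all $v \in H^1_{qp}(\Omega)$. The right-hand side defines an element of $(H^1_{qp}(\Omega))'$, so applying the coercivity bound \eqref{Lax2} to $u^t - u^{h,t}$ (with constant controlled by $\sigma(\varepsilon)$ and $\lambda(\varepsilon)$, not by $\sigma(\varepsilon_h)$ or $\lambda(\varepsilon_h)$, thanks to the monotonicity remark preceding the theorem) reduces the question to estimating the dual norm of the right-hand side. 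By H\"older's inequality with conjugate exponents $2p = 2/s_1$ and $2q = 2/(1-s_1)$, together with the uniform bound $\|(\varepsilon_h \varepsilon)^{-1}\|_{L^\infty(\Omega)} \leq 1/(\min(c_2, |\varepsilon|)^2)$, I obtain the chain
\begin{equation}
\normH{u^t - u^{h,t}}{\Omega} \;\leq\; C\, \normLp{\varepsilon - \varepsilon_h}{\Omega}{2/s_1}\, \normLp{\nabla u^t}{\Omega}{2/(1-s_1)},
\end{equation}
in direct analogy with the chain \eqref{hconv} in the proof of Theorem~\ref{slice}. Sobolev embedding then controls the last factor by $\normHonesone{u^t}{\Omega}$ once a regularity index $s_1 \in (0, 1/2)$ is secured, and Lemma~6 of Ref.~\cite{CivilettiMonk} controls the middle factor by $c\, h^{s_1/2}$. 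Since $u^t - u^{h,t} = u - u^h$, the stated estimate follows.

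The one step that requires explicit justification is the regularity $u \in H^{1+s_1}(\Omega)$, which in Case~I came from Corollary~\ref{ah} via Theorem~\ref{aest}. I expect this to be the main (mild) obstacle, but it should not cause real difficulty: the proof of Theorem~\ref{aest} only used the a-priori bound from Theorem~\ref{duals} together with the elliptic regularity result of Ref.~\cite{Bernardi}/Ref.~\cite{Bonito} for piecewise $C^2$ coefficients with $\varepsilon^{-1} \in L^\infty$ and $\nabla \varepsilon^{-1}$ piecewise bounded. Both hypotheses hold in Case~II (since $|\varepsilon| \geq c_2 > 0$ and $\varepsilon$ is piecewise $C^2$), and in place of Theorem~\ref{duals} one simply invokes the Lax--Milgram bound \eqref{Lax2}. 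Thus the cut-off argument of Theorem~\ref{aest} and Corollary~\ref{ah} transfers verbatim, yielding $u \in H^{1+s_1}(\Omega)$ for some $s_1 \in (0, 1/2)$ with an $h$-independent bound on $\normHonesone{u^t}{\Omega}$. Combining everything gives the claimed convergence rate $h^{s_1/2}$.
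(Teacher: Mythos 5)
Your proposal is correct and follows essentially the same route as the paper: the paper invokes Strang's first lemma for the uniformly elliptic family $\big(B_{\varepsilon_h}\big)_{h>0}$ with the choice $w=u^t$, which is exactly your error equation $B_{\varepsilon_h}(u^t-u^{h,t},v)=\int_\Omega(\varepsilon_h^{-1}-\varepsilon^{-1})\nabla u^t\cdot\nabla\overline{v}$ combined with the coercivity bound \eqref{unib}, and it then estimates the consistency term by the same H\"older/Sobolev/$\normLp{\varepsilon-\varepsilon_h}{\Omega}{2/s_1}$ chain you describe. Your explicit remark that the regularity $u^t\in H^{1+s_1}(\Omega)$ transfers to Case~II by replacing Theorem~\ref{duals} with the Lax--Milgram bound \eqref{Lax2} is a point the paper leaves implicit, but it matches what the paper does in Theorem~\ref{c2}.
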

 \begin{proof}
 By virtue of \eqref{unib}, the family of sesquilinear forms $\big(B_{\varepsilon_{h}} (\cdot,\cdot)\big)_{h>0}$ is uniformly $H_{qp}^{1}(\Omega)$-elliptic. According to Strang's first lemma \cite{BrennerScott}, there is a constant $c>0$  independent of $h>0$ such that
 \begin{align}
 \nonumber
 \normH{u^{t}-u^{h,t}}{\Omega}&\leq c \inf_{w \in H_{qp}^{1}(\Omega)}\bigg(\normH{u^{t}-w}{\Omega} +\sup_{v \in H_{qp}^{1}(\Omega)} \frac{\big|B_{\varepsilon}(w,v)-B_{\varepsilon_{h}}(w,v) \big|}{\normH{v}{\Omega}}\bigg)\\
 &\leq c \bigg(\sup_{v \in H_{qp}^{1}(\Omega)} \frac{\big|B_{\varepsilon}(u^{t},v)-B_{\varepsilon_{h}}(u^{t},v) \big|}{\normH{v}{\Omega}}\bigg),
 \end{align} 
 where we put $u^{t}=w$. We then bound the consistency error
 \begin{equation}
 \big|B_{\varepsilon}(u^{t},v)-B_{\varepsilon_{h}}(u^{t},v) \big| \leq \bigg(\frac{1}{\min_{\Omega}|\varepsilon|} \bigg)^{2}\normLp{\varepsilon-\varepsilon_{h}}{\Omega}{2p}\normLp{\nabla u^{t}}{\Omega}{2q}\normH{v}{\Omega},
 \end{equation}
 where $(1/q) + (1/p)=1$. The proof follows just like for Theorem \ref{slice}.
 \qed
 \end{proof}
 
 \begin{theorem}
 \label{c2}
Suppose that  $\varepsilon$ is piecewise $C^{2}$ in $\mathbb{R}^{2}$,
 $\Im (\varepsilon)>c_{1}>0$, and $\Re (\varepsilon)>c_{2}>0$ in $\Omega$. 
 Let $u^{h} \in H_{qp}^{1}$ be the solution to problem \eqref{p2} and $u^{h,M}$ be the RCWA solution. Then there exists a constant $C>0$ independent of $h$ and $M$ such that
\begin{equation}
\normblank{u^{h}-u^{h,M}}{s}\leq C M^{(s-2)s_{2}},
\end{equation} 
where $s\in\left\{0,1\right\}$ and $s_2\in(0,1/2)$ 
is chosen so that $u^{h} \in H^{1+s_{2}}(\Omega)$.
 \end{theorem}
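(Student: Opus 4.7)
The plan is to exploit the fact that in the dissipative case the sesquilinear forms $B_{\varepsilon_h}(\cdot,\cdot)$ are \emph{uniformly} $H^1_{qp}(\Omega)$-elliptic by \eqref{unib} (with ellipticity constant bounded below by $\sigma(\varepsilon)/\sqrt{1+2(\gamma/\lambda(\varepsilon))^2}$ independently of $h$), so there is no need for a G\aa rding-plus-Schatz argument as in Theorem \ref{Mc}; C\'ea's lemma applies directly. First I would observe that, by Theorem~\ref{Galerkin-theorem}, the RCWA approximation $u^{h,M}\in V_M$ is the Galerkin projection of $u^h$ against the space $V_M=H^1(-H,H)\otimes E_M$, so Galerkin orthogonality $B_{\varepsilon_h}(u^h-u^{h,M},v_M)=0$ holds for every $v_M\in V_M$. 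Combined with uniform coercivity and the continuity of $B_{\varepsilon_h}$ (whose continuity constant $\gamma$ is uniform in $h$ since $\|\varepsilon_h^{-1}\|_{L^\infty}\le\|\varepsilon^{-1}\|_{L^\infty}$), C\'ea's lemma yields
\begin{equation*}
\normH{u^h-u^{h,M}}{\Omega}\le C\,\inf_{v_M\in V_M}\normH{u^h-v_M}{\Omega}.
\end{equation*}

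Next I would take $v_M=\mathcal{F}_M u^h$, the truncation of the Fourier expansion of $u^h$ in $x_1$ to modes $|n|\le M$. By Theorem~\ref{aest} (or rather its direct analog in the dissipative setting, obtained by repeating the regularity argument with the complex $\varepsilon$: the proof uses only piecewise $C^2$ regularity and the \emph{a priori} bound \eqref{Lax2}, both of which hold here), $u^h\in H^{1+s_2}(\Omega)$. Standard Fourier-truncation estimates give
\begin{equation*}
\normH{u^h-\mathcal{F}_M u^h}{\Omega}\le C\,M^{-s_2}\normblank{u^h}{H^{1+s_2}(\Omega)},
\end{equation*}
which establishes the $s=1$ case with the desired rate $M^{-s_2}=M^{(1-2)s_2}$.

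For the $s=0$ case, I would run an Aubin--Nitsche duality argument exactly as in \eqref{Nitsche}. Given $F\in L^2_{qp}(\Omega)$, let $z_F^h\in H^1_{qp}(\Omega)$ solve the adjoint problem $\overline{B_{\varepsilon_h}(\xi,z_F^h)}=-\int_\Omega F\overline{\xi}$ for all $\xi\in H^1_{qp}(\Omega)$; coercivity guarantees existence, uniqueness, and the bound $\normblank{z_F^h}{H^{1+s_2}(\Omega)}\le C\normL{F}{\Omega}$ via the analog of Theorem~\ref{aest} for the adjoint equation (its coefficient $\overline{1/\varepsilon_h}$ is again piecewise $C^2$ with the same ellipticity properties). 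Setting $\xi=u^h-u^{h,M}$, using Galerkin orthogonality to subtract an arbitrary $v_M\in V_M$ from $z_F^h$, and then taking $v_M=\mathcal{F}_M z_F^h$, I obtain
\begin{equation*}
\normL{u^h-u^{h,M}}{\Omega}\le \gamma\normH{u^h-u^{h,M}}{\Omega}\cdot C M^{-s_2}\le CM^{-2s_2},
\end{equation*}
which is the $s=0$ case $M^{(0-2)s_2}=M^{-2s_2}$.

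The only real obstacle I foresee is verifying carefully that the regularity result of Theorem~\ref{aest}, proved for the real-$\varepsilon$ case from the non-trapping \emph{a priori} estimate of Theorem~\ref{duals}, carries over to the complex case. This should follow by reading its proof verbatim and replacing every use of the non-trapping \emph{a priori} bound by the coercivity estimate \eqref{Lax2}, since the elliptic-regularity input from Ref.~\cite{Bernardi} requires only that the coefficient $1/\varepsilon$ be piecewise $C^2$ and bounded away from singular values, which is assured by $\Im(\varepsilon)>c_1>0$ and $\Re(\varepsilon)>c_2>0$. With this regularity in hand (with the constant depending on $\varepsilon$ but not on $h$, via the uniform-in-$h$ bounds $\lambda(\varepsilon_h)^{-1}\le\lambda(\varepsilon)^{-1}$, $\sigma(\varepsilon_h)^{-1}\le\sigma(\varepsilon)^{-1}$), everything else is routine and the stated estimate follows.
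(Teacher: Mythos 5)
Your proposal is correct and follows essentially the same route as the paper: uniform $V_M$-ellipticity from \eqref{unib}, quasi-optimality (your C\'ea's lemma is the paper's Strang first lemma with vanishing consistency term, since both problems share the form $B_{\varepsilon_h}$ and, for the total fields, the same right-hand side), the Fourier truncation bound $\normH{u^h-\mathcal{F}_M u^h}{\Omega}\le CM^{-s_2}\normblank{u^h}{H^{1+s_2}(\Omega)}$ with regularity supplied by the dissipative analog of Corollary~\ref{ah} (the paper's $C_3$, built from \eqref{Lax2} in place of $C_1$), and the Aubin--Nitsche trick for the $L^2$ rate. The "obstacle" you flag about transferring Theorem~\ref{aest} to complex $\varepsilon$ is resolved exactly as you suggest in the paper's own argument.
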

 \begin{proof}
By virtue of \eqref{unib}, the family of sesquilinear forms $\big(B_{\varepsilon_{h}} (\cdot,\cdot)\big)_{h>0}$ is uniformly $V_{M}$-elliptic. The sesquilinear form is the same for both problems, and if we consider the total fields, the variational problems have the same right  hand side.  Strang's first lemma \cite{BrennerScott} yields a constant $c>0$ independent of $h$ and $M$ such that
\begin{align}
\nonumber
\normH{u^{h,t}-u^{h,M,t}}{\Omega}&\leq c \bigg(\inf_{v_{M}\in V_{M}} \normH{u^{h,t}-v_{M}}{\Omega}\bigg)\\
\nonumber
&\leq c \bigg(\normH{u^{h,t}-\mathcal{F}_{M}u^{h,t}}{\Omega}\bigg)\\
\nonumber
&\leq c \bigg(M^{-s_{2}}\normblank{u^{h,t}}{H^{1+s_{2}}(\Omega)}\bigg)\\
&\leq c C_{3}(\kappa,\varepsilon)\normL{\big(\varepsilon^{-1}_{\pm}\Delta+\kappa^{2}\big)(\chi u^{\rm inc})}{\Omega_{\delta}} M^{-s_{2}}.
\end{align}
 Here, $C_{3}(\kappa,\varepsilon)$ is the same as $C_{2}(\kappa,\varepsilon)$ but with the continuity constant in \eqref{Lax2} instead of $C_{1}(\kappa,\varepsilon)$.  We have used standard properties of Fourier series \cite{Walker} in the third line, and the last line follows from Corollary \ref{ah}. The extra order of convergence in the $L^{2}$ norm follows from the Aubin--Nitsche trick \cite{Hsiao}.
 \qed
 \end{proof}
 We now summarize the convergence results of this paper:
 \begin{theorem}
 \label{summary}
 Suppose the conditions of Theorem \ref{slice} or Theorem \ref{hconv1} are met, $u \in H_{qp}^{1}(\Omega)$ is the solution to problem \eqref{p1}, and $u^{h,M}$ is the RCWA solution. For $M$ large enough, there is a constant $C>0$ independent of $h$ and $M$ such that
 \begin{equation}
 \normblank{u-u^{h,M}}{s}\leq C \bigg(h^{s_{1}/2}+M^{(s-2)s_{2}} \bigg),
 \end{equation}
 with $s\in\left\{0,1\right\}$, where $s_{1}$ is related to the regularity of $u$ and $s_{2}$ is related to the regularity of $u^{h}$.
 \end{theorem}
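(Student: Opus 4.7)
The plan is to insert the intermediate slice-averaged approximation $u^h$ and split the error by the triangle inequality
\begin{equation*}
\normblank{u - u^{h,M}}{s} \leq \normblank{u - u^h}{s} + \normblank{u^h - u^{h,M}}{s},
\end{equation*}
so that the two terms can be bounded independently by the convergence theorems already proved. This is the only natural decomposition available, since the exact solution cannot be compared directly to its Galerkin truncation without introducing the coefficient perturbation, nor can the perturbed solution be compared to the RCWA output without invoking the Fourier truncation.

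First I would invoke Theorem~\ref{slice} in Case~I, or Theorem~\ref{hconv1} in Case~II, to control the first term; both supply the bound $\normH{u - u^h}{\Omega} \leq C h^{s_1/2}$ with $C$ independent of $h$. Since the $L^{2}(\Omega)$ norm is dominated by the $H^{1}(\Omega)$ norm, this same estimate covers both $s=0$ and $s=1$. Next I would apply Theorem~\ref{Mc} in Case~I, or Theorem~\ref{c2} in Case~II, to the second term, yielding $\normblank{u^h - u^{h,M}}{s} \leq C M^{(s-2)s_2}$ once $M$ is taken large enough, with $C$ independent of both $h$ and $M$. Adding the two estimates and relabelling constants then gives the advertised bound.

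The step requiring the most care, and really the only obstacle beyond book-keeping, is confirming that the constant in the second bound is genuinely uniform in $h$ and not merely in $M$. This is where the explicit dependence of $C_{1}(\kappa,\varepsilon_h)$ on $\varepsilon_h$, together with the monotonicity $C_{1}(\kappa,\varepsilon_h) \leq C_{1}(\kappa,\varepsilon)$ recorded in Remark~\ref{remark}, becomes essential: it allows the downstream constants $C_{2}(\kappa,\varepsilon_h)$ and $C_{3}(\kappa,\varepsilon_h)$ appearing inside the proofs of Theorems~\ref{Mc} and \ref{c2} to be bounded by the analogous quantities formed from $\varepsilon$ itself. The regularity indices $s_{1}$ and $s_{2}$ both lie in $(0,1/2)$ and come from the piecewise-$C^{2}$ regularity of $\varepsilon$ and $\varepsilon_h$; by the same stairstep/monotonicity argument used to transfer the non-trapping condition from $\varepsilon$ to $\varepsilon_h$ in Theorem~\ref{slice}, the index $s_{2}$ for $u^h$ can also be chosen uniformly in $h$. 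With these uniformities secured, the two bounds simply add and the proof is complete.
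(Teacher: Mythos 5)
Your proof is correct and follows exactly the paper's own argument: the triangle inequality through the intermediate solution $u^h$, with Theorems~\ref{slice} and \ref{Mc} (Case~I) or Theorems~\ref{hconv1} and \ref{c2} (Case~II) supplying the two pieces. Your additional remarks on the $h$-uniformity of the constants via $C_{1}(\kappa,\varepsilon_h)\leq C_{1}(\kappa,\varepsilon)$ are a sensible elaboration of what the paper leaves implicit.
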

 \begin{proof}
 This follows from Theorems \ref{slice} and \ref{Mc} or Theorems \ref{hconv1} and \ref{c2}, and the triangle inequality.
 \qed
 \end{proof}

\section{Numerical examples}
\label{numerics}
In this section, for two different gratings, we compute the convergence rate of the   RCWA solution by comparing the RCWA solution with the solution obtained
using the Finite Element Method with a highly refined grid, since an analytical solution is not possible to obtain. Both gratings  have a period $\Lx=500$~nm and a triangular profile of base $Lx/2$.
 The first grating, shown in Fig.~\ref{illustration}(a),  has a symmetric triangular profile of height $100$~nm and sits atop a 100-nm-thick strip.  The second grating, shown in Fig.~\ref{illustration}(b),  has an asymmetric triangular profile of height $50$~nm with its vertex shifted off-center by $\Lx/8$
 and sits atop a 50-nm-thick strip. The grating and the underlying strip are made
 of either a metal with relative permittivity $\varepsilon_{m}=-15+4i$ or a dissipative material with 
 relative permittivity $\varepsilon_{d}=15+4i$.
 The domain height $2H =1700$~nm in Fig.~\ref{illustration}(a) but $2H =1600$~nm in Fig.~\ref{illustration}(b). Whatever portion of $\Omega$ is not occupied by metal or dissipative material is filled with air with relative permittivity $\varepsilon_{a}=1+10^{-6}i$. The small imaginary part $\Im(\varepsilon_{a})=10^{-6}$ was used for the sake of numerical stability  of the RCWA algorithm  \cite{Faryad}.  Calculations were made for normal incidence (i.e., $\theta=0$) at free-space wavenumber $\lambda_0 = 2\pi/\kappa=600$~nm.

\begin{figure}%
    \centering
    \subfloat[]{{\includegraphics[width=7cm]{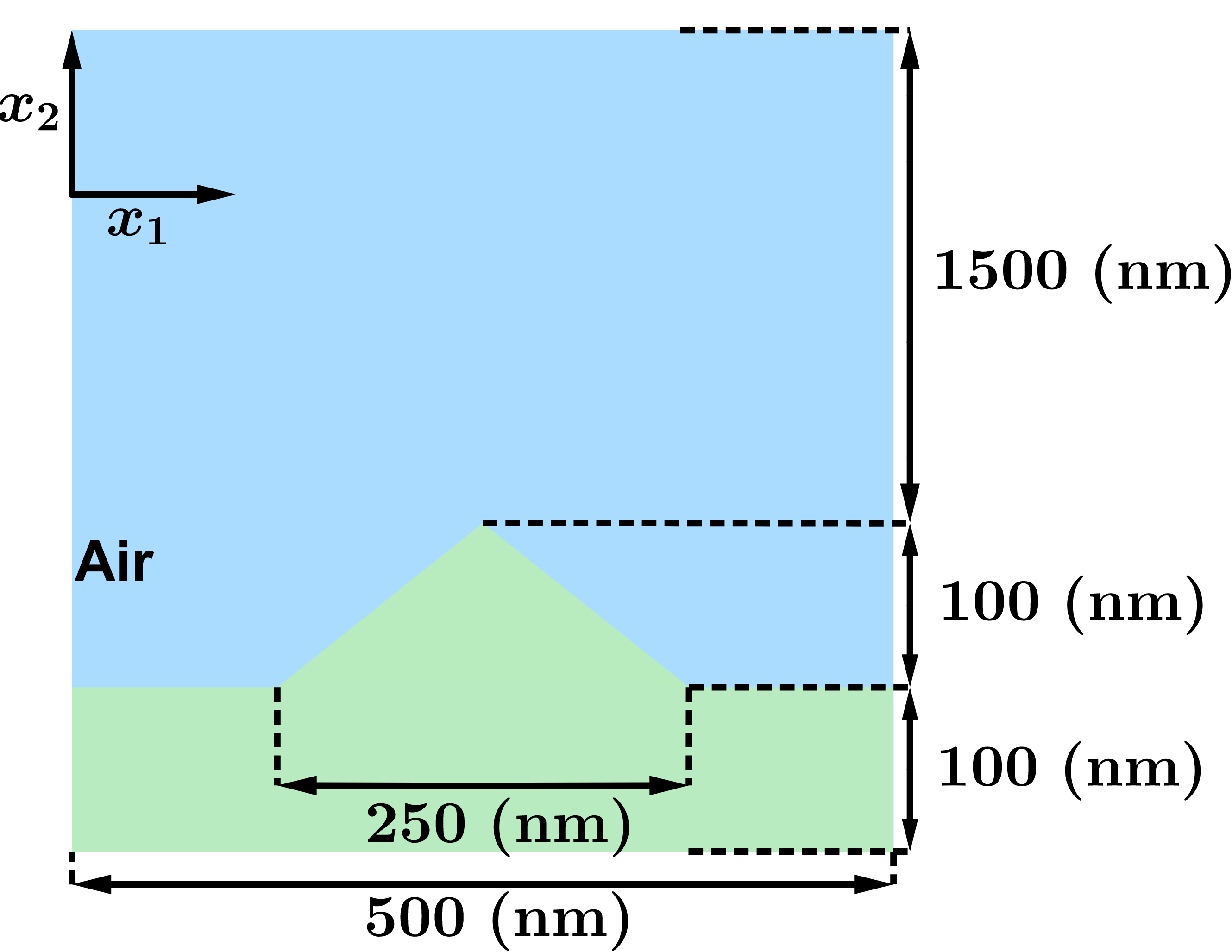} }}%
    \qquad
    \subfloat[]{{\includegraphics[width=7cm]{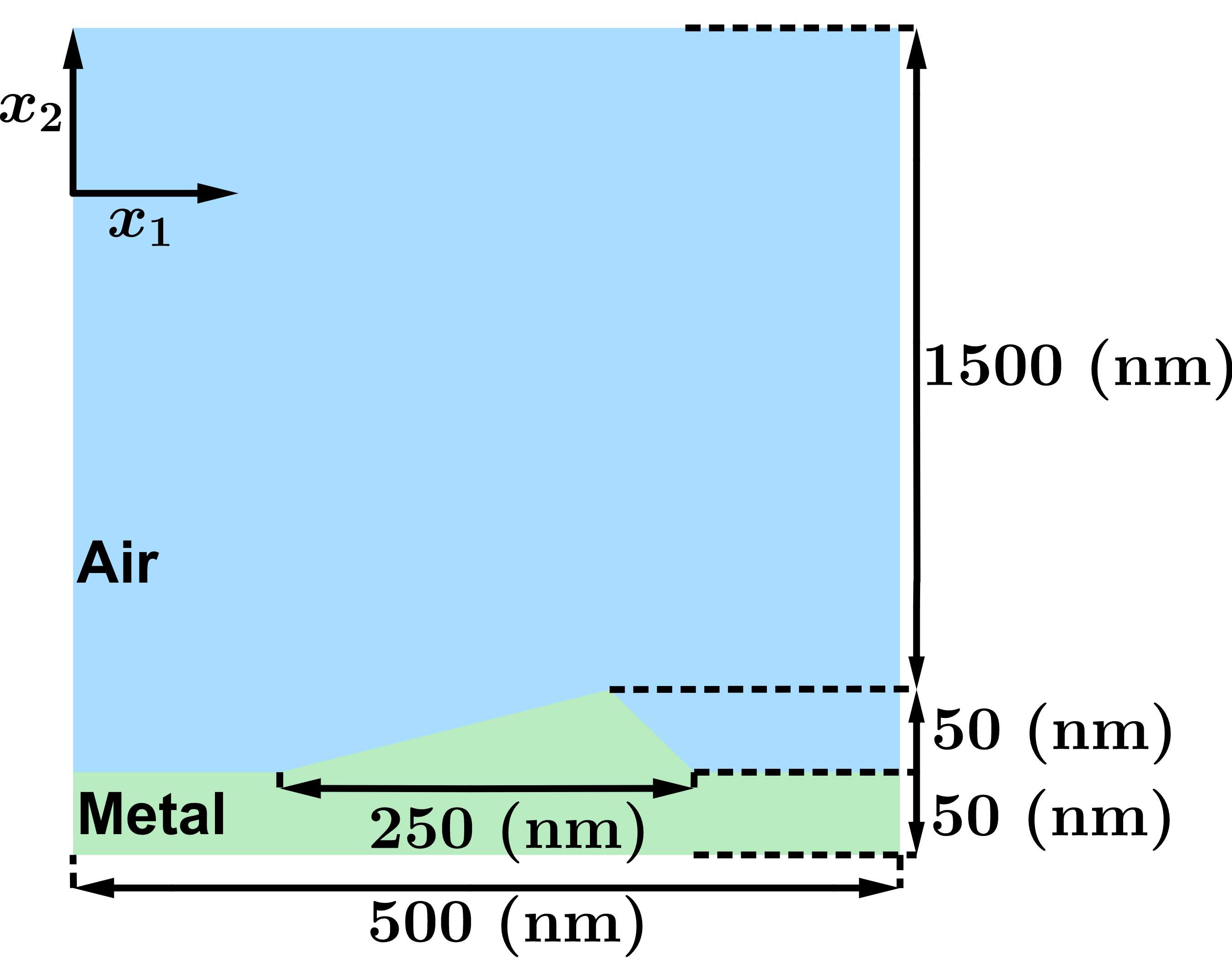} }}%
    \caption{
 (a) Symmetric grating of maximum height  $100$~nm.
 (b) Asymmetric grating of maximum height 50~nm. 
 The peak of the asymmetric grating is off center to the right by $62.5$ nm. The thickness of the air layer is not to scale.
 }%
    \label{illustration}
\end{figure}

\begin{figure}[h]%
    \centering
    \subfloat[]{{\includegraphics[width=7.8cm]{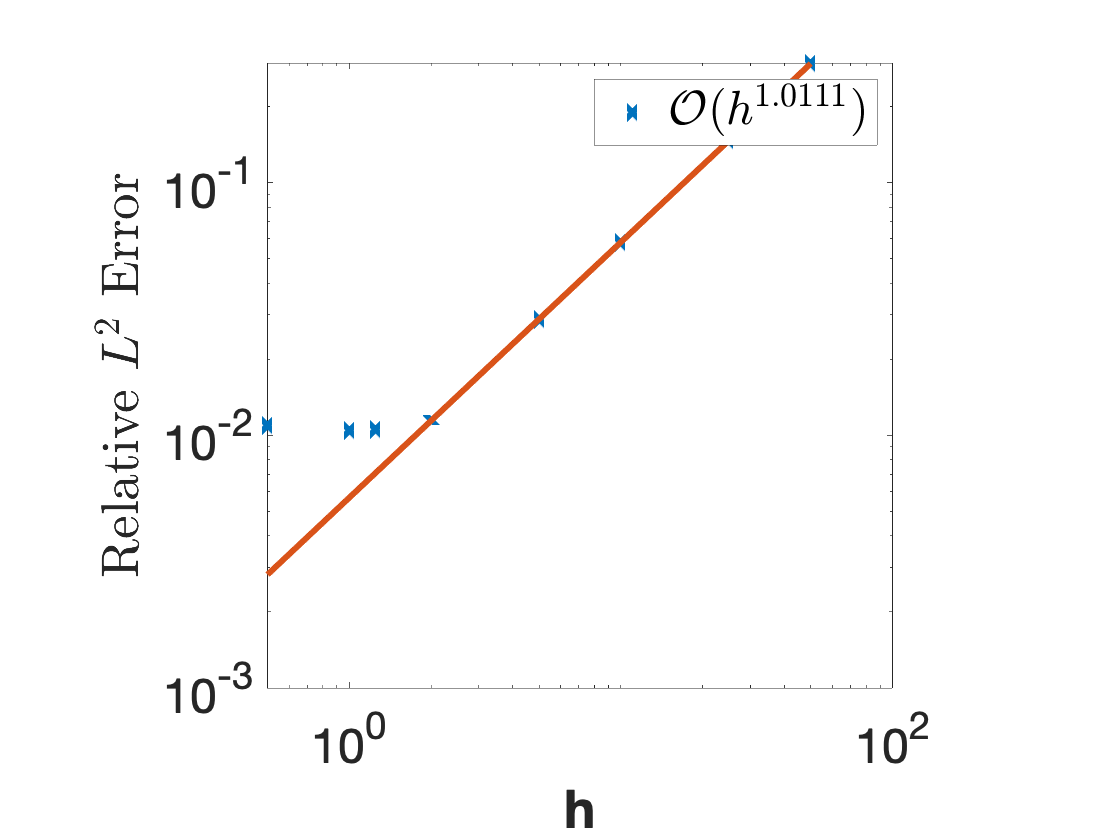} }\label{fig:convergence2:a}}%
    \subfloat[]{{\includegraphics[width=7.8cm]{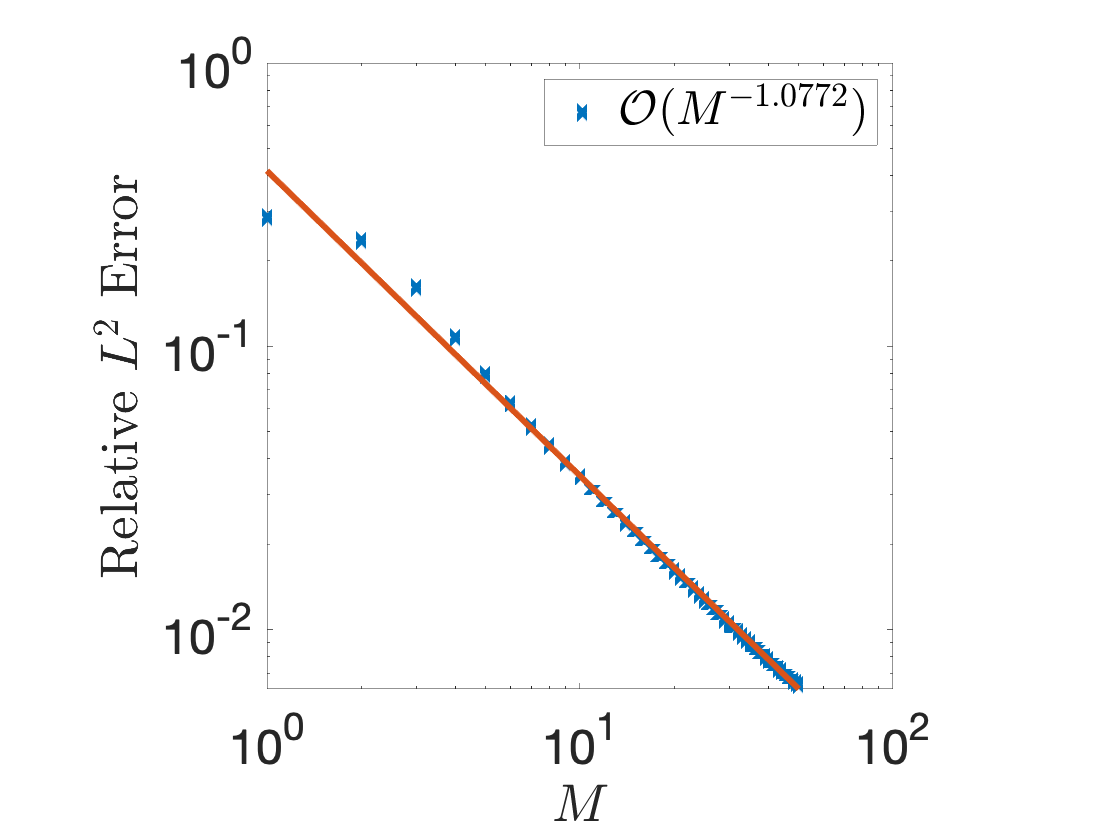} }\label{fig:convergence2:b}}%
    \caption{ Convergence plots comparing the relative $L^{2}$ error between the RCWA and FEM solutions for 
        the  symmetric grating of Fig.~\ref{illustration}(a).
    Whereas $h \in \{1/2,1,1.25,2,5,10,25,50 \}$~nm but   $M=30$ in (a),
    $h=1$~nm but $M=[1,50]$    in (b).  The grating and underlying strip are made of a dissipative material with relative permittivity   $\varepsilon_{d}=15+4i$. In all plots the error saturates below $10^{-2}$. For the least-squares-fit lines,    data in the convergent regime only where used.}%
    \label{fig:convergence2}%
\end{figure}


The FEM solution $u_{FE}$  was computed using an adaptive method in NGSolve version 6.2.1908 \cite{NGSolve}. The domain $\Omega$ was taken to be sandwiched between two perfectly matched layers (PMLs) of thickness equal to $\lambda_0$ and PML parameter equal to $1.5+2.5i$. The FEM solutions were computed using $4$th-degree continuous finite elements, with the mesh adaptivity terminating when the algorithm would reach 100,000 degrees of freedom. The relative $L^{2}$ error between the RCWA  and   FEM solutions was calculated as 
 \begin{equation}
 \frac{\normL{u^{h,M}-u_{FE}}{\Omega}}{\normL{u_{FE}}{\Omega}}.
\end{equation} 

The convergence results in Figs.~\ref{fig:convergence2}(a) and \ref{fig:convergence2}(b) are for the symmetric case of Fig.~\ref{illustration}(a) with $\varepsilon_{d}=15+4i$. Since $\Re(\varepsilon)>0$ and $\Im(\varepsilon)>0$, this case is covered by Theorem \ref{summary}.  Again, as our analysis predicted the convergence rate with respect to $M$ to be $\mathcal{O}(M^{-2s_{2}})$ with $s_2\in(0,1/2)$. In fact, we know that $u^{h} \in H^{1+s_{2}}$ for every $s_{2} \in (0,1/2)$ which follows from Corollary 3.2 of Ref.~\cite{elschner}.  The numerical results yielding $\mathcal{O}(M^{-1.0772})$  match the prediction. 

Theorem \ref{summary} predicts the convergence rate with respect to $h$ to be $\mathcal{O}(h^{s_{1}/2})$. In practice, however, we see faster convergence than predicted by our analysis. This was also observed for s-polarized light \cite{CivilettiMonk}. It would be desirable to improve the predicted convergence rates with respect to $h$ to closely match the higher rates seen in our numerical results. The RCWA converges in a stable way in this case, as the error data closely falls on the least-squares-fit line. 

Figs.~\ref{fig:convergence}(a) and \ref{fig:convergence}(c) show the convergence of the RCWA solution for the symmetric grating with respect to $M$ and $h$, respectively, when the grating and the underlying strip are made of a metal with relative permittivity of $\varepsilon_{m}=-15+4i$. We observe order $\mathcal{O}(M^{-1.028})$ and $\mathcal{O}(h^{0.73068})$. Figs.~\ref{fig:convergence}(b) and \ref{fig:convergence}(d) show the convergences
of the RCWA solution of the asymmetric grating as being order $\mathcal{O}(M^{-0.91695})$ and $\mathcal{O}(h^{0.92497})$, respectively. Although our analysis in the previous sections did not cover the case where $\Re(\varepsilon)<0$,  we included these numerical results because metal gratings are a common application for RCWA.

\begin{figure}[h]%
    \centering
    \subfloat[]{{\includegraphics[width=7.8cm]{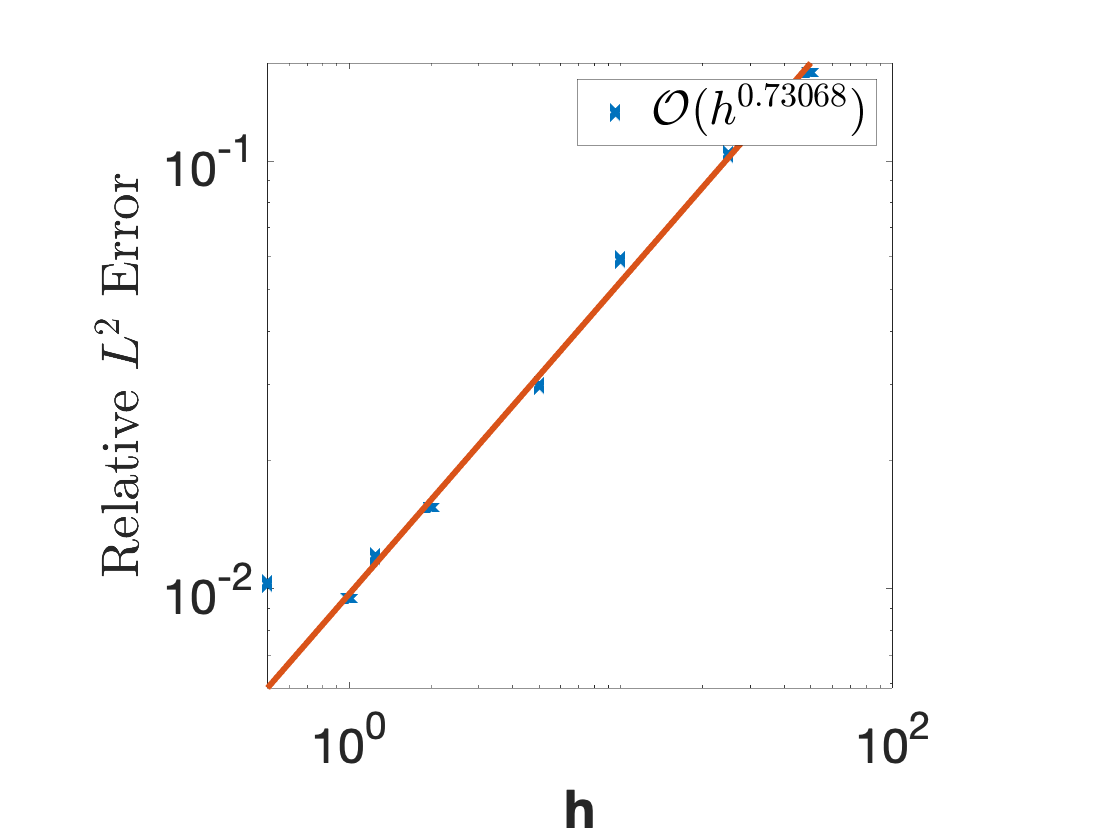} }\label{fig:convergence:a}}%
    \subfloat[]{{\includegraphics[width=7.8cm]{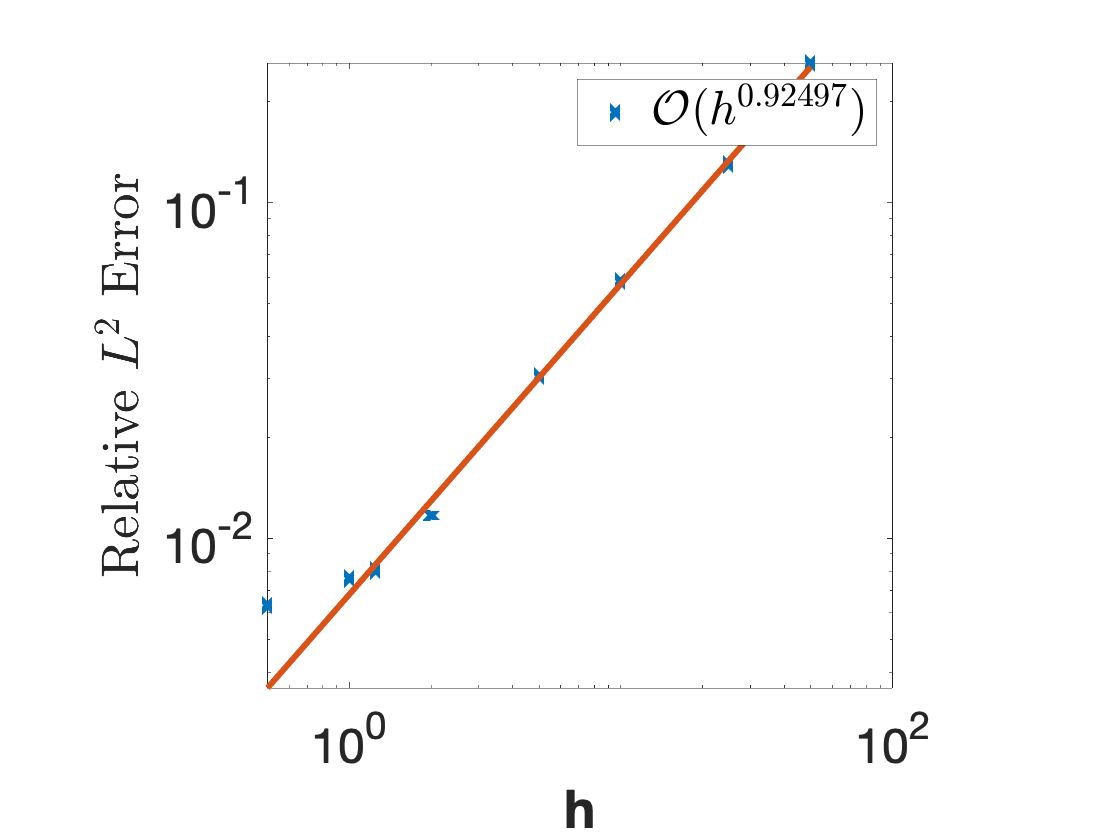} }\label{fig:convergence:b}}%
    \quad
    \subfloat[]{{\includegraphics[width=7.8cm]{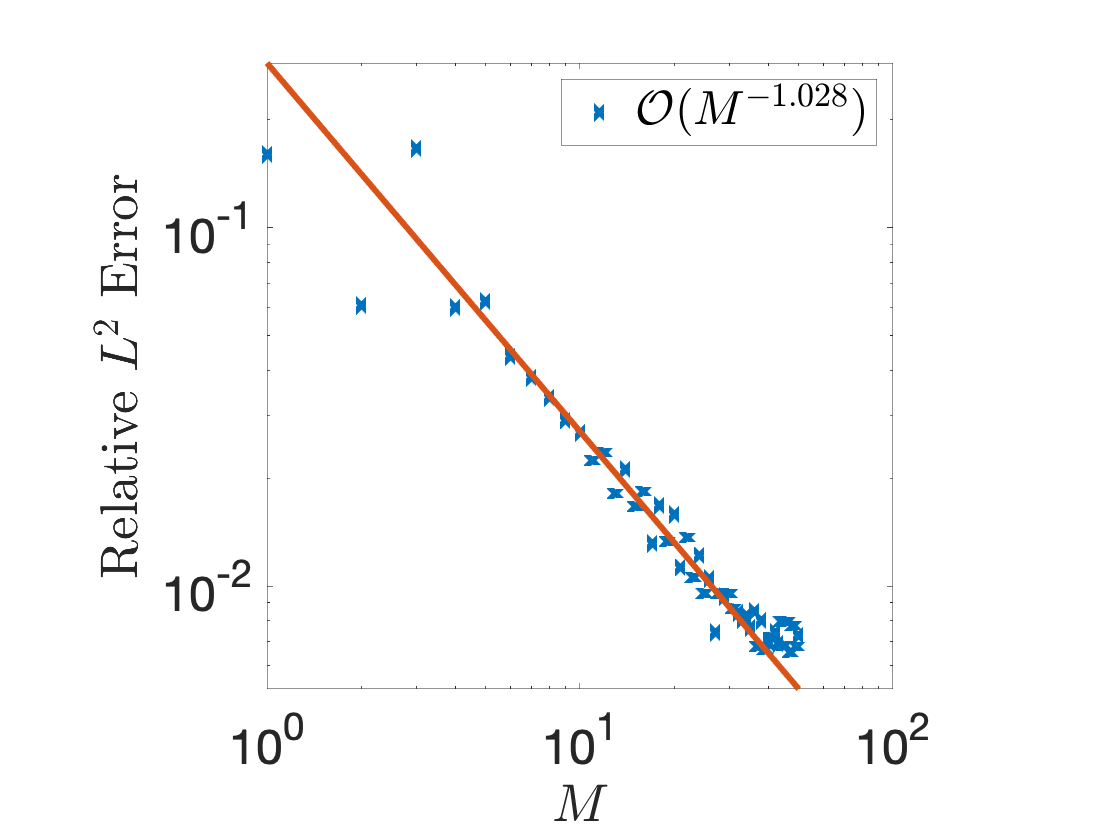} }\label{fig:convergence:c}}%
    \subfloat[]{{\includegraphics[width=7.8cm]{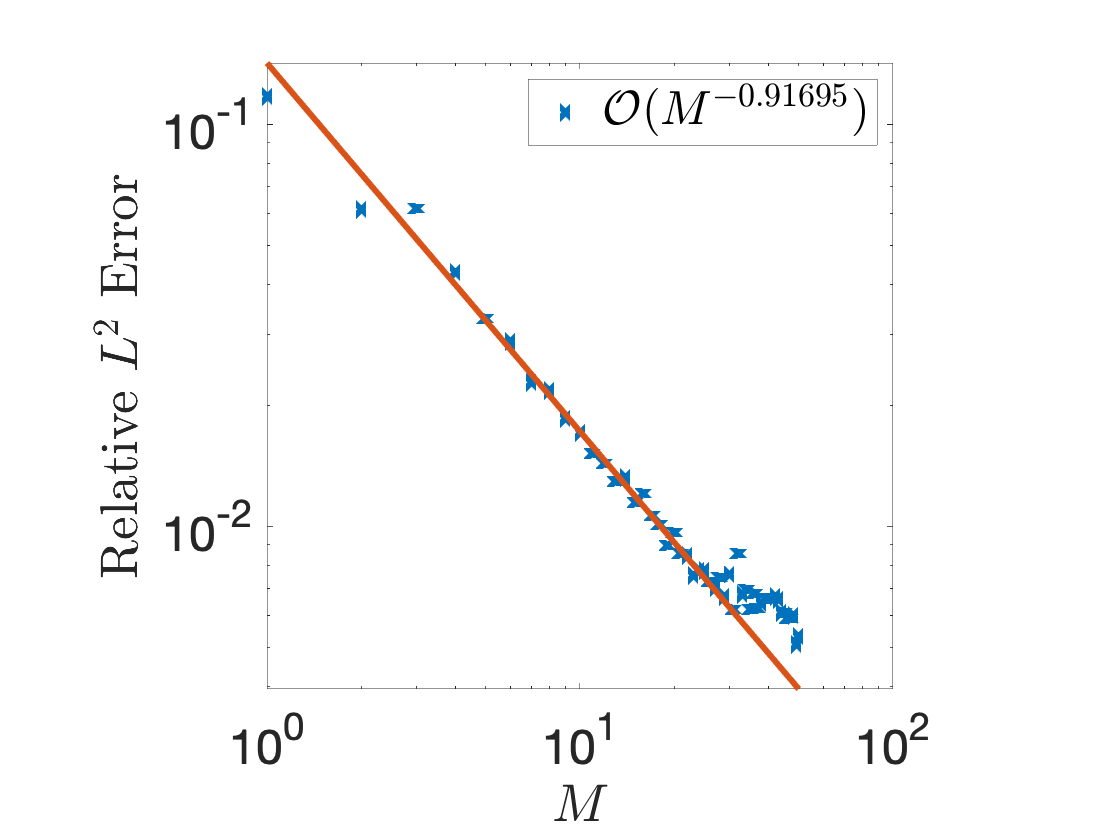} }\label{fig:convergence:d}}%
    \caption{ Convergence plots comparing the relative $L^{2}$ error between the RCWA and FEM solutions for 
       (a,c) the  symmetric grating of Fig.~\ref{illustration}(a) and 
    (b,d) the asymmetric grating of Fig.~\ref{illustration}(b).
    Whereas $h \in \{1/2,1,1.25,2,5,10,25,50 \}$~nm but   $M=30$ in (a) and (b),
    $h=1$~nm but $M=[1,50]$    in (c) and (d).  The grating and underlying strip are made of a metal with relative permittivity   $\varepsilon_{m}=-15+4i$. In all plots the error saturates below $10^{-2}$. For the least-squares-fit lines,    data in the convergent regime only where used.}%
    \label{fig:convergence}%
\end{figure}


\section{Conclusion}
The convergence properties of the two-dimensional RCWA   for $p$-polarized   light was studied for domains with piecewise smooth relative permittivity that is either
\begin{itemize}
\item[I.] purely real and positive, or
\item[II.] complex with both real and imaginary parts positive.
\end{itemize}
Since the RCWA approximates   the solution using slices of thickness $h$ and a Fourier truncation parameter $M$, we provided theorems about the convergence rates with
respect to both $h$ and $M$. We showed that the RCWA is a Galerkin scheme for $p$-polarized
light, which allowed investigation using FEM techniques.  If the relative permittivity is purely real, we proved a Rellich identity for non-trapping domains. Our theory predicts convergence even for trapping domains, as long as the continuity constant \eqref{constant} remains bounded independently of $h$. In the case where $\Im(\varepsilon)>c_{1}>0$ and $\Re(\varepsilon)>c_{2}>0$, we proved convergence using  Strang lemmas.

Our study has two limitations which need to be overcome in future work.  We need to extend
our analysis to the case where $\Re(\epsilon)<0$ in some parts of the domain.  This would allow us to assert convergence for metallic scatterers as seen in Fig.~\ref{illustration}.  Secondly we need to allow for absorption in only some parts of the domain when $\Re(\epsilon)>0$.

\newpage
\appendix
\section{Appendix}
\subsection{Proof of Theorem 1}
 
Since $\varepsilon \in C^{\infty}(\mathbb{R}^{2})$,
 $u \in H^{2}(\Omega)$.  Using the identity 
 \begin{equation}
 2 \Re \bigg(\frac{\partial u}{\partial x_{2}} \overline{u} \bigg)=\frac{\partial}{\partial x_{2}} \big|u \big|^{2},
 \end{equation}
 and the Helmholtz equation $\nabla \cdot \bigg(\frac{1}{\varepsilon}\nabla \overline{u} \bigg)=\overline{F}-\kappa^{2}\overline{u}$, we obtain
\begin{align}
\label{parts}
2 \Re \int_{\Omega}(x_{2}+H)\frac{\partial u}{\partial x_{2}}\nabla \cdot \bigg(\frac{1}{\varepsilon}\nabla \overline{u}\bigg)&=\int_{\Omega}(x_{2}+H) 2 \Re \bigg( \frac{\partial u}{\partial x_{2}}\overline{F}\bigg)-\kappa^{2}\int_{\Omega}(x_{2}+H)\frac{\partial}{\partial x_{2}} \big|u \big|^{2}.
\end{align}
Integrating the last term in \eqref{parts} by parts, we have
\begin{align}
\nonumber
& 2 \Re \int_{\Omega}(x_{2}+H)\frac{\partial u}{\partial x_{2}}\nabla \cdot \bigg(\frac{1}{\varepsilon}\nabla \overline{u}\bigg) =\int_{\Omega}(x_{2}+H) 2 \Re \bigg( \frac{\partial u}{\partial x_{2}}\overline{F}\bigg)\\
\vspace{5pt}
&\qquad-2H \int_{\Gamma_{H}}\kappa^{2} \big|u\big|^{2}+\kappa^{2} \int_{\Omega} \big| u \big|^{2}.
\label{E1}
\end{align}
Using the divergence theorem, we obtain
\begin{align}
\nonumber
&\int_{\Omega}(x_{2}+H)\frac{\partial u}{\partial x_{2}}\nabla \cdot \bigg(\frac{1}{\varepsilon}\nabla \overline{u}\bigg) = -\int_{\Omega} \bigg[\frac{1}{\varepsilon}\bigg|\frac{\partial u}{\partial x_{2}} \bigg|^{2}+(x_{2}+H) \frac{1}{\varepsilon}\nabla \bigg(\frac{\partial u}{\partial x_{2}}\bigg) \cdot \overline{\nabla u} \bigg]\\
\vspace{5pt}
&+2H \int_{\Gamma_{H}}\frac{1}{\varepsilon}\bigg|\frac{\partial u}{\partial x_{2}} \bigg|^{2}+\int_{\Gamma_{R}}(x_{2}+H) \frac{1}{\varepsilon}\frac{\partial u}{\partial x_{2}}\overline{\frac{\partial u}{\partial x_{1}}}
-\int_{\Gamma_{L}}(x_{2}+H) \frac{1}{\varepsilon}\frac{\partial u}{\partial x_{2}}\overline{\frac{\partial u}{\partial x_{1}}}. 
\label{E2}
\end{align}
Using the quasi-periodicity of the solution, we see that 
\begin{equation*}
\int_{\Gamma_{R}}(x_{2}+H) \frac{1}{\varepsilon}\frac{\partial u}{\partial x_{2}}\overline{\frac{\partial u}{\partial x_{1}}}
-\int_{\Gamma_{L}}(x_{2}+H) \frac{1}{\varepsilon}\frac{\partial u}{\partial x_{2}}\overline{\frac{\partial u}{\partial x_{1}}}=0.
\end{equation*}
We take twice the real part of \eqref{E2}, use the identity $2\Re \bigg[\nabla \bigg(\frac{\partial u}{\partial x_{2}} \bigg)\cdot \overline{\nabla u} \bigg]=\frac{\partial}{\partial x_{2}}\big|\nabla u \big|^{2}$ therein, and integrate by parts. Then using quasi-periodicity again,  we obtain
\begin{equation}
\int_{\Gamma_{L}}(x_{2}+H)\frac{1}{\varepsilon}\big| \nabla u\big|^{2}-\int_{\Gamma_{R}}(x_{2}+H)\frac{1}{\varepsilon}\big| \nabla u\big|^{2}=0.
\end{equation}
Hence, \eqref{parts} can be rewritten as follows:
\begin{align}
\nonumber
2 \Re \int_{\Omega}(x_{2}+H)\frac{\partial u}{\partial x_{2}}\nabla \cdot \bigg(\frac{1}{\varepsilon}\nabla \overline{u}\bigg) &= -\int_{\Omega} \bigg[\frac{2}{\varepsilon}\bigg|\frac{\partial u}{\partial x_{2}} \bigg|^{2}-(x_{2}+H)\frac{\partial}{\partial x_{2}}\bigg(\frac{1}{\varepsilon} \bigg)\big| \nabla u\big|^{2} \bigg]\\
\vspace{5pt}
&+2H \int_{\Gamma_{H}}\bigg(\frac{2}{\varepsilon}\bigg|\frac{\partial u}{\partial x_{2}} \bigg|^{2}-\frac{1}{\varepsilon}\big|\nabla u \big|^{2} \bigg)+\int_{\Omega}\frac{1}{\varepsilon}\big|\nabla u \big|^{2}. \label{E3}
\end{align}

To complete the proof, we equate the right sides of \eqref{E1} and \eqref{E3}. The last term on the right  side of \eqref{E3} is replaced by the identity 
\begin{equation}
\label{AL-eq134}
\int_{\Omega}\frac{1}{\varepsilon}\big|\nabla u \big|^{2}=\kappa^{2}\int_{\Omega}\big|u\big|^{2}+\int_{\Gamma_{H}}\overline{u}T^{+}(u)+\int_{\Gamma_{-H}}\overline{u}T^{-}(u)-\int_{\Omega}F \overline{u},
\end{equation}
which can be obtained by setting $v=u$ in the variational problem \eqref{p1}. After rearranging some terms, the Rellich identity is shown. 

\qed

\end{document}